\newtheorem{lemma}{Lemma}[section]
\newtheorem{theorem}[lemma]{Theorem}
\newtheorem{proposition}[lemma]{Proposition}
\newtheorem{corollary}[lemma]{Corollary}
\theoremstyle{definition} 
\newtheorem{definitionnodiamond}[lemma]{Definition}
\newtheorem{examplenodiamond}[lemma]{Example}
\newtheorem{remarknodiamond}[lemma]{Remark}
\newenvironment{definition}{\begin{definitionnodiamond}}{\hfill\ensuremath\blacklozenge\end{definitionnodiamond}}
\newenvironment{example}{\begin{examplenodiamond}}{\hfill\ensuremath\blacklozenge\end{examplenodiamond}}
\newenvironment{remark}{\begin{remarknodiamond}}{\hfill\ensuremath\blacklozenge\end{remarknodiamond}}
\let\xx@thm\@thm
\renewcommand\qedhere{\qed}
\numberwithin{equation}{section}
\crefname{section}{Section}{Sections}
\crefname{subsection}{}{Subsections}
\crefname{definition}{Definition}{Definitions}
\crefname{definitionnodiamond}{Definition}{Definitions}
\crefname{example}{Example}{Examples}
\crefname{examplenodiamond}{Example}{Examples}
\crefname{remark}{Remark}{Remarks}
\crefname{remarknodiamond}{Remark}{Remarks}
\crefname{convention}{Convention}{Conventions}
\crefname{lemma}{Lemma}{Lemmas}
\crefname{proposition}{Proposition}{Propositions}
\crefname{corollary}{Corollary}{Corollaries}
\crefname{theorem}{Theorem}{Theorems}
\crefname{assumption}{Assumption}{Assumptions}
\crefname{equation}{}{}
\crefname{align}{}{}
\crefname{proofstep}{Step}{Steps}
\newcommand\cat[1]{\textsc{#1}}
\newcommand\define[1]{\emph{#1}}
\newcommand{\cst}{\ifmmode\mathrm{C}^*\else{$\mathrm{C}^*$}\fi}
\newcommand{\wst}{\ifmmode\mathrm{C}^*\else{$\mathrm{W}^*$}\fi}
\newcommand{\id}{\mathrm{id}}
\newcommand{\ad}{Ad}
\newcommand{\cad}{ad}
\newcommand{\I}{\mathds{1}}
\newcommand{\GG}{\mathbb{G}}
\newcommand{\op}{{\scriptscriptstyle\mathrm{op}}}
\newcommand{\sA}{\mathsf{A}}
\newcommand{\sB}{\mathsf{B}}
\newcommand{\sC}{\mathsf{C}}
\newcommand{\sD}{\mathsf{D}}
\newcommand{\sE}{\mathsf{E}}
\newcommand{\sM}{\mathsf{M}}
\newcommand{\sN}{\mathsf{N}}
\newcommand{\sH}{\mathsf{H}}
\newcommand{\sG}{\mathsf{G}}
\newcommand{\sU}{\mathsf{U}}
\newcommand{\sV}{\mathsf{V}}
\newcommand{\sW}{\mathsf{W}}
\newcommand{\sX}{\mathsf{X}}
\newcommand{\sk}{\mathsf{k}}
\newcommand{\fg}{\mathfrak{g}}
\newcommand\bC{\mathbb C}
\newcommand\bG{\mathbb G}
\newcommand\bZ{\mathbb Z}
\newcommand\cC{\mathcal C}
\newcommand\cD{\mathcal D}
\newcommand\cH{\mathcal H}
\newcommand\cI{\mathcal I}
\newcommand\cM{\mathcal M}
\newcommand\cS{\mathcal S}
\newcommand\cZ{\mathcal Z}
\newcommand\G{\Gamma}
\DeclareMathOperator{\cz}{\mathcal{CZ}}
\DeclareMathOperator{\hz}{\mathcal{HZ}}
\DeclareMathOperator{\hc}{\mathcal{HC}}
\DeclareMathOperator{\hgc}{\mathcal{HGC}}
\DeclareMathOperator{\End}{End}
\numberwithin{equation}{section}
\begin{document}
\author{Alexandru Chirvasitu}
\address{Department of Mathematics, University of Washington, Seattle, WA, USA}
\email{chirva@uw.edu}
\author{Pawe{\l} Kasprzak}
\address{Department of Mathematical Methods in Physics, Faculty of Physics, University of Warsaw, Poland}
\email{pawel.kasprzak@fuw.edu.pl}

\thanks{PK was partially supported by the Harmonia NCN grant
2012/06/M/ST1/00169.}
\begin{abstract}
The notion of  Hopf  center and Hopf cocenter of a Hopf algebra    is investigated by the extension theory of Hopf algebras. We   prove  that  each of them yields an exact sequence of Hopf algebras.  Moreover the exact sequences are shown to satisfy the faithful (co)flatness condition.  Hopf center and cocenter  are computed for $\sU_q(\fg)$ and the Hopf algebra  $\textrm{Pol}(\GG_q)$, where $\GG_q$ is the Drinfeld-Jimbo quantization of a compact semisimple simply connected Lie group $\bG$ and $\fg$ is a simple complex Lie algebra.  
\end{abstract}
\title{On the Hopf (co)center of a Hopf algebra}
\maketitle

\section*{Introduction} 
A Hopf algebra  is a mathematical  object   possessing a rich inherent symmetry, expressed by the compatible  algebra and coalgebra structures  together with  the antipode, which    flips around these  structures to their opposite counterparts. Hopf algebras  were   discovered in the context of algebraic topology \cite{Hopf} being applied then in the theory of algebraic groups, combinatorics, computer science, Galois theory and knot theory. In the  quantum  group incarnation they appear in noncommutative geometry and free probability. The theory of Hopf algebras is  studied and developed  not only as an effective tool applied elsewhere but also  due to the richness of their abstract theory, see e.g. \cite{rad_book}. 

One of the major factor stimulating   the  development of the Hopf algebra  theory   is its strong link with  group theory. In this paper we shall walk  along the mathematical  path beginning at the very initial level of  group theory,  while  one discusses the notion of a group's center, and   proceeding to the world of Hopf algebras,  getting  to the intriguing  area  where the extension theory of Hopf algebras together with the concept of  faithful  flatness  dominates the landscape. In our journey we shall experience not only the conceptual entertainment but we shall also compute the classes of motivating and instructive examples. 

Let  $\mathsf{G}$ be a group. The center of $\mathsf{G}$  \[\cZ(\mathsf{G}) = \{s\in\mathsf{G}: \forall t\in \mathsf{G}\,\, st = ts\,\}\] is a   subgroup of $\mathsf{G}$ which is normal: for all $t\in \mathsf{G}$ and $s\in \cZ(\mathsf{G}) $ we have $ts t^{-1} =s\in \cZ(\mathsf{G})$. In particular the quotient space $\mathsf{G}/\cZ(\mathsf{G})$ is equipped with the group structure such that the quotient map $\pi:\mathsf{G}\to \mathsf{G}/\cZ(\mathsf{G})$ is a surjective group homomorphism. Then  $\ker\pi = \cZ(\mathsf{G})$ and we have the short exact sequence in the category of groups     \begin{equation}\label{centexseq} \{e\}\rightarrow \cZ(\mathsf{G})\rightarrow \mathsf{G}\rightarrow  \mathsf{G}/\cZ(\mathsf{G}) \rightarrow  \{e\}\end{equation}  

Let $\mathsf{k}$ be a field. A finite group  $\mathsf{G}$ yields a  pair of  Hopf algebra triples 
\begin{itemize}
\item the group algebras triple: $\mathsf{k}[\cZ(\mathsf{G})], \mathsf{k}[\mathsf{G}] , \mathsf{k}[ \mathsf{G}/\cZ(\mathsf{G})]$
\item the $\mathsf{k}$-valued functions triple   $\mathsf{k}(\mathsf{G}/\cZ(\mathsf{G})),  \mathsf{k}(\mathsf{G}),\mathsf{k}(\cZ(\mathsf{G})) $ 
\end{itemize}
which are expected to  be incorporated into the pair of Hopf algebras exact sequences. 
 The concept of  exact sequences in the category of Hopf algebras  was introduced in  \cite{AD}, and indeed applied in our case gives rise to exact sequences 
\begin{equation}\label{exsecl}\begin{split}\mathsf{k}&\rightarrow
 \mathsf{k}[\cZ(\mathsf{G})]\rightarrow \mathsf{k}[\mathsf{G}] \rightarrow \mathsf{k}[ \mathsf{G}/\cZ(\mathsf{G})]\rightarrow\mathsf{k}\\
 \mathsf{k}&\rightarrow
 \mathsf{k}(\mathsf{G}/\cZ(\mathsf{G}))\rightarrow \mathsf{k}(\mathsf{G}) \rightarrow \mathsf{k}(\cZ(\mathsf{G}))\rightarrow\mathsf{k}
 \end{split}
\end{equation}
Dropping the group context in our example  and emphasizing   the Hopf context, we would be  led to the idea of Hopf center and cocenter.  
 It was first  (partially) discussed in  \cite{AD1} where  an  arbitrary Hopf algebra $\sA$ was assigned with the  left and right counterparts of  \eqref{exsecl}
 \begin{equation} \begin{split}\mathsf{k}&\rightarrow
 \hz(\sA)\rightarrow \sA \\
   \sA &\rightarrow \hc(\sA)\rightarrow\mathsf{k}
 \end{split}
\end{equation} 
The main subject of our   paper concerns   the corresponding right and left missing parts of the exact sequences. We show   the  exact sequences
 \begin{equation}\label{exha}
 \begin{split}\mathsf{k}&\rightarrow \hz(\sA)\rightarrow \sA\rightarrow  \sB\rightarrow \mathsf{k}\\
 \mathsf{k}&\rightarrow \sC\rightarrow \sA\rightarrow  \hc(\sA)\rightarrow  \mathsf{k}
 \end{split}
 \end{equation}
 indeed exist,  providing also certain accurate and/or approximate description of the exact sequences ingredients.
Our   results treat  also the faithful  (co)flatness issue, i.e. we show that $\sA$ is always faithfully flat over $\hz(\sA)\hookrightarrow \sA$ and faithfully coflat over $\sA\twoheadrightarrow\hc(\sA)$. This makes the   exact sequences \eqref{exha}  particularly nice.  
 
Passing to the examples we compute (co)centers  of the quantized universal enveloping algebra  $\sU_q(\fg)$ and the quantized Hopf algebra  $\textrm{Pol}(\GG_q)$ of function on $\GG_q$, the Drinfeld-Jimbo quantization of a compact semisimple simply connected Lie group $\sG$ with Lie algebra $\fg$.

\subsection*{Acknowledgements} 

We would like to thank the anonymous referee for many valuable suggestions on improving the initial draft.

\section{Preliminaries}

Throughout, we work over an algebraically closed field $\mathsf{k}$.

Our main references for the general theory of Hopf algebras are \cite{rad_book, Sweedler}. For a Hopf algebra $\sA$ over $\mathsf{k}$ we denote $\Delta$, $\varepsilon$, $S$ respectively comultiplication counit and antipode of $\sA$. In order to distinguish coproducts  of different Hopf algebras we shall write $\Delta^\sA$ and similarly  $S^\sA$ and $\varepsilon^\sA$ for antipode and counit respectively.  The kernel of counit will be denoted $\sA^+$ and for $\sB\subset \sA$ we write $\sB^+ = \sB\cap \sA^+$. 
 The Sweedler notation 
$\Delta (x) = x_{(1)}\otimes x_{(2)}$, $ x\in\sA$  will be freely used when convenient.  The center of the algebra $\sA$ will be denoted $\cZ(\sA)$. 

The fact that we are working over an algebraically closed field ensures that the simple coalgebras are exactly those dual to matrix algebras $M_n(\mathsf{k})$; we refer to such an object as a {\it matrix coalgebra}. The {\it coradical} of a coalgebra (i.e. the sum of its simple subcoalgebras; see \cite[Section 9.0]{Sweedler}) is a direct sum of matrix subcoalgebras. In particular, a cosemisimple cocommutative coalgebra will automatically be a group coalgebra; this remark is dual to the fact that the only semisimple commutative algebras over $\mathsf{k}$ are the finite products of copies of $\mathsf{k}$.

The monoidal category of  right $\sA$-comodules is denoted by
$\cM^{\sA}$, while the full subcategory of finite dimensional
comodules will be denoted by $\cM^{\sA}_f$. Similarly, the category of
left  comodules and the full subcategory of finite dimensional left
comodules will be denoted by $\leftidx{^\sA}{\cM}$ and
$\leftidx{^\sA}{\cM_f}$ respectively. We will work mostly with right
comodules. 

Recall that a monoidal category is \define{left rigid} if for every
object $x$ there is an object $x^*$ (the \define{left dual} of $x$)
with morphisms $\mathrm{ev}:x^*\otimes x\to 1$ and $\mathrm{coev}:1\to
x\otimes x^*$ (where $1$ is the monoidal unit) so that both 
\begin{equation*}
  \begin{tikzpicture}[auto,baseline=(current  bounding  box.center)]
    \path[anchor=base] (0,0) node (1) {$x$} +(3,.3) node (2) {$x\otimes
      x^*\otimes x$} +(6,0) node (3) {$x$};
         \draw[->] (1) to [bend left=6] node[pos=.5]{$\mathrm{coev}\otimes \mathrm{id}$} (2);     
         \draw[->] (2) to [bend left=6] node[pos=.5]{$\mathrm{id}\otimes\mathrm{ev}$}  (3);     
         \draw[->] (1) to [bend right=10] node[pos=.5,swap] {$\mathrm{id}$} (3);
  \end{tikzpicture}
\end{equation*}
and 
\begin{equation*}
  \begin{tikzpicture}[auto,baseline=(current  bounding  box.center)]
    \path[anchor=base] (0,0) node (1) {$x^*$} +(3,.3) node (2) {$x^*\otimes
      x\otimes x^*$} +(6,0) node (3) {$x^*$};
         \draw[->] (1) to [bend left=6] node[pos=.5]{$\mathrm{id}\otimes\mathrm{coev}$} (2);     
         \draw[->] (2) to [bend left=6] node[pos=.5]{$\mathrm{ev}\otimes\mathrm{id}$}  (3);     
         \draw[->] (1) to [bend right=10] node[pos=.5,swap] {$\mathrm{id}$} (3);
  \end{tikzpicture}
\end{equation*}
commute. 

Similarly, a monoidal category is \define{right rigid} if for every
object $x$ there is an object ${}^*x$ (its \define{right dual}) with
morphisms $\mathrm{ev}:x\otimes{}^*x\to 1$ and $\mathrm{coev}:1\to
{}^*x\otimes x$ that make $x$ a left dual to ${}^*x$. 

For any Hopf algebra $\sA$ the category $\cM^\sA_f$ is left rigid: for
a finite comodule $V$ the antipode $S$ can be used to put the comodule
structure $f\mapsto f_{(0)}\otimes f_{(1)}$ such that
\begin{equation}\label{eq:1}
  f_{(0)}(v)f_{(1)} = f(v_{(0)})Sv_{(1)},\ \forall v\in \sV,\ f\in \sV^*,
\end{equation}
where $v\mapsto v_{(0)}\otimes v_{(1)}$ is our version of Sweedler
notation for right comodule structures. This comodule structure is
such that the usual evaluation $\sV^*\otimes \sV\to \mathsf{k}$ and
dual basis coevaluation map $\mathsf{k}\to \sV\otimes \sV^*$ are comodule
morphisms. 

We can also make $\cM^\sA_f$ into a \define{right} rigid monoidal
category, but we need $\sA$ to have bijective antipode: the procedure
is the same as above, except that the inverse of the antipode is used
instead of $S$ in \Cref{eq:1}.

The adjoint action $\ad:\sA\to\End(\sA)$ of a Hopf algebra on itself is $\ad_xy = x_{(1)}yS(x_{(2)})$. 
The map $\ad:\sA\rightarrow \End(\sA)$ is an algebra homomorphism
\[\ad_{xy} = \ad_{x}\ad_{y}\] satisfying the Leibniz rule 
\[\ad_x(yz) = \ad_{x_{(1)}}(y)\ad_{x_{(2)}}(z)\]
and 
\begin{equation}\label{basic}
\Delta (\ad_x(y))  = x_{(1)}y_{(1)}S(x_{(4)})\otimes \ad_{x_{(2)}}(y_{(2)})
\end{equation}
It is known   that 
\begin{equation}\label{propo1} 
\cZ(\sA) = \{x\in\sA:\ad_y(x) = \varepsilon(y)x \textrm{ for all } y\in \sA\}
\end{equation}
For the proofs of the claims in the present paragraph see e.g. \cite[Section 1.3]{Joseph}. 

The adjoint coaction $\cad:\sA\rightarrow \sA\otimes\sA$ is  
\begin{equation}\label{aldef}
\cad(x) = x_{(2)}\otimes S(x_{(1)})x_{(3)}
\end{equation} 
Then $\cad$ is a linear map satisfying \cite[2.3.1-2.3.3]{AD1}. 
Moreover
\begin{equation}\label{coact} (\cad\otimes\id)(\cad(x)) = x_{(3)}\otimes S(x_{(2)})x_{(4)}\otimes S(x_{(1)})x_{(5)} = (\id\otimes\Delta)(\cad(x))\end{equation} for all $x\in\sA$.

We recall the definition of exact sequence of Hopf algebras following \cite{AD}. 
\begin{definition}\label{esdef}
Consider a sequence of morphisms of Hopf algebras 
\begin{equation}\label{seqha}\sk\rightarrow\sA\xrightarrow{\iota}\sC\xrightarrow{\pi}\sB\rightarrow\sk\end{equation} 
We say that \eqref{seqha} is exact if 
\begin{enumerate}
\item $\iota$ is injective;
\item $\pi$ is surjective;
\item $\ker\pi = \sC\iota(\sA)^+$;
\item $\iota(\sA) = \{x\in\sC:(\pi\otimes\id)\Delta(x) = \I\otimes x\}$.
\end{enumerate}
\end{definition}
In the sequel we will make frequent use of the notions of faithfully (co)flat morphisms of Hopf algebras. For completeness we give the following definition.

\begin{definition}
A left module $\sV$ over an algebra $\sA$ is \define{flat} if the functor $-\otimes_\sA \sV$ on right modules preserves monomorphisms (i.e. injections). $\sV$ is \define{faithfully flat} if it is flat and $-\otimes_\sA \sV$ is also faithful. Right-handed notions are defined analogously for right modules. 

A morphism $\sA\to \sB$ of algebras is left (right) \define{(faithfully) flat} if $\sB$ is (faithfully) flat as a left (right) $\sA$-module. 

A right comodule $\sV$ over a coalgebra $\sC$ is \define{coflat} if the cotensor product functor $\sV\square_\sC-:{}^\sC\cM\to\mathrm{Vect}$ (defined dually to the tensor product; see \cite[$\S$ 10.1]{BrWi}) preserves epimorphisms (i.e. surjections). $\sV$ is \define{faithfully coflat} if it  is coflat and $\sV\square_\sC-$ is also faithful. Once more, there are analogous left-handed notions for left comodules. 

A coalgebra morphism $\sC\to \sD$ is left (right) (faithfully) coflat if $\sC$ is (faithfully) coflat as a left (right) $\sD$-comodule. 
\end{definition}

\begin{remark}\label{re.coflat=inj}
  It is shown in \cite{tak_form} that a $\sD$-comodule is faithfully coflat if and only if it is an \define{injective cogenerator}, in the sense that it is injective and all indecomposable injective $\sC$-comodules appear as summands in $\sD$.

In particular, if $\sD$ is cosemisimple and $\sC\to \sD$ is a surjective morphism of coalgebras then it is (left and right) faithfully coflat. Indeed, our assumptions ensure that all $\sD$-comodules are injective and the surjection $\sC\to \sD$ splits. The conclusion then follows from the fact that $\sD$ itself is an injective cogenerator in both $\cM^\sD$ and ${}^\sD\cM$ (this observation is dual to the fact that an algebra is a projective generator in its category of either left or right modules). 
\end{remark}

\begin{remark}\label{remex}Suppose that the antipode   $S^{\sC}$  of $\sC$ is bijective.
  Let $\sA\xrightarrow{\iota}\sC$ be a faithfully flat morphism of Hopf algebras and suppose that $\iota$ is ad-invariant, i.e. $\ad_x(\iota(\sA))\subset \iota(\sA)$ for all $x\in\sC$. Then conditions (1),(2),(3) imply condition (4) of \Cref{esdef}.
Conversely let $\sC\xrightarrow{\pi}\sB$ be a faithfully coflat morphism of Hopf algebras  and suppose that $\pi$ is normal in the sense of \cite[Definition 1.1.5]{AD}, i.e. \[(\pi\otimes\id)\Delta(x) = \I\otimes x  \iff    (\id\otimes\pi)\Delta(x) = x\otimes \I\] for all $x\in\sC$. 
Then conditions (1),(2),(4) imply condition (3) of \Cref{esdef}.
\end{remark}

The leg numbering notation will be used throughout the paper, i.e. for $x\in \sA\otimes  \sA$  we define $x_{12},x_{13}, x_{23}\in \sA\otimes  \sA\otimes  \sA$, where e.g. $x_{23} = \I\otimes x$. We shall also write $x_{21}$ for the flipped version of $x_{12}$: if $x = a\otimes b$ then  $x_{21} = b\otimes a\otimes \I$ and  similarly for $x_{32}, x_{31}$. Leg numbering notation is also used  for linear maps. If    $T: \sA\otimes  \sA\to \sA\otimes \sA$  then  $\id\otimes T:  \sA\otimes  \sA\otimes  \sA\to  \sA\otimes  \sA\otimes  \sA$ will be denoted by  $T_{23}$, etc. We warn the reader not to confuse the Sweedler notation where the bracketed indices are used, with the unbracketed leg numbering notation.  
 
\section{Hopf center}
Let us begin with the following definition. 
\begin{definition} \cite[Definition 2.2.3]{AD1}
Let $\sA$ be a Hopf algebra.
The Hopf  center $\hz(\sA)$ of $\sA$ is   the largest Hopf subalgebra of $\sA$ contained in the center $\cZ(\sA)$.
\end{definition}
 Analyzing the reasoning which proceeds \cite[Definition 2.2.3]{AD1} we see that if $S$ is bijective then we have
\begin{equation}\label{defH}\hz(\sA) = \{x\in \sA:(\id\otimes\Delta)(\Delta(x)) \in \sA\otimes \cZ(\sA)\otimes \sA\}\end{equation}

In what follows we shall give a slightly weaker condition for $x\in \sA$ to belong to 
 $\hz(\sA)$. In order to see the relation with \cite[Section 2]{AD1} let us note that applying $\varepsilon$ to the third leg of  $(\id\otimes\Delta)(\Delta(x))\in \sA\otimes \cZ(\sA)\otimes \sA $ we get  
 $\Delta(x) \in  \sA\otimes \cZ(\sA)$. Let us emphasize that the  invertibility assumption which enters the formulation of the next theorem can  be   dropped, by using the Tannakian description of the Hopf center which we shall give later.  For a locally compact quantum group version of the following result see \cite{KSS}.
 
\begin{theorem} \label{main}
Let $\sA$ be a Hopf algebra with bijective antipode.
 Let us define 
 \[\sM =\{x\in \sA: \Delta(x) \in \sA\otimes  \cZ(\sA)\}\]
 Then  $\sM= \hz(\sA)$.
\end{theorem}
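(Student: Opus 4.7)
The plan is to establish the two inclusions separately. The easy one, $\hz(\sA) \subseteq \sM$, is immediate: $\hz(\sA)$ is by definition a Hopf subalgebra contained in $\cZ(\sA)$, so $\Delta(\hz(\sA)) \subseteq \hz(\sA) \otimes \hz(\sA) \subseteq \sA \otimes \cZ(\sA)$. For the reverse inclusion $\sM \subseteq \hz(\sA)$, I plan to show that $\sM$ itself is a Hopf subalgebra of $\sA$ contained in $\cZ(\sA)$; maximality of $\hz(\sA)$ then forces the inclusion. Several of the required properties come for free: applying $(\varepsilon \otimes \id)$ to $\Delta(x) \in \sA \otimes \cZ(\sA)$ yields $x \in \cZ(\sA)$, so $\sM \subseteq \cZ(\sA)$; and because $\cZ(\sA)$ is a subalgebra of $\sA$, the preimage $\sM = \Delta^{-1}(\sA \otimes \cZ(\sA))$ under the algebra morphism $\Delta$ is a subalgebra containing $\I$.

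The heart of the argument is the closure property $\Delta(\sM) \subseteq \sM \otimes \sM$. Given $x \in \sM$, write $\Delta(x) = \sum_i a_i \otimes c_i$ in minimal-rank form, so that both $\{a_i\}$ and $\{c_i\}$ are linearly independent and $c_i \in \cZ(\sA)$. Coassociativity gives $\sum_i \Delta(a_i) \otimes c_i = \sum_i a_i \otimes \Delta(c_i)$, and applying functionals $\psi_l$ dual to $a_l$ to the first leg extracts $\Delta(c_l) = \sum_i (\psi_l \otimes \id)\Delta(a_i) \otimes c_i \in \sA \otimes \cZ(\sA)$, so $c_l \in \sM$. The harder step, which I regard as the main obstacle, is to show $a_i \in \cZ(\sA)$. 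For this I use $x \in \cZ(\sA)$, which gives $\Delta(\ad_z(x)) = \varepsilon(z) \Delta(x)$ for every $z$; expanding the left-hand side via the fourfold iterated coproduct of $z$ yields $\sum_i z_{(1)} a_i S(z_{(4)}) \otimes z_{(2)} c_i S(z_{(3)}) = \varepsilon(z) \sum_i a_i \otimes c_i$. Commuting $c_i$ past $z_{(2)}$ by centrality and collapsing via the antipode-axiom identity $\sum_{(z)} z_{(1)} \otimes z_{(2)} S(z_{(3)}) \otimes z_{(4)} = z_{(1)} \otimes \I \otimes z_{(2)}$ reduces this to $\sum_i \ad_z(a_i) \otimes c_i = \varepsilon(z) \sum_i a_i \otimes c_i$, and linear independence of $\{c_i\}$ then forces $\ad_z(a_i) = \varepsilon(z) a_i$ for every $z$, giving $a_i \in \cZ(\sA)$ by \eqref{propo1}.

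With both $a_i \in \cZ(\sA)$ and $c_i \in \sM$ in hand, a short iteration finishes the coalgebra closure: each $c_i$ lies in $\sM$, so the centrality argument just given, applied to $c_i$ in place of $x$, shows that in a minimal-rank decomposition $\Delta(c_i) = \sum_k u_{i,k} \otimes v_{i,k}$ the first factors $u_{i,k}$ are again central, and hence $\Delta(c_i) \in \cZ(\sA) \otimes \cZ(\sA)$. Substituting this into $\sum_i \Delta(a_i) \otimes c_i = \sum_i a_i \otimes \Delta(c_i)$ and applying functionals $\chi_l$ dual to $c_l$ on the third leg produces $\Delta(a_l) = \sum_i a_i \otimes D_{l,i}$ with each $D_{l,i} \in \cZ(\sA)$, so $\Delta(a_l) \in \sA \otimes \cZ(\sA)$ and $a_l \in \sM$. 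Lastly, $S$-invariance of $\sM$ follows from bijectivity of the antipode (which forces $S(\cZ(\sA)) = \cZ(\sA)$): since $\Delta(Sx) = \sum_i S(c_i) \otimes S(a_i) \in \cZ(\sA) \otimes \cZ(\sA)$ by what has been shown, we get $Sx \in \sM$. Hence $\sM$ is a Hopf subalgebra of $\sA$ contained in $\cZ(\sA)$, and $\sM \subseteq \hz(\sA)$ by maximality.
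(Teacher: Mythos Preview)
Your proof is correct and rests on the same key computation as the paper: using \eqref{basic} together with $x\in\cZ(\sA)$ and $c_i\in\cZ(\sA)$ to deduce that the left tensorands $a_i$ of $\Delta(x)$ are central (equivalently, in the paper's formulation, that $\Delta(\sM)\subset\cZ(\sA)\otimes\cZ(\sA)$). The only substantive organizational difference is how you close up to $\Delta(\sM)\subset\sM\otimes\sM$: you iterate the centrality argument on each $c_i$ to force $\Delta(c_i)\in\cZ(\sA)\otimes\cZ(\sA)$ and then slice coassociativity to pull $a_l$ into $\sM$, whereas the paper gets $\Delta(\sM)\subset\sM\otimes\sA$ in one stroke from $S$-bijectivity (since $\Delta(S(x))=(S\otimes S)\Delta^{\mathrm{op}}(x)$ and $S(\sM)=\sM$), and then intersects with $\Delta(\sM)\subset\sA\otimes\sM$. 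Your route trades the minimal-rank/dual-functional bookkeeping for avoiding that symmetry trick; the paper's route is a line shorter but relies on the antipode flip earlier. Both are fine and the mathematical content is the same.
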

\begin{proof}
Let us first note that if $x\in \sM$ then $x = (\varepsilon\otimes\id)(\Delta(x))\in \cZ(\sA)$ thus $\sM \subset \cZ(\sA)$. 
 Let us show that 
 \begin{equation}\label{zatenza}\Delta(\sM)\subset  \cZ(\sA)\otimes \cZ(\sA)\end{equation}  Using \Cref{propo1} it is enough to show that for  $y\in \sM$ we have 
 \[(\ad_x\otimes\id)(\Delta(y)) = \varepsilon(x)\Delta(y)\]  for all $x\in \sA$. 
 We compute 
 \[
\begin{split}
(\ad_x\otimes \id)(\Delta(y)) & = x_{(1)}y_{(1)}S(x_{(2)})\otimes y_{(2)}\\
& = x_{(1)}y_{(1)}S(x_{(3)})\otimes \varepsilon(x_{(2)}) y_{(2)}\\
& = x_{(1)}y_{(1)}S(x_{(4)})\otimes x_{(2)}y_{(2)}S(x_{(3)})\\
& =  \Delta(\ad_x(y))\\
& =  \Delta(\varepsilon (x)(y))\\
& =  \varepsilon (x) \Delta(y)\\
 \end{split}
\]
where in  the third equality we used that $y_{(2)} \in \cZ(\sA)$; in the fourth equality we used \eqref{basic}; in the fifth equality we used $y\in \cZ(\sA)$. 

Noting that for  $x\in \sM$  
\[(\id\otimes \Delta)(\Delta(x)) = (\Delta\otimes\id)(\Delta(x)) \in \sA\otimes \sA\otimes \cZ(\sA)\] we get 
\begin{equation}\label{coidM}\Delta(\sM)\subset \sA\otimes \sM\end{equation}

The invertibility of $S$ yields $S(\cZ(\sA))
= \cZ(\sA)$ thus for all $x\in \sM$ we get 
\[\Delta(S(x)) = (S\otimes S)(\Delta^{\textrm{op}}(x)) \in S(\cZ(\sA))\otimes  S(\cZ(\sA)) =\cZ(\sA)\otimes\cZ(\sA) \]
Using \Cref{zatenza} and the bijectivity of $S$ we conclude that
 $S(\sM) =  \sM$.  The $S$-invariance of $\sM$  together with \Cref{coidM} implies that  $\sM$ is a Hopf subalgebra. 

If $\sN\subset \cZ(\sA)$ is a Hopf subalgebra then
\[\Delta(\sN)\subset \sA\otimes  \cZ(\sA)\]
thus $\sN\subset \sM$. This means that $M$ is the largest Hopf subalgebra of $\sA$, and hence coincides with $\hz(\sA)$.  
\end{proof}
 
\begin{lemma}\label{easy}
Let $\sA$ be a Hopf algebra with bijective antipode.
Then the center  of $\sA$ equals
\[\hz(\sA) = \{x\in\cZ(\sA): \Delta(x)\in\cZ(\sA)\otimes \cZ(\sA)\}\]
\end{lemma}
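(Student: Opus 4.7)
The statement is immediate from \Cref{main}, and the plan is essentially to unpack the two inclusions.

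Set $\sM' = \{x\in\cZ(\sA): \Delta(x)\in\cZ(\sA)\otimes \cZ(\sA)\}$ and $\sM = \{x\in \sA: \Delta(x) \in \sA\otimes \cZ(\sA)\}$. By \Cref{main} we have $\sM = \hz(\sA)$, so it is enough to check that $\sM = \sM'$.

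For the inclusion $\sM' \subset \sM$, note that $\cZ(\sA)\otimes\cZ(\sA) \subset \sA\otimes\cZ(\sA)$, so any $x\in\sM'$ automatically satisfies $\Delta(x)\in\sA\otimes\cZ(\sA)$ and hence lies in $\sM$.

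For the reverse inclusion $\sM\subset\sM'$, I would invoke what was already established inside the proof of \Cref{main}. First, applying $\varepsilon\otimes\id$ to the defining containment gives $\sM\subset\cZ(\sA)$. Second, the displayed inclusion \eqref{zatenza}, shown there by a direct computation using \eqref{basic}, \eqref{propo1}, and the fact that elements of $\sM$ already lie in $\cZ(\sA)$, yields $\Delta(\sM)\subset\cZ(\sA)\otimes\cZ(\sA)$. Combining these two facts shows $\sM\subset\sM'$, completing the proof.

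There is no real obstacle here beyond quoting the earlier theorem; the content of the lemma is that, \emph{a posteriori}, the single-sided centrality condition $\Delta(x)\in\sA\otimes\cZ(\sA)$ is automatically upgraded to the symmetric one $\Delta(x)\in\cZ(\sA)\otimes\cZ(\sA)$ together with $x\in\cZ(\sA)$, so the lemma is best presented as a brief reformulation of \Cref{main} rather than an independent argument.
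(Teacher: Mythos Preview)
Your proof is correct and essentially mirrors the paper's own argument: both directions are deduced from \Cref{main}, with the forward inclusion being trivial and the reverse using that $\hz(\sA)\subset\cZ(\sA)$ and $\Delta(\hz(\sA))\subset\cZ(\sA)\otimes\cZ(\sA)$. The only cosmetic difference is that you cite \eqref{zatenza} explicitly for the latter containment, whereas the paper's one-line proof reads this off directly from $\hz(\sA)$ being a Hopf subalgebra of $\cZ(\sA)$.
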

\begin{proof}
Clearly if $x\in\cZ(\sA)$ and $\Delta(x)\in\cZ(\sA)\otimes \cZ(\sA)$ then using \Cref{main} we get  $x\in\hz(\sA) $. Conversely if $x\in \hz(\sA)$ then   $x\in  \cZ(\sA)$ and $\Delta(x)\in\cZ(\sA)\otimes \cZ(\sA)$. 
\end{proof}
\begin{lemma}\label{easy1}
Let $\sA$ be a Hopf algebra with bijective antipode and $\sV$  a right $\sA$-comodule with the corresponding   map $\rho:\sV\rightarrow \sV\otimes \sA$ satisfying 
\[\rho(\sV)\subset \sV\otimes\cZ(\sA)\] Then 
\[\rho(\sV)\subset\sV\otimes\hz(\sA)\] In particular  $\sV$ can be viewed as a right $\hz(\sA)$-comodule. The same holds for  left comodules. 
\end{lemma}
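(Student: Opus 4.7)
My plan is to apply the characterization of $\hz(\sA)$ from \Cref{easy}, namely that $\hz(\sA) = \{x \in \cZ(\sA) : \Delta(x) \in \cZ(\sA) \otimes \cZ(\sA)\}$, together with the coassociativity of the coaction $\rho$. Since by hypothesis the image of $\rho$ already lands in $\sV \otimes \cZ(\sA)$, the only thing left to check is that the ``second leg" of each $\rho(v)$ satisfies the further condition $\Delta(\cdot) \in \cZ(\sA) \otimes \cZ(\sA)$.

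Concretely, I would fix a basis $\{v_\alpha\}$ of $\sV$ (or, if $\sV$ is infinite-dimensional, argue after restricting to a finite-dimensional subcomodule containing a chosen $v$). Writing
\[
\rho(v_\alpha) = \sum_\beta v_\beta \otimes a_{\beta\alpha}, \qquad a_{\beta\alpha} \in \cZ(\sA),
\]
the coaction identity $(\rho \otimes \id)\rho = (\id \otimes \Delta)\rho$ gives
\[
\sum_\beta v_\beta \otimes \Delta(a_{\beta\alpha}) = \sum_{\beta,\gamma} v_\gamma \otimes a_{\gamma\beta} \otimes a_{\beta\alpha}.
\]
Linear independence of the $v_\gamma$ in the first tensor factor then forces
\[
\Delta(a_{\gamma\alpha}) = \sum_\beta a_{\gamma\beta} \otimes a_{\beta\alpha} \in \cZ(\sA) \otimes \cZ(\sA),
\]
since every $a_{\gamma\beta}, a_{\beta\alpha} \in \cZ(\sA)$ by hypothesis. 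Combined with $a_{\gamma\alpha} \in \cZ(\sA)$, \Cref{easy} yields $a_{\gamma\alpha} \in \hz(\sA)$, whence $\rho(\sV) \subset \sV \otimes \hz(\sA)$ as desired.

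The statement about left comodules is a mirror image: the only ingredient that is not strictly symmetric is the characterization of $\hz(\sA)$, but since $S$ is assumed bijective, applying $S$ to \Cref{main} gives the analogous description $\hz(\sA) = \{x \in \sA : \Delta(x) \in \cZ(\sA) \otimes \sA\}$, and the same coassociativity argument on the left coaction $\lambda : \sV \to \sA \otimes \sV$ goes through verbatim.

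Since essentially all of the real work has already been done in \Cref{main} and \Cref{easy}, I do not expect any serious obstacle here; the only mildly delicate point is the reduction to linearly independent ``first legs", and this is handled either by choosing a basis of $\sV$ or, equivalently, by noting that the map $\sV \otimes \sA \to \sV \otimes (\sA/\hz(\sA))$ annihilates $\rho(\sV)$ precisely because both sides of the coaction identity above lie in $\sV \otimes \cZ(\sA) \otimes \cZ(\sA)$.
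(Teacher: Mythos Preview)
Your proof is correct and follows essentially the same approach as the paper: both use coassociativity $(\id\otimes\Delta)\circ\rho = (\rho\otimes\id)\circ\rho$ together with the hypothesis $\rho(\sV)\subset\sV\otimes\cZ(\sA)$ to land in $\sV\otimes\cZ(\sA)\otimes\cZ(\sA)$, and then invoke \Cref{easy}. The paper simply omits the basis and writes the containment directly, whereas you unpack it with matrix coefficients; also, for the left comodule case \Cref{easy} already applies verbatim, so you do not actually need the alternate description $\hz(\sA)=\{x:\Delta(x)\in\cZ(\sA)\otimes\sA\}$.
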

\begin{proof}
Let us note that 
\[
\begin{split}
(\id\otimes\Delta)(\rho(\sV)) &= (\rho\otimes\id)(\rho(\sV))
\\&\subset (\rho\otimes\id)(\sV\otimes \cZ(\sA))\\
&\subset \sV\otimes\cZ(\sA)\otimes \cZ(\sA)
\end{split}
\]
Using  \Cref{easy} we conclude that 
$\rho(\sV)\subset \sV\otimes \hz(\sA)$ and the rest is clear. 
\end{proof}
Let us note we have $\ad_x(\hz(\sA)) = \varepsilon(x)\hz(\sA)$, thus   the embedding $\iota:\hz(\sA)\to \sA$ is ad-invariant. Since $\hz(\sA)$ is commutative, we can use  \cite[Proposition 3.12]{ArkGait} to conclude that $\iota$ is a faithfuly flat morphism. Using  \Cref{remex} we get the following theorem. 
\begin{theorem} 
Let $\sA$ be a Hopf algebra with bijective antipode. Then the embedding $\iota:\hz(\sA)\to \sA$ is faithfully flat and there exists   a Hopf algebra $\sB $ and a surjective morphism  $\pi:\sA\rightarrow  \sB $ of Hopf algebras such that the following sequence of Hopf algebras is exact
  \[\sk\rightarrow \hz(\sA)\rightarrow \sA\rightarrow  \sB \rightarrow  \sk\] 
\end{theorem}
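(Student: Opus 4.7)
The plan is to assemble the exact sequence from the two pieces the paper has already put in place: the centrality of $\hz(\sA)$ (which gives \textbf{ad-invariance} and hence normality of the inclusion), and the faithful flatness result cited from \cite{ArkGait} for commutative Hopf subalgebras of Hopf algebras with bijective antipode. Once these are in hand, \Cref{remex} reduces exactness to conditions (1),(2),(3) of \Cref{esdef}, which are essentially built into the quotient construction.

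First I would verify ad-invariance of the embedding $\iota: \hz(\sA)\to \sA$. For $z\in\hz(\sA)\subset\cZ(\sA)$ and $x\in\sA$, centrality gives
\[
\ad_x(z) = x_{(1)}zS(x_{(2)}) = z x_{(1)}S(x_{(2)}) = \varepsilon(x)z,
\]
so $\ad_x(\iota(\hz(\sA)))\subset \iota(\hz(\sA))$. Since $\hz(\sA)$ is a commutative Hopf subalgebra of $\sA$ and the antipode of $\sA$ is bijective, \cite[Proposition 3.12]{ArkGait} shows that $\iota$ is faithfully flat.

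Next I would construct the quotient. Set $\sB := \sA/\sA\iota(\hz(\sA))^+$ and let $\pi:\sA\to\sB$ be the canonical projection. Ad-invariance of $\iota$ (i.e.\ normality of the Hopf subalgebra $\hz(\sA)$) ensures that $\sA\iota(\hz(\sA))^+$ is a Hopf ideal, so $\sB$ carries a unique Hopf algebra structure making $\pi$ a surjective Hopf algebra morphism. At this point conditions (1), (2), (3) of \Cref{esdef} hold by construction: $\iota$ is injective, $\pi$ is surjective, and $\ker\pi = \sA\iota(\hz(\sA))^+$.

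Finally, I would invoke \Cref{remex}: since $\sA$ has bijective antipode, $\iota$ is faithfully flat and ad-invariant, and conditions (1)--(3) of \Cref{esdef} hold, condition (4) follows automatically. Thus
\[
\sk\rightarrow \hz(\sA)\rightarrow \sA\rightarrow \sB\rightarrow\sk
\]
is exact. The only non-routine ingredient is the appeal to \cite{ArkGait} for faithful flatness; everything else is a direct application of results already quoted in the preliminaries, so I expect no further obstacle.
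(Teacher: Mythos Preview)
Your proposal is correct and follows essentially the same approach as the paper: the paper's argument (given in the paragraph immediately preceding the theorem) consists precisely of noting ad-invariance of $\hz(\sA)$ via centrality, invoking \cite[Proposition 3.12]{ArkGait} for faithful flatness, and then appealing to \Cref{remex}. You have simply fleshed out the construction of $\sB$ and the verification of conditions (1)--(3), which the paper leaves implicit.
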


\subsection{A Tannakian description of the Hopf center}\label{subse.tann}

According to the general philosophy of Tannaka reconstruction (as presented e.g. in \cite{schau_tann}), the inclusion $\hz(\sA)\subseteq \sA$ of Hopf algebras can be described completely by identifying the full inclusion $\cM^{\hz(\sA)}_f\to \cM^{\sA}_f$ of categories of finite-dimensional comodules. We refer to loc. cit. for background on Tannaka reconstruction for coalgebras, bialgebras and Hopf algebras. 

The following discussion applies to bialgebras, so throughout \Cref{subse.tann} $\sA$ will be a fixed but arbitrary bialgebra; when we require it to be a Hopf algebra (or Hopf algebra with bijective antipode) we will say so explicitly.

\begin{definition}\label{def.central_comod}
  An $\sA$-comodule $\sV\in \cM^\sA_f$ is \define{central} if the flip map
\begin{equation*}
    \tau:\sV\otimes \sW\to \sW\otimes \sV,\quad v\otimes w\mapsto w\otimes v
  \end{equation*}
is a comodule morphism for all $\sW\in \cM^\sA_f$. 
\end{definition}

Consider the full subcategory $\cC\subseteq \cM^\sA_f$ consisting of the
central comodules. Before stating the next result, recall also the following definition from \cite{schau_tann} (see Definition 2.2.10 therein); here, a \define{subquotient} of a comodule is a quotient of a subcomodule (or equivalently, a subcomodule of a quotient comodule).

\begin{definition}\label{def.closed}
  A full subcategory $\cC$ of an abelian category $\cD$ is \define{closed} if all subquotients in $\cD$ of an object $x\in \cC$ are again in $\cC$. 
\end{definition}

\begin{lemma}
  \begin{enumerate}
     \item[(a)]   $\cC\subseteq \cM^\sA_f$ is a closed monoidal
       subcategory.    
     \item[(b)] If $\sA$ is a Hopf algebra, then $\cC$ is left rigid. 
     \item[(c)] If in addition the antipode of $\sA$ is bijective, then
       $\cC$ is both left and right rigid. 
  \end{enumerate}
\end{lemma}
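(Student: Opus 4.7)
For part (a), I would first note that $\sk \in \cC$ tautologically, since $\tau_{\sk,\sW}$ is the unit isomorphism of $\cM^\sA_f$. Closure under tensor products follows from the factorization
\[
\tau_{\sV_1 \otimes \sV_2, \sW} = (\tau_{\sV_1, \sW} \otimes \id_{\sV_2}) \circ (\id_{\sV_1} \otimes \tau_{\sV_2, \sW}),
\]
each factor being a comodule morphism by hypothesis. For closure under subquotients, given $\sV \in \cC$ and a subcomodule $\iota:\sV' \hookrightarrow \sV$, the identity $(\id_\sW \otimes \iota) \circ \tau_{\sV', \sW} = \tau_{\sV, \sW} \circ (\iota \otimes \id_\sW)$ has a comodule morphism on its right-hand side; since $\id_\sW \otimes \iota$ is an injective comodule map, one concludes $\tau_{\sV', \sW}$ is itself a comodule morphism. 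The case of quotients is dual, using that a comodule structure descends uniquely along a surjective comodule map.

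For part (b), I would realize $\tau_{\sV^*, \sW}$ as the standard zig-zag
\[
\sV^* \otimes \sW \xrightarrow{\id \otimes \id \otimes \mathrm{coev}} \sV^* \otimes \sW \otimes \sV \otimes \sV^* \xrightarrow{\id \otimes \tau_{\sW, \sV} \otimes \id} \sV^* \otimes \sV \otimes \sW \otimes \sV^* \xrightarrow{\mathrm{ev} \otimes \id \otimes \id} \sW \otimes \sV^*.
\]
The outer arrows are comodule morphisms by the left-rigid structure on $\cM^\sA_f$, available because $\sA$ is a Hopf algebra; the middle arrow is a comodule morphism as the inverse of $\tau_{\sV, \sW}$, which is one by the hypothesis $\sV \in \cC$. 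A brief element-level check using $\mathrm{coev}(1) = \sum_i v_i \otimes v_i^*$ confirms that this composition equals $\tau_{\sV^*, \sW}$, so $\sV^* \in \cC$.

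For part (c), bijectivity of $S$ endows $\cM^\sA_f$ with a right-rigid structure, providing $\mathrm{ev}:\sV \otimes {}^*\sV \to \sk$ and $\mathrm{coev}:\sk \to {}^*\sV \otimes \sV$. I would build an analogous but longer zig-zag for $\tau_{{}^*\sV, \sW}$: start by inserting $\mathrm{coev}$ on the right to pass to ${}^*\sV \otimes \sW \otimes {}^*\sV \otimes \sV$; then apply three successive adjacent transpositions to transport the new $\sV$-factor leftward past ${}^*\sV$, past $\sW$, and past the original ${}^*\sV$, arriving at $\sV \otimes {}^*\sV \otimes \sW \otimes {}^*\sV$; and finally contract via $\mathrm{ev}$ on the first two tensor factors. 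Every flip appearing here has $\sV$ as one of its two slots, hence is a comodule morphism by $\sV \in \cC$, while $\mathrm{ev}$ and $\mathrm{coev}$ are comodule morphisms by right rigidity. The main (mild) obstacle is the bookkeeping: the composition must be orchestrated so that no flip ever has ${}^*\sV$ as one of its two factors, since ${}^*\sV$ is \emph{a priori} not known to be central. Once the ordering is arranged correctly, verifying that the composition equals $\tau_{{}^*\sV, \sW}$ is a routine element-level unpacking as in (b).
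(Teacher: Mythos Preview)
Your argument is correct. For part (a) you supply the details the paper leaves to the reader. For parts (b) and (c) you take a more elementary route than the paper: you exhibit $\tau_{\sV^*,\sW}$ (respectively $\tau_{{}^*\sV,\sW}$) explicitly as a composite of comodule morphisms via the standard zig-zag (mate) construction. The paper instead argues more abstractly: it first observes that for $\sV\in\cC$ the object $\sV^*$ automatically serves as a \emph{right} dual of $\sV$ as well (since the flips $\tau_{\sV,\sV^*}$ let one transport the left-dual evaluation and coevaluation to the other side), and then invokes the fact that the natural isomorphism $\sV\otimes - \cong -\otimes \sV$ of endofunctors induces a natural isomorphism between their right adjoints ${}^*\sV\otimes - \cong \sV^*\otimes -$ and $-\otimes \sV^*$. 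Your zig-zag is precisely what one obtains by unwinding this adjoint-functor argument, so the two proofs are close cousins; yours has the advantage of being fully explicit and self-contained (and of not needing the intermediate observation $\sV^*\cong{}^*\sV$), while the paper's phrasing is more conceptual but also rather terse---in particular it does not pause to verify that the induced natural isomorphism between adjoints actually \emph{is} the flip, which is exactly the content of your element-level checks.
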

\begin{proof}
  We leave part (a) to the reader, and instead focus on part (b). In
other words, we will argue that if $\cM^\sA_f$ admits left duals
$\sV\mapsto \sV^*$, then $\cC$ is closed under taking such duals. The
argument for right duals in part (c) is completely analogous, so we
will omit that as well. 

To check (b), note first that for any $\sV\in \cC$ the left dual $\sV^*$ and the right dual ${}^*\sV$ are isomorphic through the canonical map.  Now, the flip induces a natural isomorphism $\sV\otimes -\cong -\otimes \sV$ of endofunctors of $\cM^{\sA}_f$, and hence also induces a natural isomorphism between their right adjoints, 
\begin{equation*}
  {}^*\sV\otimes -\cong \sV^*\otimes - \text{ and } -\otimes \sV^*
\end{equation*}
respectively. 
\end{proof}

According to \cite[Lemma 2.2.12]{schau_tann} (or rather a monoidal version thereof), this implies that $\cC$ is of the form $\cM^\sC_f$ for some sub-bialgebra $\sC\subseteq \sA$. We will see below that in the case of Hopf algebras $\sC$ is precisely the Hopf center discussed above. First, we need

\begin{lemma}\label{le.center_tann_aux}
 Let  $\sV\in \cM^{\sA}_f$. Then  $\sV\in \cC\subset\cM^{\sA}_f$ if and only if the coefficient coalgebra $ C (\sV)\subset \sA$ of $\sV$ is contained in the center. 
\end{lemma}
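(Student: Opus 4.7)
The plan is to translate the centrality condition for $\sV$ into an algebraic commutation relation between matrix coefficients, and then invoke the fundamental theorem of coalgebras (every element of $\sA$ lies in some finite-dimensional subcoalgebra) to pass between coefficient coalgebras of individual comodules and the whole of $\sA$.

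First, I would fix a basis $\{v_i\}$ of $\sV$ and, for an arbitrary $\sW\in\cM^\sA_f$, a basis $\{w_j\}$ of $\sW$. Write the coactions as
\[
\rho_\sV(v_i)=\sum_k v_k\otimes a_{ki},\qquad \rho_\sW(w_j)=\sum_l w_l\otimes b_{lj},
\]
so that the $a_{ki}$ span $C(\sV)$ and the $b_{lj}$ span $C(\sW)$. A direct computation of $\rho_{\sW\otimes\sV}\circ\tau$ and $(\tau\otimes\id)\circ\rho_{\sV\otimes\sW}$ on $v_i\otimes w_j$ yields, respectively,
\[
\sum_{k,l}w_l\otimes v_k\otimes b_{lj}a_{ki}\quad\text{and}\quad \sum_{k,l}w_l\otimes v_k\otimes a_{ki}b_{lj}.
\]
Linear independence of $\{w_l\otimes v_k\}$ then shows that the flip $\tau:\sV\otimes\sW\to\sW\otimes\sV$ is a comodule morphism exactly when every element of $C(\sV)$ commutes with every element of $C(\sW)$ inside $\sA$.

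With that equivalence established, the backward implication is immediate: if $C(\sV)\subseteq\cZ(\sA)$, then $C(\sV)$ commutes with $C(\sW)\subseteq\sA$ for every $\sW$, so $\sV$ is central. For the forward implication I would invoke the fundamental theorem of coalgebras: any $x\in\sA$ lies in a finite-dimensional subcoalgebra $\sD\subseteq\sA$, and $\sD$ itself viewed as a right $\sA$-comodule via $\Delta$ is an object of $\cM^\sA_f$ with $C(\sD)=\sD$. Centrality of $\sV$ therefore forces $C(\sV)$ to commute with every such $\sD$, hence with all of $\sA$.

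I do not expect a genuine obstacle here; the argument is essentially bookkeeping plus the fundamental theorem of coalgebras, and it pointedly makes no use of the antipode, which is consistent with the lemma being stated at the level of an arbitrary bialgebra.
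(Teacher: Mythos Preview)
Your proposal is correct and follows essentially the same route as the paper: both translate the flip condition into the commutation relation $a_{ki}b_{lj}=b_{lj}a_{ki}$ via a basis computation, and then conclude by observing that the coefficient coalgebras $C(\sW)$ of finite-dimensional comodules jointly span $\sA$ (which is exactly the content of your appeal to the fundamental theorem of coalgebras).
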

\begin{proof}
  If the property from \Cref{def.central_comod} holds for a specific comodule $\sW$ we will say that $\sV$ \define{commutes} with $W$. 

Let $v_i$ and $w_k$ be bases for $\sV$ and $\sW$ respectively, and denote their respective comodule structures by
\begin{equation*}
  v_j\mapsto v_i\otimes a_{ij} \text{ and } w_\ell\mapsto w_k\otimes b_{k\ell},
\end{equation*}
where repeated indices indicate summation. Then, $\sV$ commutes with $\sW$ if and only if the diagram 
\begin{equation}\label{eq.center_tann_aux}
  \begin{tikzpicture}[auto,baseline=(current  bounding  box.center)]
    \path[anchor=base] (0,0) node (1) {$\sV\otimes \sW$} +(4,0) node (2) {$\sV\otimes \sW\otimes \sA$} +(0,-2) node (3) {$\sW\otimes \sV$} +(4,-2) node (4) {$\sW\otimes \sV\otimes \sA$};
         \draw[->] (1) to  (2);     
         \draw[->] (3) to  (4);     
         \draw[->] (1) to node[pos=.5,swap] {$\tau$}  (3);
         \draw[->] (2) to node[pos=.5] {$\tau\otimes\id_\sA$}  (4);
  \end{tikzpicture}
\end{equation}  
is commutative (the horizontal arrows being the comodule structure maps of the two tensor products). 

Applying the two paths (right-down and down-right) in \Cref{eq.center_tann_aux} to the element $v_j\otimes w_\ell\in \sV\otimes \sW$ and then reading off the coefficient of $w_k\otimes v_i\in \sW\otimes \sV$, we see that this is equivalent to $a_{ij}b_{k\ell} = b_{k\ell}a_{ij}$ for all possible index choices. In other words, all elements of the coefficient coalgebra $C(\sV)$ must commute with all elements of $C(\sW)$. The conclusion follows from the fact that as $\sW$ is allowed to range over all finite-dimensional comodules, the resulting $C(\sW)$ jointly span all of $\sA$. 
\end{proof}

Consider now the  sub-bialgebra $\sC$ which has $\cC\subseteq \cM^\sA_f$ as its comodule category. It is the linear span of the coefficients
of its comodules, so it must be contained in the center $\cZ(\sA)$. On
the other hand, any sub-bialgebra of $\sA$ contained in $\cZ(\sA)$ corresponds
via Tannaka reconstruction to some closed monoidal subcategory of
$\cM^\sA_f$ consisting of central comodules, which must then be
contained in $\cC$. In conclusion, denoting again by $\hz(\sA)$ the largest
sub-bialgebra of $\sA$ contained in the center $\cZ(\sA)$ (for
consistency, and also because we are mostly interested in Hopf algebras
anyway) we get

\begin{theorem}\label{th.center_tann}
The inclusion $\cC\to \cM^\sA_f$ of the category of central comodules is
equivalent to $\cM^{\hz}_f\subseteq \cM^\sA_f$, where $\hz=\hz(A)$ is
the largest sub-bialgebra of $\sA$ contained in the center $\cZ$ of $\sA$. 

Moreover, if $\sA$ is a Hopf algebra (or Hopf algebra with bijective
antipode), then $\hz$ is also the largest Hopf subalgebra
(respectively Hopf subalgebra with bijective antipode) contained in $\cZ$
\qedhere  
\end{theorem}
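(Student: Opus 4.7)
The plan is to assemble the theorem from Schauenburg's bialgebra Tannaka correspondence (\cite[Lemma 2.2.12]{schau_tann}), which gives a bijection between closed monoidal subcategories of $\cM^\sA_f$ and sub-bialgebras of $\sA$. By the preceding lemma, $\cC$ is a closed monoidal subcategory of $\cM^\sA_f$, so under this correspondence $\cC$ arises as $\cM^{\sC}_f$ for a unique sub-bialgebra $\sC \subseteq \sA$.

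The equality $\sC = \hz$ will follow from two inclusions. First, $\sC$ is the linear span of the coefficient coalgebras $C(\sV)$ for $\sV \in \cC$, and each such $C(\sV)$ is contained in $\cZ(\sA)$ by \Cref{le.center_tann_aux}; thus $\sC \subseteq \cZ(\sA)$. Conversely, given any sub-bialgebra $\sN \subseteq \cZ(\sA)$, every finite-dimensional $\sN$-comodule has coefficient coalgebra in $\sN \subseteq \cZ(\sA)$, so by \Cref{le.center_tann_aux} it is central, i.e.\ $\cM^\sN_f \subseteq \cC = \cM^\sC_f$. Tannaka reconstruction then gives $\sN \subseteq \sC$, showing $\sC$ is the largest sub-bialgebra of $\sA$ inside $\cZ(\sA)$.

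For the second half of the statement, when $\sA$ is a Hopf algebra the preceding lemma (part (b)) ensures $\cC$ is left rigid, and a standard consequence of Tannaka reconstruction for Hopf algebras (see the Hopf algebra portion of \cite{schau_tann}) is that a sub-bialgebra whose finite-dimensional comodule category is left rigid is automatically a Hopf subalgebra, with antipode inherited from $\sA$. In the bijective-antipode case, part (c) upgrades $\cC$ to a rigid category on both sides, forcing $\sC$ to have bijective antipode as well. Maximality among Hopf subalgebras (respectively, among Hopf subalgebras with bijective antipode) in $\cZ(\sA)$ is then immediate since any such object is in particular a sub-bialgebra of $\cZ(\sA)$ and therefore contained in $\hz = \sC$.

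The only step where there is any real content is the rigidity-to-antipode inference; everything else is bookkeeping on top of the previously established lemma and of Schauenburg's dictionary. I would either invoke \cite{schau_tann} directly for that inference, or, if a self-contained argument is preferred, note that for $\sV \in \cC$ left rigid with dual $\sV^*\in\cC$, the usual formula $S(v_{(1)})(v_{(0)}) = \mathrm{ev}\circ (\mathrm{id}_{\sV^*}\otimes \rho_\sV)$ expresses $S$ applied to matrix coefficients of $\sV$ in terms of matrix coefficients of $\sV^*$, so $S(\sC) \subseteq \sC$.
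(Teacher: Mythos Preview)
Your proposal is correct and follows essentially the same route as the paper: the argument there is given in the paragraph immediately preceding the theorem statement (hence the \verb|\qedhere|), and it likewise invokes \cite[Lemma~2.2.12]{schau_tann} to obtain the sub-bialgebra $\sC$, uses \Cref{le.center_tann_aux} for $\sC\subseteq\cZ(\sA)$, and appeals to Tannaka reconstruction for maximality. Your treatment of the Hopf-algebra/bijective-antipode upgrade via rigidity is more explicit than the paper's (which leaves it implicit in the preceding lemma), but the content is the same.
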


Finally, we have the following alternative characterization of the Hopf subalgebra $\hz\subseteq \sA$ from \Cref{th.center_tann}.

\begin{theorem}\label{th.alt_char}
Let $\sA$ be an arbitrary Hopf algebra. The largest Hopf subalgebra $\hz\subseteq \sA$ contained in the center $\cZ$ of $\sA$ coincides with the set
  \begin{equation*}
    \sM =\{x\in \sA: \Delta(x) \in \cZ\otimes \sA\} 
  \end{equation*}
\end{theorem}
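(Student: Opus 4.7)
The easy direction $\hz \subseteq \sM$ follows immediately from the fact that $\hz$ is a Hopf subalgebra contained in $\cZ$: we have $\Delta(\hz) \subseteq \hz \otimes \hz \subseteq \cZ \otimes \sA$. For the reverse direction, my plan is to show that $\sM$ is itself a sub-bialgebra of $\sA$ contained in $\cZ$, and then invoke \Cref{th.center_tann}. The easy structural facts come first: applying $\id \otimes \varepsilon$ to $\Delta(x) \in \cZ \otimes \sA$ shows $x \in \cZ$, so $\sM \subseteq \cZ$; and since $\cZ \otimes \sA$ is a subalgebra of $\sA \otimes \sA$ and $\Delta$ is multiplicative, $\sM$ is a subalgebra containing $1$.

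The heart of the argument will be the key step: for $x \in \sM$, $\Delta(x) \in \cZ \otimes \cZ$. The point is that $\Delta(x)$ commutes with $\sA \otimes 1$ by definition of $\sM$, and with $\Delta(\sA)$ because $x \in \cZ$ gives $\Delta([a,x]) = 0$. Then the identity
\[
1 \otimes a \;=\; \sum \bigl(S(a_{(1)}) \otimes 1\bigr)\,\Delta(a_{(2)}),
\]
valid in an arbitrary Hopf algebra, shows that $1 \otimes \sA$ lies in the subalgebra of $\sA \otimes \sA$ generated by $\sA \otimes 1$ and $\Delta(\sA)$. Consequently $\Delta(x)$ also commutes with $1 \otimes \sA$, i.e., $\Delta(x) \in \sA \otimes \cZ$; combined with $\Delta(x) \in \cZ \otimes \sA$ this gives $\Delta(x) \in \cZ \otimes \cZ$.

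To obtain $\Delta(\sM) \subseteq \sM \otimes \sM$, I would pick a minimal decomposition $\Delta(x) = \sum_\alpha u_\alpha \otimes v_\alpha$ with both $\{u_\alpha\}$ and $\{v_\alpha\}$ linearly independent (so $u_\alpha, v_\alpha \in \cZ$ by the key step). Coassociativity gives
\[
\sum_\alpha \Delta(u_\alpha) \otimes v_\alpha \;=\; \sum_\alpha u_\alpha \otimes \Delta(v_\alpha) \;\in\; \cZ \otimes \sA \otimes \sA,
\]
and linear independence of $\{v_\alpha\}$ lets me extract $\Delta(u_\alpha) \in \cZ \otimes \sA$, i.e., $u_\alpha \in \sM$. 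Applying the key step to each $u_\alpha \in \sM$ then lifts the display to an element of $\cZ \otimes \cZ \otimes \sA$, and linear independence of $\{u_\alpha\}$ extracts $\Delta(v_\alpha) \in \cZ \otimes \sA$, so $v_\alpha \in \sM$ as well. Hence $\sM$ is a sub-bialgebra of $\sA$ contained in $\cZ$, and \Cref{th.center_tann} delivers $\sM \subseteq \hz$.

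The main obstacle I anticipate is the key identity: recognizing that $\sA \otimes 1$ together with $\Delta(\sA)$ generate $\sA \otimes \sA$ as an algebra via the antipode is what replaces the bijectivity-of-$S$ argument underlying \Cref{main}. Once this step is in hand, the remainder is routine bookkeeping with coassociativity.
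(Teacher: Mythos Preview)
Your proof is correct, and it takes a somewhat different path from the paper's.

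For the key step $\Delta(\sM)\subseteq\cZ\otimes\cZ$, the paper refers back to the adjoint-action computation in the proof of \Cref{main}: one shows $(\id\otimes\ad_x)(\Delta(y))=\varepsilon(x)\Delta(y)$ using $y_{(1)}\in\cZ$ and $y\in\cZ$. Your argument replaces this with the purely algebraic observation that $\sA\otimes 1$ and $\Delta(\sA)$ generate $\sA\otimes\sA$ via the identity $1\otimes a=\sum(S(a_{(1)})\otimes 1)\Delta(a_{(2)})$, so that $\Delta(x)$, which commutes with both generating sets, is central in $\sA\otimes\sA$. This is a clean alternative that makes transparent why no antipode bijectivity is needed.

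For the conclusion, the paper does \emph{not} show that $\sM$ is a sub-bialgebra. Instead it observes that $\sM$ is a right $\sA$-comodule (via coassociativity) whose coefficient coalgebra lies in $\cZ$, and then invokes \Cref{le.center_tann_aux} to conclude that $\sM$ is a central comodule, hence an $\hz$-comodule; applying $\varepsilon\otimes\id$ to $\Delta(x)\in\sM\otimes\hz$ finishes. Your route---establishing $\Delta(\sM)\subseteq\sM\otimes\sM$ directly via a minimal tensor decomposition and coassociativity, and then appealing to the maximality statement in \Cref{th.center_tann}---is more hands-on but equally valid. The paper's approach buys a shorter argument by leaning more heavily on the Tannakian machinery; yours buys a more self-contained verification of the bialgebra structure of $\sM$.
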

\begin{proof}
  Clearly, $\hz$ is contained in $\sM$; the interesting inclusion is the
  opposite one. 

Let $x\in \sM$. Since 
\begin{equation*}
  ((\Delta\otimes\id)\circ\Delta)(x) =
  ((\id\otimes\Delta)\circ\Delta)(x) \in \cZ\otimes \sA\otimes \sA, 
\end{equation*}
we have $\Delta(x)\in \sM\otimes \sA$, and hence $\sM$ can be regarded as a
right $\sA$-comodule. We can moreover check that $\Delta(\sM)\subseteq
\cZ\otimes \cZ$ just as in the proof of \Cref{main}, and hence the
coefficients of $\sM$, as a right $\sA$-comodule, lie in $\cZ$. Let us emphasize that at this stage of the proof of \Cref{main} we do not use invertibility of $S$.  

It now follows from \Cref{le.center_tann_aux} that $\sM$ is central and hence must be an
$\hz$-comodule. Finally, the conclusion follows by applying
$\varepsilon\otimes\id$ to $\Delta(x)\in \sM\otimes\hz$ for $x\in \sM$. 
\end{proof}

\begin{remark}
  We could just as easily carried out the discussion for left
  comodules. Since the characterization of $\hz$ as the largest
  sub-bialgebra contained in the center is blind to this distinction,
  we could have substituted $A\otimes \cZ$ for $\cZ\otimes A$ in
  \Cref{th.alt_char}. Thus, we can avoid antipode bijectivity in \Cref{main}. 
\end{remark}


\section{Hopf cocenter} 
In \cite[Section 2]{AD1} the following notion is considered:
\begin{definition}\label{def.hopf_cocentral_morphism} 
Let $\pi:\sA\rightarrow \sB $ be a morphism of Hopf algebras. We say that $\pi$ is \define{cocentral} if \[(\id\otimes\pi)\circ\Delta = (\id\otimes\pi)\circ\Delta^\op\]
\end{definition}
 
In \cite[Lemma 2.3.7]{AD1} it was proved that there exists a  quotient Hopf algebra  $\hc(\sA)$ of $\sA$ such that all cocentral morphisms from $\sA$ factor uniquely through $\hc(\sA)$; moreover, $\hc(\sA)$ is automatically cocommutative. Strictly speaking, in loc. cit. Hopf algebras are always assumed to have bijective antipode. Nevertheless, the existence of $\hc(\sA)$ for arbitrary Hopf algebras follows from its existence for Hopf algebras with bijective antipode. 

Indeed, note that a cocentral morphism $\sA\to \sB$ factors through the universal involutive quotient $\overline{\sA} = \sA/(S^2-\id)$ of $\sA$. This means that the quotient $\sA\to \overline{\sA}\to \hc(\overline{\sA})$ has the universal property defining $\hc(A)$.   

With this in place, we can now give

\begin{definition}
  The \define{cocenter} $\hc(\sA)$ of a Hopf algebra $\sA$ is the quotient Hopf algebra $\sA\to\hc(\sA)$ that is universal among cocentral quotients of $\sA$. 
\end{definition}
A similar concept for cosemisimple Hopf algebras with the assumption that $\hc(\sA)$ is cosemisimple was introduced in \cite{chi_cos}. The following result shows that the two coincide when they both make sense. 

\begin{theorem}\label{th.an=ch}
If $\sA$ is a cosemisimple Hopf algebra, then its cocenter $\hc(\sA)$ is a group algebra.   
\end{theorem}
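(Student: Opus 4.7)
The plan is to establish two things about $\hc(\sA)$: that it is cocommutative, and that it is cosemisimple. Combined, these force $\hc(\sA)$ to be a group algebra by the coalgebra observation recorded in the Preliminaries, namely that over the algebraically closed field $\sk$ every cosemisimple cocommutative coalgebra is a direct sum of one-dimensional simple subcoalgebras, hence a group coalgebra on its set of group-like elements.

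Cocommutativity is already on record: it was noted immediately after \Cref{def.hopf_cocentral_morphism} that the universal cocentral quotient $\hc(\sA)$ is automatically cocommutative. So the work is in showing cosemisimplicity of $\hc(\sA)$. For this I would prove the more general statement that any Hopf algebra (in fact any coalgebra) quotient of a cosemisimple Hopf algebra is cosemisimple. Writing $\pi\colon\sA\to\hc(\sA)$ for the canonical surjection and decomposing the coradical $\sA=\bigoplus_{\alpha} C_\alpha$ of the cosemisimple $\sA$ into simple subcoalgebras, one checks that for each $\alpha$ the restriction $\pi|_{C_\alpha}$ has kernel a subcoalgebra of the simple $C_\alpha$, hence is either zero or injective. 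Thus $\pi(C_\alpha)$ is either zero or again a simple subcoalgebra of $\hc(\sA)$, and consequently $\hc(\sA)=\pi(\sA)$ is a sum of simple subcoalgebras, i.e.\ it coincides with its own coradical and is cosemisimple.

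Having both properties in hand, the Preliminaries observation gives $\hc(\sA)=\bigoplus_{g\in G}\sk\cdot g$ as a coalgebra, where $G$ is the set of group-likes and each $g$ satisfies $\Delta(g)=g\otimes g$, $\varepsilon(g)=1$. To conclude that $\hc(\sA)=\sk[G]$ as a \emph{Hopf} algebra, I would note that since $\Delta$ is an algebra map, the product $gh$ is again group-like, so $G$ is closed under multiplication; the antipode axiom $m(S\otimes\id)\Delta(g)=\varepsilon(g)1$ gives $S(g)=g^{-1}$, so $G$ is a group. This identifies $\hc(\sA)$ with the group algebra $\sk[G]$.

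The one step that requires any care is the quotient-cosemisimplicity argument; everything else is assembling already-recorded facts. The main potential pitfall is the temptation to lift $\hc(\sA)$-comodules along $\pi$ to $\sA$-comodules (which is not possible in general since $\pi$ is a surjection and not an injection). Working directly at the level of subcoalgebras and the coradical bypasses this issue cleanly.
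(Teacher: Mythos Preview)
Your cosemisimplicity argument contains a genuine error. The kernel of a coalgebra map $\pi|_{C_\alpha}$ is a \emph{coideal} of $C_\alpha$, not a subcoalgebra: from $(\pi\otimes\pi)\Delta=\Delta\pi$ one only obtains $\Delta(\ker\pi)\subseteq \ker\pi\otimes C_\alpha + C_\alpha\otimes\ker\pi$. Simple coalgebras have many nontrivial coideals (dually, the matrix algebra $M_n(\sk)$ has many proper unital subalgebras), so $\pi|_{C_\alpha}$ need not be zero or injective. Concretely, take the $2\times 2$ matrix coalgebra with comatrix units $e_{ij}$ and let $I=\sk\,e_{12}$; this is a coideal, and the quotient is the three-dimensional coalgebra with grouplikes $\bar e_{11},\bar e_{22}$ and a nonzero $(\bar e_{22},\bar e_{11})$-skew-primitive $\bar e_{21}$, which is \emph{not} cosemisimple. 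So the parenthetical ``in fact any coalgebra'' strengthening is false, and the argument as written does not establish cosemisimplicity of $\hc(\sA)$.

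The paper avoids this issue by not attempting a general preservation statement. Instead it uses cocentrality directly: for any cocentral $\pi:\sA\to\sB$ and any $\varphi\in\sB^*$, the induced action of $\varphi$ on an $\sA$-comodule is an $\sA$-comodule endomorphism. On a simple $\sA$-comodule $\sV$ Schur's lemma then forces $\varphi\triangleright v=\varphi(g_\sV)v$ for some grouplike $g_\sV\in\sB$, and when $\pi$ is onto these grouplikes generate $\sB$. This simultaneously yields cocommutativity and cosemisimplicity of $\hc(\sA)$ in one stroke---but the work is done by cocentrality and the semisimplicity of $\cM^\sA$, not by any inheritance property of coalgebra quotients.
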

\begin{proof}
  According to the proof of \cite[Proposition 2.4]{chi_coc}, a Hopf algebra map $\pi:\sA\to  \sB $ is cocentral if and only if for any functional $\varphi\in  \sB ^*$ the action of $\varphi$ induced by $\pi$ on any (right, say) $\sA$-comodule is an endomorphism in $\cM^{\sA}$. 

We can now continue as in the proof in loc. cit. When $\sA$ is cosemisimple the above condition is equivalent to saying that for every simple $\sA$-comodule $V$, the action
\begin{equation*}
  \triangleright:\sB^*\otimes \sV\to \sV
\end{equation*}
of $ \sB ^*$ on $\sV$ is of the form
\begin{equation}\label{eq.gplike}
  \varphi\triangleright v = \varphi(g)v,\quad \forall \varphi\in  \sB ^*,\ v\in V
\end{equation}
for some grouplike element $g\in  \sB $. 

If furthermore $\sA\to  \sB $ is surjective (which is the case for $ \sB =\hc(\sA)$), then $ \sB $ will be generated by the grouplikes from \Cref{eq.gplike}. This proves the claim. 
\end{proof}

In \cite{chi_coc} an interpretation was given in the cosemisimple case for the group whose group algebra $\hc(\sA)$ is. In order to adapt that discussion to the present, general setting, we need the following notions. First, we recall from \cite[$\S$ 5.1]{ComOst11} the concept of block in an abelian category (applied in our case to categories of comodules).

\begin{definition}\label{def.blocks}
  The \define{blocks} of an abelian category are the classes of the weakest equivalence relation on the set of indecomposable objects generated by the requiring that two objects are equivalent if there are non-zero morphisms between them. 
\end{definition}

\begin{definition}\label{def.univ_grading}
The \define{universal grading group} $\G(\sA)$ of a Hopf algebra $\sA$ is the group with one generator $g_\sV$ for each simple  $\sV\in\cM^{\sA}$  subject to the following relations 
\begin{itemize}
   \item $g_\sV$ only depends on the block of the category $\cM^{\sA}$ of $\sA$-comodules that $\sV$ belongs to;
   \item $g_\sk=1$, where $\sk\in \cM^{\sA}$ denotes the trivial comodule;
   \item For simple $\sA$-comodules $\sV$ and $\sW$, the product $g_\sV g_\sW$ equals $g_\sX$ for any simple subquotient $\sX$ of $\sV\otimes \sW$. 
\end{itemize}
\end{definition}
\begin{remark}
  Note that it follows from the definition that
  $g_{\sV^*}=g_\sV^{-1}$, and similarly for the right dual ${}^*\sV$
  when the antipode is bijective (the bijectivity of the antipode is
  necessary in order to define a coaction on ${}^*\sV$ that makes it
  into a right dual).
\end{remark}

\begin{remark}
  The universal grading group is the {\it chain group} of \cite{BL,Mu}.
\end{remark}

We always have a Hopf algebra map $\sA\to \sk\G(\sA)$ from a Hopf algebra to the group algebra of its universal grading group: at the level of comodule categories, the corresponding functor $\cM^{\sA}\to \cM^{\sk\G(\sA)}$ sends an indecomposable $\sA$-comodule $\sV$ to the direct sum of $\dim(\sV)$ copies of the $\sk\G(\sA)$-comodule $\sk g$, where $g$ is the element of $\G(\sA)$ corresponding to the simple socle of $\sV$ (recall that the {\it socle} of a comodule is the sum of its simple subcomodules). 

When $\sA$ is cosemisimple $\sA\to \sk\G(\sA)$ can be identified with $\sA\to \hc(\sA)$ (see \cite{chi_coc}). In general, we again get a cocentral Hopf algebra quotient, smaller than $\hc(\sA)$.

\begin{definition}
  The \define{group cocenter} of $\sA$ is the quotient $\sA\to \sk\G(\sA)$ from the above discussion. 
We will sometimes also write $\hgc(\sA)$ for $\sk\G(\sA)$.  
\end{definition}

The following result shows that the group cocenter always fits into an exact sequence. 

\begin{theorem}\label{hgc_sequence}
  The group cocenter $\hgc$ of $\sA$ fits into an exact sequence 
  \begin{equation*}
    k\to  \sC \to \sA\to \hgc\to k
  \end{equation*}
of Hopf algebras, with $\sA$ faithfully flat over $ \sC $ and faithfully coflat over $\hgc$. 
\end{theorem}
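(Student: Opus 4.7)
The plan is to realize $\sC$ as the coinvariants of the group cocenter projection $\pi : \sA \twoheadrightarrow \hgc = \sk\Gamma(\sA)$ and verify the four conditions of \Cref{esdef} together with the two (co)flatness assertions.

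First I would record some formal properties of $\pi$. As a group algebra, $\hgc$ is cosemisimple, so by \Cref{re.coflat=inj} the surjection $\pi$ is faithfully coflat on both sides. Since $\pi$ factors through $\hc(\sA)$, it is cocentral, and a brief Sweedler computation using $(\id \otimes \pi)\Delta = (\id \otimes \pi)\Delta^{\op}$ shows that $(\pi \otimes \id)\Delta(x) = \tau \circ (\id \otimes \pi)\Delta(x)$ for every $x \in \sA$. Hence the left and right $\pi$-coinvariants coincide, i.e. $\pi$ is normal in the sense of \Cref{remex}, and
\[
\sC := \{x \in \sA \mid (\id \otimes \pi)\Delta(x) = x \otimes \I\}
\]
is a well-defined subspace independent of side. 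A standard argument — normality together with $\Delta \circ S = (S \otimes S) \circ \Delta^{\op}$ — shows $\sC$ is a Hopf subalgebra of $\sA$. With this setup, conditions (1) and (2) of \Cref{esdef} are immediate, (4) holds by the very definition of $\sC$ together with normality, and (3) drops out of the converse part of \Cref{remex}, using normality and faithful coflatness. This establishes exactness of the claimed sequence.

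The remaining substantive assertion is faithful flatness of $\sA$ over $\sC$, and this is where I expect the main obstacle to lie. I would invoke a Schneider-type theorem from the theory of Hopf algebra extensions: whenever $\pi : \sA \to \sB$ is a Hopf algebra surjection onto a cosemisimple Hopf algebra with bijective antipode, $\sA$ is faithfully flat as a left (and right) module over $\sA^{\mathrm{co}\,\sB}$. Both hypotheses on $\sB$ are automatic for the group algebra $\hgc$, so this concludes the proof. Should a more explicit argument be preferred, one can exploit the $\Gamma$-grading $\sA = \bigoplus_{g\in\Gamma} \sA_g$ induced by $\pi$ (with $\sA_1 = \sC$) and show that it is \emph{strong}, equivalently that $\pi$ admits a unital, convolution-invertible, $\sk\Gamma$-colinear section built grouplike by grouplike from surjectivity of $\pi$ and the pointedness of $\sk\Gamma$; this yields a crossed-product presentation $\sA \cong \sC \# \sk\Gamma$ of $\sA$ as a $\sC$-module, from which freeness (hence faithful flatness) is immediate. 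In sum, exactness and faithful coflatness come essentially for free from cocentrality of $\pi$ and cosemisimplicity of $\hgc$, while faithful flatness requires the additional, classical input from Hopf–Galois / extension theory.
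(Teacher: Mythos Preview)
Your argument is essentially the paper's, with one substantive difference in how faithful flatness is obtained. The paper defines $\sC$ as the $\pi$-coinvariants, notes that $\sC$ is a Hopf subalgebra because $\pi$ is cocentral, gets faithful coflatness from cosemisimplicity of $\hgc$ exactly as you do, and then invokes \cite[Theorem~2]{tak} to deduce faithful flatness of $\sA$ over $\sC$ directly from faithful coflatness over $\hgc$ (this result simultaneously yields condition~(3), so exactness and flatness come in one stroke). You instead derive condition~(3) from \Cref{remex} and then appeal to Schneider's theorem for faithful flatness; this is a legitimate and closely related route, though note that \Cref{remex} as stated carries a standing bijective-antipode hypothesis you have not checked, whereas Takeuchi's theorem does not.

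One caution about your ``more explicit'' alternative: surjectivity of $\pi$ and pointedness of $\sk\Gamma$ do let you pick a $\sk\Gamma$-colinear set-theoretic section $g\mapsto a_g\in\sA_g$, but convolution invertibility of such a section amounts to each $a_g$ being a unit in $\sA$, which is not guaranteed in general. So the crossed-product/strong-grading argument does not go through without further hypotheses; stick with Schneider (or, more simply, with Takeuchi as the paper does).
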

\begin{proof}
 Denote the surjection $\sA\to \hgc$ by $\pi$ and let
 \begin{equation*}
    \sC =\{x\in \sA\ |\ (\pi\otimes\id)(\Delta(x))=1\otimes x\}.
 \end{equation*}
 Since $\pi$ is central, $\sC$ is a Hopf algebra. 
Using  \cite[Theorem 2]{tak}, we obtain  faithful flatness of $\sA$ over $ \sC $ given faithful coflatness of $\sA$ over $\hgc$; this latter condition holds simply because $\pi$ is a surjection onto a cosemisimple Hopf algebra, and hence automatically faithfully coflat (see \Cref{re.coflat=inj}). 
\end{proof}

Now let $\sA$ be a Hopf algebra  with bijective antipode  and denote by $\hc$ and $\hgc$ its Hopf cocenter and group cocenter respectively. By the universality of $\sA\to \hc$, the surjection $\sA\to \hgc$ factors as
\begin{equation}\label{eq.factorize}
  \begin{tikzpicture}[auto,baseline=(current  bounding  box.center)]
    \path[anchor=base] (0,0) node (1) {$\sA$} +(4,0) node (2) {$\hgc$} +(2,-1) node (3) {$\hc$};
         \draw[->] (1) to  (2);     
         \draw[->] (1) to[bend right=10]  node[left,pos=.55] {}  (3);
         \draw[->] (3) to[bend right=10]  node[right,pos=.2,swap] {$\phantom{i}\pi$}  (2);
  \end{tikzpicture}
\end{equation}  
for a Hopf algebra surjection $\pi$. Note that $\pi$ restricts to a surjection of the group $\sG(\hc)$ of grouplikes in $\hc$ onto $\G=\G(\sA)=\sG(\hgc)$ (e.g. by \cite[Proposition 4.1.7]{rad_book}). The next result says, essentially, that this surjection is in fact also one-to-one.

\begin{proposition}\label{pr.cocentral_gplikes}
  The surjection $\pi$ from \Cref{eq.factorize} is split as in the diagram
\begin{equation}\label{eq.split}
  \begin{tikzpicture}[auto,baseline=(current  bounding  box.center)]
    \path[anchor=base] (0,0) node (1) {$\hc$} +(3,0) node (2) {$\hgc$};
         \draw[->] (1) to[bend left=20] node[pos=.5] {$\pi$}  (2);     
         \draw[->] (2) to[bend left=20]  node[pos=.5] {$\iota$}  (1);
  \end{tikzpicture}
\end{equation}
 by an inclusion $\iota$ of Hopf algebras.  
\end{proposition}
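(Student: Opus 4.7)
The plan is to construct $\iota$ via the universal property of $\G$, by producing a group homomorphism $\iota_0:\G\to\sG(\hc)$ sectioning $\pi|_{\sG(\hc)}$. The key input is the characterization of cocentrality recalled in the proof of \Cref{th.an=ch}: a Hopf map $\sA\to\sB$ is cocentral if and only if every $\varphi\in\sB^*$ acts on every right $\sA$-comodule by an endomorphism in $\cM^\sA$. Applied to the (cocentral) surjection $\sA\twoheadrightarrow\hc$: for simple $\sV\in\cM^\sA_f$, Schur's lemma gives $\End_{\cM^\sA}(\sV)=\sk$, so every $\varphi\in\hc^*$ acts on $\sV$ as a scalar $\chi_\sV(\varphi)$. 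Since composition of actions matches multiplication on $\hc^*$, the map $\chi_\sV:\hc^*\to\sk$ is an algebra homomorphism, i.e.\ evaluation at a grouplike $h_\sV\in\sG(\hc)$. Equivalently, the pushforward of $\sV$ to $\cM^\hc$ is the isotypic $\hc$-comodule $\sk_{h_\sV}^{\oplus\dim\sV}$.

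Next I would verify that the assignment $\sV\mapsto h_\sV$ obeys the relations from \Cref{def.univ_grading}. Block-invariance: if an indecomposable $\sU\in\cM^\sA_f$ has $\sV$ and $\sW$ as composition factors, then $\End_{\cM^\sA}(\sU)$ is local with residue field $\sk$, so every $\sA$-endomorphism of $\sU$ induces the same scalar on every composition factor; propagating through zig-zags of non-zero morphisms between indecomposables then forces $h_\sV=h_\sW$ whenever $\sV,\sW$ share a block. The identity $h_\sk=1$ is clear, and multiplicativity follows from the monoidality of pushforward, which makes every simple subquotient $\sX\subseteq\sV\otimes\sW$ have weight $h_\sV h_\sW$. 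By the presentation of $\G$, this extends to a group homomorphism $\iota_0:\G\to\sG(\hc)$, yielding a Hopf algebra map
\[\iota:\sk\G=\hgc\longrightarrow\sk\sG(\hc)\hookrightarrow\hc,\]
where the second arrow is a Hopf subalgebra inclusion because $\hc$ is pointed: cocommutativity over the algebraically closed $\sk$ forces every simple subcoalgebra to be one-dimensional.

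To verify $\pi\circ\iota=\id_{\sk\G}$, I would compare two descriptions of the composite $\sA\to\hc\xrightarrow{\pi}\sk\G$: by the explicit recipe for $\sA\to\sk\G$ given in the paragraph preceding \Cref{hgc_sequence} (sending each simple $\sV$ to $\sk_{g_\sV}^{\oplus\dim\sV}$), versus reading it through its first leg $\sA\to\hc$, which sends $\sV$ to $\sk_{h_\sV}^{\oplus\dim\sV}$. The two agree only if $\pi(h_\sV)=g_\sV$, giving $\pi\circ\iota_0=\id_\G$. Injectivity of $\iota$ is automatic from $\pi\iota=\id$. The main obstacle is the block-invariance step for $\sV\mapsto h_\sV$, which rests on the Krull--Schmidt / local-endomorphism-ring structure of indecomposable comodules and careful naturality bookkeeping along zig-zag chains; everything else reduces to a diagram chase.
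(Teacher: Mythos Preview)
Your approach is essentially the same as the paper's: construct a section $\iota_0:\G\to\sG(\hc)$ by assigning to each simple $\sV$ the grouplike $h_\sV\in\sG(\hc)$ that governs the $\hc^*$-action, and then verify the relations of \Cref{def.univ_grading}. The paper does exactly this, only more tersely (it phrases the conclusion as ``$\pi$ is an isomorphism on grouplikes'' rather than ``$\iota_0$ is a section'', which amounts to the same thing).

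One small imprecision worth flagging: your block-invariance step invokes the claim that ``every $\sA$-endomorphism of an indecomposable $\sU$ induces the same scalar on every composition factor''. An arbitrary endomorphism of $\sU$ need not preserve a composition series, so it does not literally \emph{induce} anything on composition factors. What makes the argument work is not the local endomorphism ring per se but the \emph{naturality} of the $\hc^*$-action: since each $\varphi\in\hc^*$ acts by an $\sA$-comodule endomorphism compatible with all morphisms in $\cM^\sA$, it preserves every subcomodule and hence every composition series, acting on each simple factor $\sV_i$ by the scalar $\chi_{\sV_i}(\varphi)$. Writing $\varphi|_\sU=\lambda\cdot\id+n$ with $n$ nilpotent then forces $\chi_{\sV_i}(\varphi)=\lambda$ for every $i$ (a nilpotent scalar on a simple object is zero). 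This is what the paper means by ``$\hc^*$ acts by $\sA$-comodule maps''. With that clarification your zig-zag propagation goes through, and the rest of your argument (multiplicativity via monoidality of pushforward, and the verification of $\pi\circ\iota=\id$) is correct and somewhat more explicit than the paper's.
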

\begin{proof}
  All we have to show is that, as in the discussion preceding the statement, $\pi$ induces an isomorphism at the level of grouplikes. In other words, we need to prove that the grouplikes of $\hc$ satisfy the relations listed in \Cref{def.univ_grading}.

As in the proof of \Cref{th.an=ch}, every grouplike element $g\in \sG(\hc)$ induces an action of $\hc^*$ on every $\sA$-comodule via the character
\begin{equation*}
  \varphi\mapsto \varphi(g): \hc^*\to \sk. 
\end{equation*}
This character must be the same for comodules belonging to the same block because $\hc^*$ acts by $\sA$-comodule maps, hence the first bullet point in \Cref{def.univ_grading}.

The second bullet point in that definition clearly holds for the group $\sG(\hc)$. 

Finally, the third bullet point of \Cref{def.univ_grading} holds because $\hc$ is a Hopf algebra coacting on irreducible $\sA$-comodules via the grouplikes $\sG(\hc)$, and tensor products of comodules correspond to products of coalgebra coefficients with respect to the $\hc$ coaction.  
\end{proof}


\section{ Hopf (co)center and   adjoint coaction} 
Let $\sA$ be a Hopf algebra and $\cad:\sA\rightarrow \sA\otimes \sA$ the adjoint coaction  as defined in \Cref{aldef}.  In this section we shall describe the links between 
\begin{itemize}
\item the adjoint coaction $\cad$ and the Hopf center $\hz(\sA)$,
\item the adjoint coaction $\cad$ and the Hopf cocenter $\hc(\sA)$.
\end{itemize} 
 In what follows we shall provide necessary and sufficient conditions for $\cad:\sA\to \sA\otimes \sA$ to be an algebra homomorphism. Clearly, this   is the case if $\sA$ is commutative. 
\begin{theorem}Let $\sA$ be a Hopf algebra and $\cad:\sA\rightarrow \sA\otimes \sA$ the adjoint coaction  \Cref{aldef}.
Then  $\cad$ is an algebra homomorphism if and only if 
\[\cad(\sA)\subset \sA\otimes \cZ(\sA)\] 
\end{theorem}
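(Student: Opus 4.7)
The plan is to prove the two implications separately, with the forward implication being a direct Sweedler computation and the backward implication requiring a more delicate extraction.

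For the implication $\cad(\sA)\subseteq \sA\otimes \cZ(\sA)\Rightarrow\cad$ is an algebra map: I would expand both sides using the formula $\cad(x)=x_{(2)}\otimes S(x_{(1)})x_{(3)}$ together with $\Delta^{(2)}(xy)=\Delta^{(2)}(x)\Delta^{(2)}(y)$ and the antialgebra property of $S$. A direct calculation gives
\[
\cad(xy)=x_{(2)}y_{(2)}\otimes S(y_{(1)})\,S(x_{(1)})x_{(3)}\,y_{(3)},\qquad \cad(x)\cad(y)=x_{(2)}y_{(2)}\otimes S(x_{(1)})x_{(3)}\,S(y_{(1)})\,y_{(3)},
\]
so the discrepancy is $x_{(2)}y_{(2)}\otimes [S(y_{(1)}),\,S(x_{(1)})x_{(3)}]\,y_{(3)}$. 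Under the hypothesis the second-leg factor $S(x_{(1)})x_{(3)}$ lies in $\cZ(\sA)$, hence commutes with $S(y_{(1)})$, so the bracket vanishes. The unit condition $\cad(1)=1\otimes 1$ is immediate from the antipode axiom, completing this direction.

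For the converse, my approach is to introduce, for each $b\in\sA$, the linear map
\[
\delta_b:\sA\to\sA\otimes\sA,\qquad \delta_b(x):=(1\otimes b)\,\cad(x)-\cad(x)\,(1\otimes b)=\sum x_{(2)}\otimes [b,\,S(x_{(1)})x_{(3)}].
\]
The centrality condition $\cad(\sA)\subseteq\sA\otimes\cZ(\sA)$ is exactly the statement that $\delta_b\equiv 0$ for every $b$. Assuming $\cad$ is an algebra map, a short manipulation shows that $\delta_b$ is a twisted derivation:
\[
\delta_b(xy)=\delta_b(x)\,\cad(y)+\cad(x)\,\delta_b(y),
\]
obtained by inserting and subtracting $\cad(x)(1\otimes b)\cad(y)$ inside $\delta_b(xy)$. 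This reduces the problem to showing $\delta_b$ vanishes on a generating set.

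The main obstacle is then to carry out a Sweedler decoupling on the hom identity
\[
\sum x_{(2)}y_{(2)}\otimes [S(y_{(1)}),\,S(x_{(1)})x_{(3)}]\,y_{(3)}=0 \qquad (\ast)
\]
in order to extract $\sum x_{(2)}\otimes[b,\,S(x_{(1)})x_{(3)}]=0$ for arbitrary $b\in\sA$. The Sweedler components $y_{(1)},y_{(2)},y_{(3)}$ are tangled across both legs of $(\ast)$, so one cannot simply set $y_{(1)}=b,\,y_{(2)}=y_{(3)}=1$. My plan is to apply (in the bijective-antipode case) the substitution $y\mapsto S^{-1}(b)$ and then use the antipode-contraction identities
\[
\sum y_{(1)}\otimes y_{(2)}S(y_{(3)})=y\otimes 1,\qquad \sum S^{-1}(y_{(2)})\otimes y_{(3)}S^{-1}(y_{(1)})=S^{-1}(y)\otimes 1,
\]
proved by checking on grouplikes and primitives and extending by the derivation rule, to collapse the $y_{(1)},y_{(2)}$ contractions and leave only $b$ in the commutator slot. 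The main technical point, and the expected obstacle, is verifying that this sequence of substitutions and antipode contractions lands precisely on the desired commutator identity rather than some permuted variant involving $S(x_{(3)})x_{(1)}$ in place of $S(x_{(1)})x_{(3)}$; if a permuted identity is obtained, I would combine it with the derivation property of $\delta_b$ and the base cases on grouplikes and primitives (where $\cad(g)=g\otimes 1$ and $\cad(p)=p\otimes 1$, so $\delta_b$ trivially vanishes) to conclude.
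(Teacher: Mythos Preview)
Your treatment of the implication $\cad(\sA)\subset\sA\otimes\cZ(\sA)\Rightarrow\cad$ multiplicative is correct and matches what the paper dismisses as ``clear''. The substantive content of the theorem is the converse, and here your proposal has genuine gaps.

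First, the second of your two ``antipode-contraction identities'' is simply false. Replacing $y$ by $S(z)$ in
\[
S^{-1}(y_{(2)})\otimes y_{(3)}S^{-1}(y_{(1)})\stackrel{?}{=}S^{-1}(y)\otimes 1
\]
and using $\Delta^{(2)}(S(z))=S(z_{(3)})\otimes S(z_{(2)})\otimes S(z_{(1)})$ turns it into $z_{(2)}\otimes S(z_{(1)})z_{(3)}=z\otimes 1$, i.e.\ $\cad(z)=z\otimes 1$ for every $z$, which is exactly what you are trying to \emph{use} the hypothesis to constrain, not a general fact. So the contraction step you outline cannot be carried out as stated.

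Second, your fallback plan (``combine with the derivation property of $\delta_b$ and the base cases on grouplikes and primitives'') does not work for an arbitrary Hopf algebra: $\sA$ need not be generated as an algebra by its grouplikes and primitives. That reduction is only available in special situations (e.g.\ pointed coradically-graded Hopf algebras), and the theorem makes no such assumption. Your justification ``proved by checking on grouplikes and primitives and extending by the derivation rule'' for the identities themselves is likewise not a valid proof method here.

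What the paper does instead is conceptually different: it encodes $\Delta$ in the invertible operator $W(a\otimes a')=a_{(1)}\otimes a_{(2)}a'$ on $\sA\otimes\sA$, which satisfies the pentagon-type relation $W_{23}W_{12}W_{23}^{-1}=W_{12}W_{13}$, and encodes the multiplicativity of (the flip of) $\cad$ as the operator identity $W_{12}^{-1}U_{234}W_{12}=U_{134}U_{234}$ for $U=W_{12}^{-1}W_{13}W_{12}$. Manipulating these two operator identities yields the commutation $W_{23}U_{134}=U_{134}W_{23}$, and evaluating on $a\otimes b\otimes 1\otimes 1$ then applying $\varepsilon\otimes\varepsilon$ to the first two legs gives $(b\otimes 1)\cad(a)=\cad(a)(b\otimes 1)$ (in flipped form), which is the desired centrality. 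The point is that the pentagon relation lets one \emph{decouple} the legs that your Sweedler computation leaves entangled; a bare substitution $y\mapsto S^{-1}(b)$ in $(\ast)$ cannot achieve this because the three Sweedler components of $y$ occur in two different tensor factors and on both sides of the commutator.
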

\begin{proof} The map $\cad$ is an algebra homomorphism if and only if its flipped version $\delta:\sA\to \sA\otimes \sA$ is where 
 $\delta(x) = S(x_{(1)})x_{(3)}\otimes x_{(2)} $.
In the course of the proof we shall use notation of the proof of \cite[Theorem 3.3]{BudzKasp}. In particular we define the invertible linear map $W:\sA\otimes\sA\to\sA\otimes\sA$:
\[W  (a\otimes a') =  a_{(1)}\otimes a_{(2)}a'\]   
It is easy to check that 
 \begin{equation}\label{pent}W_{23}W_{12}W_{23}^{-1} = W_{12}W_{13}\end{equation}
Let  us consider a linear map  $U:\sA \otimes \sA\otimes \sA \to\sA \otimes \sA\otimes \sA$  
\[U(a\otimes b\otimes c) = (\id\otimes\delta)(\Delta (a))(\I\otimes b\otimes c)=a_{(1)}\otimes S(a_{(2)})a_{(4)}b\otimes a_{(3)}c\]
for all $a,b,c\in \sA$. 
Then    $U$  
satisfies   
\begin{equation}\label{simeq}W_{12}^{-1}U_{234} W_{12}  = U_{134} U_{234}\end{equation}
For the proof of \eqref{simeq}  see the proof of \cite[Theorem 3.3]{BudzKasp}. Note that this is the place where the assumption that $\cad$ is an algebra homomorphism is used. 

Let us note that 
\begin{equation}\label{Uform}
U = W_{12}^{-1}W_{13}W_{12}
\end{equation} Indeed 
\[\begin{split}
W_{12}^{-1}W_{13}W_{12}(a\otimes b\otimes c)&=W_{12}^{-1}W_{13}(a_{(1)}\otimes a_{(2)}b\otimes c)\\
& = W_{12}^{-1}(a_{(1)}\otimes a_{(3)}b\otimes a_{(2)}c)\\
&=a_{(1)}\otimes S(a_{(2)})a_{(4)}b\otimes a_{(3)}c
\end{split}
\]
Now, using \Cref{Uform} and \Cref{pent} we get 
\[\begin{split} W_{12}^{-1}U_{234} W_{12}&= W_{12}^{-1}W_{23}^{-1}W_{24}W_{23}W_{12} \\&=
W_{23}^{-1}W_{13}^{-1}W_{14}W_{24}W_{13}W_{23}\\&=
W_{23}^{-1}W_{13}^{-1}W_{14}W_{13}W_{24}W_{23}
\end{split}
\]
 On the other hand, again using \Cref{Uform} we get 
\[U_{134} U_{234} = W_{13}^{-1}W_{14}W_{13}W_{23}^{-1}W_{24}W_{23}\] 
Comparing both expressions we conclude that 
\[ W_{13}^{-1}W_{14}W_{13}W_{23}^{-1}=W_{23}^{-1}W_{13}^{-1}W_{14}W_{13}\] 
or equivalently
\begin{equation}\label{eqWU}W_{23}U_{134} = U_{134}W_{23} \end{equation}
Applying both sides of \eqref{eqWU} to $a\otimes b\otimes \I\otimes \I\otimes \I$ we get 
\begin{equation}\label{eqWU1} a_{(1)}\otimes b_{(1)}\otimes b_{(2)}S(a_{(2)})a_{(4)}\otimes a_{(3)} = a_{(1)}\otimes b_{(1)}\otimes S(a_{(2)})a_{(4)} b_{(2)}\otimes a_{(3)}\end{equation}
Applying $\varepsilon\otimes \varepsilon$ to the first two  tensorands of    both sides of \eqref{eqWU1}  we get 
\[(b\otimes\I)(S(a_{(1)})a_{(3)}\otimes a_{(2)})= (S(a_{(1)})a_{(3)}\otimes a_{(2)})(b\otimes\I)\] 
i.e.  $\cad(\sA)\subset  \sA\otimes \mathcal{Z}(\sA)$. 

The converse implication is clear, i.e. if $\cad(\sA)\subset  \sA\otimes \mathcal{Z}(\sA)$ then $\cad$ is an algebra homomorphism and the theorem is proved. 
\end{proof}

Assuming that    $\cad(\sA)\subset  \sA\otimes \mathcal{Z}(\sA)$, or equivalently that  $\cad$ is an algebra homomorphism, and  using  \Cref{easy1} we get:
\begin{corollary}Let $\sA$ be Hopf algebra and $\hz(\sA)$ its Hopf center. 
Then the   adjoint coaction  $\cad $    is an algebra homomorphism if and only if  $\cad(\sA)\subset \sA\otimes \hz(\sA)$. In particular $\cad$ may be viewed as a coaction of  $\hz(\sA)$ on $\sA$.  
\end{corollary}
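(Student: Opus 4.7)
The plan is to deduce the corollary directly from the preceding theorem together with \Cref{easy1}, exploiting the fact that $\cad$ is itself a right $\sA$-comodule structure on $\sA$.

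First I would dispense with the easy implication: if $\cad(\sA)\subset \sA\otimes\hz(\sA)$, then since $\hz(\sA)\subset\cZ(\sA)$ by definition we have $\cad(\sA)\subset \sA\otimes\cZ(\sA)$, and the preceding theorem immediately yields that $\cad$ is an algebra homomorphism.

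For the forward direction, assume $\cad$ is an algebra homomorphism. By the preceding theorem, this gives $\cad(\sA)\subset \sA\otimes\cZ(\sA)$. The key observation is that $\cad:\sA\to \sA\otimes \sA$ is itself a right $\sA$-comodule structure on $\sA$: the coassociativity
\[
(\cad\otimes\id)\circ\cad = (\id\otimes\Delta)\circ\cad
\]
is exactly \Cref{coact}, while the counit identity $(\id\otimes\varepsilon)\circ\cad=\id$ is a short Sweedler calculation. Thus $(\sA,\cad)$ is a right $\sA$-comodule whose coaction lands in $\sA\otimes\cZ(\sA)$, and \Cref{easy1} promotes this to $\cad(\sA)\subset \sA\otimes\hz(\sA)$. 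The final assertion that $\cad$ can be viewed as a coaction of $\hz(\sA)$ on $\sA$ is then immediate: the coassociativity and counit identities hold in $\sA\otimes\hz(\sA)\otimes\hz(\sA)$ and $\sA\otimes\sk$ respectively, simply by restriction.

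There is no real obstacle here; the only subtlety is remembering that applying \Cref{easy1} is legitimate, which depends on $\cad$ being a comodule structure (verified via \Cref{coact}) and on bijectivity of the antipode, which is the running hypothesis needed to invoke \Cref{easy1}.
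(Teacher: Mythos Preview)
Your proposal is correct and matches the paper's own argument essentially verbatim: the paper simply notes that the preceding theorem gives the equivalence with $\cad(\sA)\subset\sA\otimes\cZ(\sA)$ and then invokes \Cref{easy1} to upgrade $\cZ(\sA)$ to $\hz(\sA)$. Your write-up is in fact more explicit, since you spell out why \Cref{easy1} applies (namely that $\cad$ is a genuine right comodule structure via \Cref{coact}) and you flag the bijective-antipode hypothesis that \Cref{easy1} carries.
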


We find the following observation interesting in its own.
 
\begin{lemma}\label{inval1} 
Let $\sA$ be a Hopf algebra and $x\in \sA$.   Then $\cad(x) = x\otimes\I$ if and only if 
\[\Delta(x)  = (\id\otimes S^2)(\Delta^{\textrm{op}}(x))\] 
\end{lemma}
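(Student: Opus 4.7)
The plan is to argue each direction by direct Sweedler-index manipulation; in both cases the crux is the antipode cancellation $a_{(1)}S(a_{(2)}) = \varepsilon(a)\I$, obtained by the right choice of coassociativity decomposition of $\Delta^{(2)}(x)$.

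For $(\Leftarrow)$, suppose $x_{(1)}\otimes x_{(2)} = x_{(2)}\otimes S^2(x_{(1)})$. Applying $\varepsilon\otimes \id$ to both sides yields $x = S^2(x)$ immediately. Next, applying $\id\otimes\Delta$ to the hypothesis, and using the identity $\Delta\circ S^2 = (S^2\otimes S^2)\circ\Delta$, one obtains
\[
x_{(1)}\otimes x_{(2)}\otimes x_{(3)} = x_{(3)}\otimes S^2(x_{(1)})\otimes S^2(x_{(2)}).
\]
Then apply the linear map $a\otimes b\otimes c\mapsto b\otimes S(a)c$. The left-hand side becomes exactly $\cad(x)=x_{(2)}\otimes S(x_{(1)})x_{(3)}$, while the right-hand side becomes $S^2(x_{(1)})\otimes S(x_{(3)})S^2(x_{(2)}) = S^2(x_{(1)})\otimes S\bigl(S(x_{(2)})x_{(3)}\bigr)$. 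Interpreting $\Delta^{(2)}(x)$ via $(\id\otimes\Delta)\Delta$ so that $(x_{(2)},x_{(3)})$ are the two Sweedler pieces of a single element, the product $S(x_{(2)})x_{(3)}$ collapses to a scalar, and the expression reduces to $S^2(x)\otimes\I$, which equals $x\otimes\I$ by the first step.

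For $(\Rightarrow)$, suppose $\cad(x) = x\otimes\I$. Applying $\id\otimes S$ gives $x_{(2)}\otimes S(x_{(3)})S^2(x_{(1)}) = x\otimes\I$. Now apply the invertible map $W\colon a\otimes b\mapsto a_{(1)}\otimes a_{(2)}b$ from the paper to both sides: the right-hand side becomes $\Delta(x)$, and the left-hand side becomes $(x_{(2)})_{(1)}\otimes (x_{(2)})_{(2)}\,S(x_{(3)})S^2(x_{(1)})$. Writing this in terms of $\Delta^{(3)}(x) = x_{(1)}\otimes x_{(2)}\otimes x_{(3)}\otimes x_{(4)}$, with the pair $((x_{(2)})_{(1)},(x_{(2)})_{(2)})$ occupying the middle two slots, the expression reads $x_{(2)}\otimes x_{(3)}\,S(x_{(4)})\,S^2(x_{(1)})$. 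The key move is to re-read this same element of $\sA^{\otimes 4}$ via the equivalent decomposition $(\id\otimes\id\otimes\Delta)\Delta^{(2)}$, under which $(x_{(3)},x_{(4)})$ are the two Sweedler pieces of a single element of $\Delta^{(2)}(x)$; the factor $x_{(3)}S(x_{(4)})$ then collapses to a scalar, and the resulting expression simplifies directly to $(\id\otimes S^2)\Delta^{\textrm{op}}(x)$. Equating both sides gives the desired identity.

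The main obstacle is purely notational: $\Delta^{(3)}(x)$ is one element of $\sA^{\otimes 4}$ that admits three equivalent decompositions (by applying $\Delta$ to any one factor of $\Delta^{(2)}(x)$), and the whole argument hinges on writing the expression through one decomposition in order to produce an antipode-cancelling pattern $a_{(1)}\otimes a_{(2)}$ visible only under a different decomposition. Once that trick is in place, both directions follow from straightforward linear manipulations and basic Hopf-algebra identities, and no invertibility of $S$ is required.
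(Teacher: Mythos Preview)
Your proof is correct and follows essentially the same strategy as the paper's: direct Sweedler manipulation exploiting the antipode cancellation $S(a_{(1)})a_{(2)}=\varepsilon(a)\I$ (or its mirror). The paper's argument is slightly more economical: for $(\Rightarrow)$ it applies $\Delta\otimes\id$ to the hypothesis and then the map $a\otimes b\otimes c\mapsto b\otimes S^2(a)c$, avoiding your detour through $\id\otimes S$ and $W$; for $(\Leftarrow)$ it applies $\Delta\otimes\id$ rather than $\id\otimes\Delta$, which lands directly on $\cad(x)$ without the auxiliary step $S^2(x)=x$. But these are cosmetic differences; the content is the same.
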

\begin{proof}
Assuming that 
\[x\otimes \I = x_{(2)}\otimes S(x_{(1)})x_{(3)}\] we get 
\[x_{(1)}\otimes x_{(2)} \otimes\I = x_{(2)}\otimes x_{(3)}\otimes S(x_{(1)})x_{(4)}\] Thus 
\[\begin{split}x_{(2)}\otimes S^2(x_{(1)}) &= x_{(3)}\otimes S^2(x_{(2)})S(x_{(1)})x_{(4)}\\& = x_{(3)}\otimes S(x_{(1)}S(x_{(2)}))x_{(4)}\\& = x_{(1)}\otimes x_{(2)}\end{split}\] 

Conversely, if 
\[x_{(1)}\otimes x_{(2)} = x_{(2)}\otimes S^2(x_{(1)})   \] then 
\[x_{(1)}\otimes x_{(2)} \otimes x_{(3)} =x_{(2)}\otimes x_{(3)}\otimes S^2(x_{(1)})\] and  we get 
\[\begin{split}x_{(2)}\otimes S(x_{(1)})x_{(3)} &= x_{(3)}\otimes S(x_{(2)})S^2(x_{(1)})\\& = x_{(3)}\otimes S(S (x_{(1)})x_{(2)} ) \\& =x_{(2)}\otimes \I\varepsilon(x_{(1)}) = x\otimes \I\end{split}\]
\end{proof}

Let $q:\sA\rightarrow \sB $ be a Hopf morphism. Then, us proved in paragraph following \cite[Definition 2.3.8]{AD1}  $q$ is a cocentral morphism  if and only if
\[(q\otimes\id)(\cad(x)) =q(x)\otimes \I\] In what follows we provide an alternative characterization of cocentral Hopf morphisms. 
\begin{lemma}\label{cocentral}
Let $q:\sA\rightarrow \sB $ be a Hopf morphism. Then  $q$ is a cocentral  if and only if
\[(\id\otimes q)(\cad(x)) = x\otimes \I\]
\end{lemma}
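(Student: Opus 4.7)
My plan is to prove each direction by extending the relevant Sweedler identity to a higher-fold coproduct and then contracting via the antipode and counit identities in $\sB$, using that $q$ is a Hopf algebra map.

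For the implication cocentrality $\Rightarrow (\id\otimes q)\cad(x)=x\otimes\I$, I would start from $x_{(1)}\otimes q(x_{(2)})=x_{(2)}\otimes q(x_{(1)})$ and apply $\Delta\otimes\id$ to obtain
\begin{equation*}
  x_{(1)}\otimes q(x_{(2)})\otimes x_{(3)}=x_{(2)}\otimes q(x_{(1)})\otimes x_{(3)}
\end{equation*}
in $\sA\otimes\sB\otimes\sA$. Applying $\id\otimes S_\sB\otimes q$ and multiplying the last two slots in $\sB$ gives $x_{(1)}\otimes S(q(x_{(2)}))q(x_{(3)})=x_{(2)}\otimes S(q(x_{(1)}))q(x_{(3)})$; the right-hand side is $(\id\otimes q)\cad(x)$ by definition, while the left-hand side collapses to $x\otimes\I$, since $\sum S(q(x_{(2)}))q(x_{(3)})=\varepsilon(x_{(2)}^{(2)})\I$ by the antipode axiom in $\sB$ applied to $q(x_{(2)}^{(2)})$, and then $\sum x_{(1)}^{(2)}\varepsilon(x_{(2)}^{(2)})=x$ by the counit axiom.

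For the converse, the key move is to apply the hypothesis to $x_{(2)}^{(2)}$ and then left-multiply the $\sB$-slot by $q(x_{(1)}^{(2)})$; this produces the element
\begin{equation*}
  E(x):=\sum\,(x_{(2)})_{(2)}\otimes q(x_{(1)})\,S(q((x_{(2)})_{(1)}))\,q((x_{(2)})_{(3)})\in\sA\otimes\sB.
\end{equation*}
Computed directly from the hypothesis (pulling $q(x_{(1)})$ out of the inner sum), $E(x)=x_{(2)}\otimes q(x_{(1)})=(\id\otimes q)\Delta^{\op}(x)$. Computed instead by re-parsing the four-fold coproduct $\Delta^{(3)}(x)$ as $(\Delta\otimes\Delta)\Delta(x)$ and using that $q$ is a Hopf map, the inner two $\sB$-factors become $q((x_{(1)})_{(1)})S(q((x_{(1)})_{(2)}))=\varepsilon(x_{(1)})\I$ by the antipode axiom in $\sB$, and the counit then collapses what remains to $E(x)=x_{(1)}\otimes q(x_{(2)})=(\id\otimes q)\Delta(x)$. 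Equating the two expressions for $E(x)$ yields $(\id\otimes q)\Delta=(\id\otimes q)\Delta^{\op}$, i.e.\ cocentrality.

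The delicate point will be bookkeeping the two coassociative parsings of the four-fold coproduct so that both computations unambiguously refer to the same element of $\sA\otimes\sB$. I plan to work throughout in the four-fold notation $\Delta^{(3)}(x)=\sum x_{(1)}\otimes x_{(2)}\otimes x_{(3)}\otimes x_{(4)}$, using $(\id\otimes\Delta^{(2)})\Delta$ for the direct-hypothesis computation and $(\Delta\otimes\Delta)\Delta$ for the antipode-axiom computation, and invoking coassociativity to identify the two results. This argument requires no bijectivity assumption on the antipode.
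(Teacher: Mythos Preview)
Your proof is correct, and it is the Sweedler-notation unpacking of the paper's more compact convolution argument. The paper sets $\pi_1(x)=x\otimes\I$ and $\pi_2(x)=\I\otimes q(x)$, notes that cocentrality is exactly $\pi_1*\pi_2=\pi_2*\pi_1$, and observes that since $\pi_2$ has convolution inverse $\pi_3(x)=\I\otimes S^\sB(q(x))$ this is equivalent to $\pi_3*\pi_1*\pi_2=\pi_1$, which is literally $(\id\otimes q)\cad(x)=x\otimes\I$. Your forward direction is left-convolution of the cocentrality identity by $\pi_3$, and your converse is left-convolution of the hypothesis by $\pi_2$, both carried out by hand; the two computations of $E(x)$ are precisely $\pi_2*(\pi_3*\pi_1*\pi_2)$ evaluated via the hypothesis versus via $\pi_2*\pi_3=\eta\varepsilon$. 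The convolution packaging buys a one-line proof of both directions at once; your version makes the antipode and counit cancellations explicit and, as you note, visibly avoids bijectivity of $S$.

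One small expository slip: the three-fold identity $x_{(1)}\otimes q(x_{(2)})\otimes x_{(3)}=x_{(2)}\otimes q(x_{(1)})\otimes x_{(3)}$ is correct, but it is not what you get by applying $\Delta\otimes\id$ to the cocentrality equation in $\sA\otimes\sB$ (that would land in $\sA\otimes\sA\otimes\sB$ and give $x_{(1)}\otimes x_{(2)}\otimes q(x_{(3)})=x_{(2)}\otimes x_{(3)}\otimes q(x_{(1)})$). What you actually want is to apply cocentrality to the first leg of $\Delta(x)=x_{(1)}\otimes x_{(2)}$ and carry the second leg along, i.e.\ the equality $((\id\otimes q)\Delta\otimes\id)\Delta=((\id\otimes q)\Delta^{\op}\otimes\id)\Delta$. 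With that correction the rest of your computation goes through verbatim.
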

\begin{proof}
Let us consider a pair of morphisms $\pi_1,\pi_2:\sA\rightarrow \sA\otimes \sB $, $\pi_1(x) = x\otimes \I$, $\pi_2(x) = \I\otimes q(x)$.
It is easy to see that $q$ is cocentral if and only if $\pi_1*\pi_2 = \pi_2*\pi_1$. Defining  a linear map $\pi_3(x) = \I\otimes S^\sB(q(x)) = \I\otimes q(S^\sA(x))$ we get 
\[\begin{split}\pi_2*\pi_3(x) &= \pi_2(x_{(1)})\pi_3(x_{(2)})\\& = \I\otimes q(x_{(1)})S^\sB(q(x_2))\\& = \I\otimes q(x_{(1)}S^\sA(x_{(2)}))\\& =\varepsilon(x) (\I\otimes\I)\end{split}\] 
Similarly 
\[\pi_3*\pi_2(x) =\varepsilon(x) (\I\otimes\I)\] Thus $\pi_2$ is convolutively invertible. In particular $q$ is cocentral if and only if 
   \[\pi_2^{-1}*\pi_1*\pi_2 = \pi_1\] i.e. if and only if 
   \[ x_{(2)}\otimes q(S^\sA(x_{(1)})x_{(3)}) = x\otimes\I
  \]
and we are done.
\end{proof}
\begin{lemma}\label{cocentral1}
 Let us define 
 \[ \sC  = \textrm{linspan}\{(\omega\otimes\id)(\cad(x)):\omega\in \sA^*,x\in \sA\}\] 
 Then \begin{itemize}
       \item $ \sC $ is a subcoalgebra of $\sA$ and $\cad(\sA)\subset \sA\otimes  \sC $;
       \item if $q:\sA\rightarrow \sB $ is cocentral then for all $x\in  \sC $ we have
       \[(q\otimes\id)(\Delta(x)) = \I\otimes x\]
      \end{itemize}

\end{lemma}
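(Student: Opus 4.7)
The proof splits into two independent parts, one for each bullet.

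For the first bullet, the inclusion $\cad(\sA) \subset \sA \otimes \sC$ is essentially tautological: given $x \in \sA$, write $\cad(x) = \sum_i v_i \otimes w_i$ as a finite sum with the $v_i$ linearly independent; choosing functionals $\omega_i \in \sA^*$ dual to $v_i$ on the finite-dimensional span exhibits each $w_i = (\omega_i \otimes \id)(\cad(x))$ as an element of $\sC$. The subcoalgebra claim follows from the coassociativity-type identity \Cref{coact}, namely $(\cad \otimes \id)(\cad(x)) = (\id \otimes \Delta)(\cad(x))$. Apply $\omega \otimes \id \otimes \id$ to both sides: the right-hand side yields $\Delta\bigl((\omega \otimes \id)(\cad(x))\bigr) = \Delta(y)$ for the typical generator $y$ of $\sC$, while the left-hand side lies in $\sC \otimes \sC$ because iterated use of the inclusion $\cad(\sA) \subset \sA \otimes \sC$ shows that $(\cad \otimes \id)(\cad(x)) \in \sA \otimes \sC \otimes \sC$. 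Hence $\Delta(\sC) \subset \sC \otimes \sC$.

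For the second bullet, invoke \Cref{cocentral}: $q$ is cocentral exactly when $(\id \otimes q)(\cad(a)) = a \otimes \I$ for every $a \in \sA$. Combining this with \Cref{coact} one computes
\begin{align*}
\bigl(\id \otimes (q \otimes \id)\Delta\bigr)(\cad(x))
&= (\id \otimes q \otimes \id)(\id \otimes \Delta)(\cad(x)) \\
&= (\id \otimes q \otimes \id)(\cad \otimes \id)(\cad(x)) \\
&= \bigl(((\id \otimes q)\cad) \otimes \id\bigr)(\cad(x)).
\end{align*}
Since $(\id \otimes q)\cad$ sends each $a \in \sA$ to $a \otimes \I$, the right-hand side is obtained from $\cad(x)$ by inserting $\I$ between its two tensor legs. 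Slicing the first leg by an arbitrary $\omega \in \sA^*$ converts this into $(q \otimes \id)\Delta(y) = \I \otimes y$ for $y = (\omega \otimes \id)(\cad(x))$, and linearity extends the identity to every $y \in \sC$.

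The main (mild) obstacle is bookkeeping: one must deploy \Cref{coact} exactly once to reinterpret $(\id \otimes \Delta)\cad$ as an iterated $\cad$, so that the middle $q$ can be absorbed via the cocentrality reformulation of \Cref{cocentral}. Once this rewriting is in place, both claims fall out uniformly by slicing the first leg with an arbitrary functional. Notably, no finite-dimensionality, bijective-antipode, or (co)flatness assumption enters.
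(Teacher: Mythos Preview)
Your proof is correct and follows essentially the same approach as the paper's own argument: both bullets are proved by invoking the coassociativity-type identity $(\cad\otimes\id)\circ\cad = (\id\otimes\Delta)\circ\cad$ from \Cref{coact}, then slicing the first leg by an arbitrary functional $\omega$, with the second bullet additionally using the reformulation of cocentrality from \Cref{cocentral} to collapse the middle leg. Your write-up is slightly more explicit (e.g., spelling out why $\cad(\sA)\subset\sA\otimes\sC$ is tautological and unpacking the iterated inclusion $(\cad\otimes\id)(\cad(x))\in\sA\otimes\sC\otimes\sC$), but the underlying computation is identical to the paper's.
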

\begin{proof}
Directly from the definition of $ \sC $ we get $\cad(\sA)\subset \sA\otimes  \sC $. 
Using $\cad(x)\in \sA\otimes  \sC $ we get 
\[
\begin{split}
\Delta((\omega\otimes\id)(\cad(x))) & = (\omega\otimes\id\otimes\id)((\id\otimes\Delta)(\cad(x)))\\& = (\omega\otimes\id\otimes\id)((\cad\otimes\id)(\cad(x)))\\&  \in(\omega\otimes\id)(\cad(A))\otimes \sC \subset  \sC \otimes \sC 
\end{split}
\] which shows that $ \sC $ is a coalgebra. 

For  $\omega\in \sA^*$ and $x\in \sA$ we get 
\[
\begin{split}
(q\otimes\id)(\Delta((\omega\otimes\id)(\cad(x)))) & = (\omega\otimes\id\otimes\id)((\id\otimes q\otimes\id)((\cad\otimes\id)(\cad(x))))\\&=(\omega\otimes\id\otimes\id)(\cad(x)_{13}) =\I\otimes(\omega\otimes\id)(\cad(x))
\end{split}
\] where in the second equality we used \Cref{cocentral}. 
\end{proof}
Let $\sD$ be the algebra  generated by $ \sC $. Since $ \sC $ is a coalgebra, $\sD$ is  a bialgebra: $\Delta( \sD)\subset \sD\otimes  \sD$.  Let $\pi:\sA\rightarrow \hc(\sA)$ be the canonical quotient. Using \Cref{cocentral1} we see that 
 \[\sD\subset\{x\in \sA: (\pi\otimes\id)(\Delta(x)) = \I\otimes x\}\]
 We shall prove that $\sD$ is preserved by a certain version of the  adjoint action. 
 
\begin{lemma}\label{normalityD} Let $\sA$ be a Hopf algebra with the bijective  antipode   $S$. 
For all $x\in \sA$ and $y\in  \sD$ we have 
\[S(x_{(1)})yx_{(2)}\in \sD\]
\end{lemma}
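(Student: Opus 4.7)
The plan is to reduce the statement to elements of $\sC$ via a Leibniz-type identity for the twisted right adjoint $\ad'_x(y):=S(x_{(1)})\,y\,x_{(2)}$, and then handle $\sC$ by a direct Sweedler calculation.

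First I would verify the Leibniz identity
\[
\ad'_x(uv) \;=\; \ad'_{x_{(1)}}(u)\,\ad'_{x_{(2)}}(v),\qquad x,u,v\in\sA.
\]
Using $\Delta^{(3)}(x)=x_{(1)}\otimes x_{(2)}\otimes x_{(3)}\otimes x_{(4)}$ and the antipode identity $x_{(2)}S(x_{(3)})=\varepsilon(x_{(2)})\I$ applied to the middle pair, the right-hand side $S(x_{(1)})\,u\,x_{(2)}S(x_{(3)})\,v\,x_{(4)}$ collapses to $S(x_{(1)})\,uv\,x_{(2)}$ after contraction of Sweedler indices, which is exactly $\ad'_x(uv)$. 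Consequently the set
\[
\sE:=\{u\in\sA : \ad'_x(u)\in\sD \text{ for all } x\in\sA\}
\]
is a unital subalgebra of $\sA$ (it contains $\I$ because $\ad'_x(\I)=\varepsilon(x)\I\in\sD$). Since $\sD$ is by definition the subalgebra of $\sA$ generated by $\sC$, it suffices to show $\sC\subseteq \sE$.

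For the second step, take $c=(\omega\otimes\id)\cad(y)=\omega(y_{(2)})\,S(y_{(1)})\,y_{(3)}\in\sC$ and compute
\[
\ad'_x(c)=\omega(y_{(2)})\,S(y_{(1)}x_{(1)})\,y_{(3)}\,x_{(2)}.
\]
The second tensor factor of $\cad(yx)$ is $S((yx)_{(1)})(yx)_{(3)}=S(y_{(1)}x_{(1)})\,y_{(3)}\,x_{(3)}$, which already lies in $\sC$ and differs from $\ad'_x(c)$ only by one Sweedler copy of $x$ at the end. To correct this discrepancy I would apply coassociativity $(\cad\otimes\id)\cad=(\id\otimes\Delta)\cad$ to $yx$, so that the $\sC$-valued second leg of $\cad(yx)$ itself splits as a product of two $\sC$-elements in $\sC\otimes\sC$; a well-chosen slice on the first leg, combined with an antipode collapse of the form $S(x_{(j)})x_{(j+1)}=\varepsilon(\cdot)\I$ to eliminate the extra $x$-Sweedler copy, then exhibits $\ad'_x(c)$ as a sum of products of two elements of $\sC$. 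Thus $\ad'_x(c)\in\sC\cdot\sC\subseteq\sD$, and the inductive reduction is complete.

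The chief obstacle is the Sweedler-index bookkeeping in the second step: the comultiplication depths of $x$ appearing in $\ad'_x(c)$ (namely $\Delta(x)$) and in the second tensor factor of $\cad(yx)$ (namely $\Delta^{(2)}(x)$) do not match, so reconciling them requires a careful invocation of the antipode and counit axioms. Should this direct matching prove unwieldy, one can instead exploit the fact that $\cad$ makes $\sA$ into a right comodule over the coalgebra $\sC$, together with the already established fact that $\sD$ is a sub-bialgebra of $\sA$, and iterate the argument through that additional structure.
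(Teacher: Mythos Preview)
Your first step is fine and is implicitly how the paper proceeds as well: the Leibniz identity for $\ad'_x$ reduces the problem to showing $\ad'_x(c)\in\sD$ for a generic element $c=\omega(y_{(2)})S(y_{(1)})y_{(3)}$ of $\sC$, and the paper's proof begins with exactly that element.

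The second step, however, has a genuine gap. You correctly observe the Sweedler-depth mismatch between $\ad'_x(c)=\omega(y_{(2)})S(y_{(1)}x_{(1)})y_{(3)}x_{(2)}$ and slices of $\cad(yx)$, but your proposed remedy---iterate $\cad$ on $yx$ and then ``choose a slice'' on the first leg---does not work. Any functional on the first leg of $(\cad\otimes\id)\cad(yx)$ is evaluated on a product $y_{(3)}x_{(3)}$, whereas what you need is a functional that hits \emph{only} $y_{(2)}$ and leaves the $x$-components untouched; there is no reason such a separating functional should exist. The antipode collapses you mention do not resolve this, and tellingly your sketch never invokes $S^{-1}$, even though bijectivity of $S$ is a hypothesis of the lemma.

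The paper's argument uses a genuinely different device: it regards $\sA$ with the $\cad$-coaction as a comodule $\sH$ and exploits the explicit comodule isomorphism $T:\sH\otimes\sA\to\sA\otimes\sH$, $T(h\otimes x)=x_{(1)}\otimes hx_{(2)}$, whose inverse $T^{-1}(x\otimes h)=hS^{-1}(x_{(1)})\otimes x_{(2)}$ requires $S^{-1}$. Passing the coaction through $T$ and back leads to a functional $\Omega(a\otimes b)=\omega(aS^{-1}(b))$ and yields the identity
\[
\omega(y_{(2)})S(x_{(1)})S(y_{(1)})y_{(3)}x_{(2)}=m\bigl((\id\otimes\Omega\otimes\id)\bigl(\cad(x_{(1)})_{21}\,\cad(yx_{(2)})_{34}\bigr)\bigr),
\]
whose first and fourth legs lie in $\sC$ (as second legs of $\cad$), so the result lands in $\sC\cdot\sC\subset\sD$. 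The decisive idea---factor through $\cad(x)$ and $\cad(yx')$ rather than through iterates of $\cad(yx)$, via the intertwiner $T$---is precisely what your proposal is missing.
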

\begin{proof}
Let $\sH$ denote the $\sA$-comodule  whose underlying vector space is $\sA$  and the comodule map  is $\cad$. Let us view   $\sA$ as    the $\sA$-comodule with the comodule map $\Delta$. Let us consider the comodules  tensor products $\sH\otimes \sA $ and  $\sA\otimes \sH$. Using \cite[Proposition 2.4]{chi_coc}
we see that $T: \sH\otimes \sA \rightarrow \sA\otimes \sH$  
\[T(h\otimes x) = x_{(1)}\otimes h x_{(2)}\] yields the comodules identification where 
\begin{equation}\label{tinv}T^{-1}(x\otimes h)  = hS^{-1}(x_{(1)})\otimes x_{(2)}\end{equation} 
In particular, denoting the corresponding comodules maps   by $\rho^{\sH\otimes \sA}$ and $\rho^{\sA\otimes \sH}$ we have 
\[\rho^{\sH\otimes \sA} = (T^{-1}\otimes\id) \circ \rho^{\sA\otimes \sH}\circ T\] In what follows  $m:\sA\otimes \sA \to \sA$ denotes the multiplication map and $\omega\in \sA^*$ a generic linear  functional. Let us consider the element $\omega(y_{(2)}) S(y_{(1)})y_{(3)}\in\sC$ and $x\in \sA$. 
We compute 
\[\begin{split}\omega(y_{(2)})&S(x_{(1)})S(y_{(1)})y_{(3)}x_{(2)} = m\left((\id\otimes\omega\otimes\varepsilon\otimes\id)(S(x_{(1)})\otimes y_{(2)}\otimes x_{(2)}\otimes S(y_{(1)})y_{(3)}x_{(3)})\right)\\&=m\left((\id\otimes\omega\otimes\varepsilon\otimes\id)(S(x_{(1)})\otimes \rho^{\sH\otimes \sA}(y\otimes x_{(2)}))\right)\\&
 = m\left((\id\otimes((\omega\otimes\varepsilon)\circ T^{-1})\otimes\id)(S(x_{(1)})\otimes \rho^{\sA\otimes \sH}(T(y\otimes x_{(2)})))\right)\\&=
m\left((\id\otimes((\omega\otimes\varepsilon)\circ T^{-1})\otimes\id)(S(x_{(1)})\otimes \rho^{A\otimes H}(  x_{(2)}\otimes y x_{(3)}))\right)
\\&=
m\left((\id\otimes((\omega\otimes\varepsilon)\circ T^{-1})\otimes\id)(S(x_{(1)})\otimes x_{(2)}\otimes((\I\otimes x_{(3)})\cad(yx_{(4)}))\right)
\end{split}\]
Denoting $\Omega = (\omega\otimes\varepsilon)\circ T^{-1}\in (\sA\otimes \sA)^*$ and using \Cref{tinv} we get 
\[\Omega(a\otimes b) =  (\omega\otimes\varepsilon)(aS^{-1}(b_{(1)})\otimes b_{(2)}) = \omega(aS^{-1}(b))\] and 
\[\begin{split}\omega(y_{(2)})S(x_{(1)})S(y_{(1)})y_{(3)}x_{(2)}&= m\left((\id\otimes\Omega\otimes\id)(S(x_{(1)})\otimes x_{(2)}\otimes((\I\otimes x_{(3)})\cad(yx_{(4)}))\right)\\&= m\left((\id\otimes\Omega\otimes\id)(S(x_{(1)})x_{(3)}\otimes x_{(2)}\otimes\cad(yx_{(4)}))\right) \end{split}\] where in the last equality,  moving $x_{(3)}$ towards $S(x_{(1)})$, we used the fact that, at the end the multiplication map $m$ is applied.  The last expression is of the form (here we use the leg numbering notation) 
\[m\left((\id\otimes\Omega\otimes\id)(\cad(x_{(1)})_{21} \cad(yx_{(2)})_{34})\right)\] Since 
$\cad(\sA)\subset \sA\otimes \sC $  we get 
\[\cad(x_{(1)})_{21} \cad(yx_{(2)})_{34}\in \sC \otimes \sA\otimes \sA\otimes \sC \] thus 
\[\omega(y_{(2)})S(x_{(1)})S(y_{(1)})y_{(3)}x_{(2)}\in \sD\] 
 \end{proof}

\begin{remark}\label{rem.free}
  Following \cite[$\S$3]{schau}, consider the Hopf algebra $\sA$ with
  bijective antipode freely generated by the $n\times n$ matrix
  coalgebra $\sC$ for some $n\ge 2$ (the notation in loc. cit. is
  $\widehat{H}(\sC)$). In this case, it can be shown that the subalgebra
  $ \sD $ defined above is not invariant under the antipode. 

Hence, it
  is possible for $\sD$ to be an ad-invariant sub-bialgebra of $\sA$
  but not a Hopf subalgebra.   
\end{remark}

\section{Cocentral exact sequence}

The following result will be crucial for the proof  that a Hopf algebra and its cocenter fit into an exact sequence in the sense of \Cref{esdef}. Before we state it, we need the following notion.

\begin{definition}\label{def.saturated}
  Let $f:\sA\to \sB$ be a Hopf algebra morphism. We say that $f$ \define{trivializes} a subset $\cS\subset \sA$ (or that $\cS$ is \define{trivialized} by $f$) if
  \begin{equation*}
    ((\id\otimes f)\circ\Delta)(s)=s\otimes 1,\ \forall s\in \cS. 
  \end{equation*}
  
  Under the above setup, the \define{Hopf reflection} of $\cS\subset \sA$ is a Hopf algebra morphism $\sA\to \sB$ that trivializes $\cS$ and is universal with this property, in the following sense:

any Hopf algebra map $\sA\to \sB'$ that trivializes $\cS$ factors uniquely as
\begin{equation*}
  \begin{tikzpicture}[auto,baseline=(current  bounding  box.center)]
    \path[anchor=base] (0,0) node (1) {$ \sA $} +(2,.5) node (2) {$\sB$} +(4,0) node (3) {$\sB'$};
         \draw[->] (1) to[bend left=10] node[pos=.5] {} (2);     
         \draw[->] (2) to[bend left=10] (3);     
         \draw[->] (1) to[bend right=10] (3);          
  \end{tikzpicture}
\end{equation*}
A Hopf algebra map $\sA\to \sB$ is \define{saturated} if it is the Hopf reflection of some subset $S\subseteq \sA$. 
\end{definition}
\begin{remark}
  Note that the Hopf reflection of a subset $\cS\subseteq \sA$ may or may not exist, but if it does it is a uniquely determined Hopf quotient of $\sA$. 
\end{remark}

With this in place, we can state the result alluded to above.

\begin{theorem}\label{saturated}
  Every saturated Hopf algebra map $f:\sA\to \sB$ invariant under either the left or right adjoint coaction of $\sA$ on itself fits into an exact sequence 
  \begin{equation*}
    \sk\to \sC\to \sA\to \sB\to \sk
  \end{equation*}
of Hopf algebras in the sense of \Cref{esdef}. 
\end{theorem}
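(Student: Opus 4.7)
The plan is to set $\sC := \{x\in \sA : (f\otimes\id)\Delta(x) = 1\otimes x\}$ and verify the four conditions of \Cref{esdef} in turn. I work under the hypothesis of invariance under the right adjoint coaction; the left case is entirely symmetric. Condition (1), injectivity of the inclusion $\sC\hookrightarrow\sA$, is automatic, and condition (4) holds by the very definition of $\sC$. For surjectivity of $f$ (condition (2)), I would invoke the universal property of $f$ as the Hopf reflection of $\cS$: the image $f(\sA)\subseteq\sB$ is a Hopf subalgebra through which $f$ factors, and the factored surjection $\sA\to f(\sA)$ still trivializes $\cS$; by universality the induced Hopf map $\sB\to f(\sA)$ composed with the inclusion $f(\sA)\hookrightarrow\sB$ must equal $\id_\sB$, whence $f(\sA)=\sB$.

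Before addressing condition (3), I would show that $\sC$ is a Hopf subalgebra and not merely a coideal subalgebra. A direct Sweedler computation gives that $\sC$ is a subalgebra satisfying $\Delta(\sC)\subseteq \sC\otimes\sA$. Ad-invariance of $f$ corresponds, as in \Cref{remex}, to normality, i.e.\ to the equivalence
\[
(f\otimes\id)\Delta(x) = 1\otimes x \iff (\id\otimes f)\Delta(x) = x\otimes 1.
\]
From this symmetry one deduces $S(\sC)=\sC$ and then $\Delta(\sC)\subseteq \sC\otimes\sC$ by a routine manipulation combining coassociativity with $S$-stability, making $\sC$ a Hopf subalgebra.

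The core of the argument is condition (3), the identification $\ker f = \sA\sC^+$. The inclusion $\sA\sC^+\subseteq \ker f$ is immediate: for $c\in \sC^+$, applying $\id\otimes\varepsilon$ to $f(c_{(1)})\otimes c_{(2)} = 1\otimes c$ yields $f(c)=\varepsilon(c)=0$, and $\ker f$ is an ideal. For the reverse inclusion, $\sA\sC^+$ is a Hopf ideal (again by ad-invariance), so the quotient $q:\sA\to\overline{\sA}:=\sA/\sA\sC^+$ is a Hopf algebra map, and there is a factorization $f = \bar f\circ q$ for a surjective Hopf algebra map $\bar f:\overline{\sA}\to\sB$. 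I would then verify that $q$ itself trivializes $\cS$; by the universal property of $f$ this produces a Hopf algebra map $g:\sB\to\overline{\sA}$ with $g\circ f = q$. Combined with $\bar f\circ q = f$ and the surjectivity of $q$ and $f$, this forces $\bar f\circ g=\id_\sB$ and $g\circ\bar f=\id_{\overline\sA}$, so $\bar f$ is an isomorphism and $\ker f=\sA\sC^+$.

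The principal obstacle is the verification that $q$ trivializes $\cS$, equivalently that $s-\varepsilon(s)\cdot 1\in \sA\sC^+$ for every $s\in \cS$. This is where saturation and ad-invariance must be exploited together: saturation identifies $\ker f$ with the smallest Hopf ideal containing $\{s-\varepsilon(s):s\in \cS\}$, while ad-invariance guarantees that $\sA\sC^+$ is itself closed under all the operations (multiplication, antipode, adjoint action, and comultiplication followed by multiplication) needed to generate this Hopf ideal from those generators. Concretely, one shows by a structural calculation in the spirit of the proof of \Cref{normalityD} that the Hopf ideal generated by $\{s-\varepsilon(s):s\in\cS\}$ is already contained in $\sA\sC^+$, which delivers the desired reverse inclusion and completes the verification of the four conditions of \Cref{esdef}.
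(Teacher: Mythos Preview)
Your overall architecture matches the paper's exactly: define $\sC$ as the set of coinvariants, check it is a Hopf subalgebra via ad-invariance, and then identify $\ker f$ with $\sA\sC^+$ by playing the universal property of the Hopf reflection against the Hopf quotient $\sA\to\sA/\sA\sC^+$. The paper also does not dwell on surjectivity of $f$; your argument for it is fine.

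Where you go astray is at what you call the ``principal obstacle''. First, the claimed equivalence is false: ``$q$ trivializes $s$'' implies $s-\varepsilon(s)\in\sA\sC^+$ (apply $\varepsilon\otimes\id$), but the converse fails in general, since $q(s)=\varepsilon(s)1$ alone does not force $s_{(1)}\otimes q(s_{(2)})=s\otimes 1$. Second, and more importantly, there is no obstacle at all once you notice the one-line observation the paper uses: \emph{$\cS\subseteq\sC$}. Indeed, by definition of ``Hopf reflection of $\cS$'' the map $f$ trivializes $\cS$, i.e.\ $(\id\otimes f)\Delta(s)=s\otimes 1$ for every $s\in\cS$; after the normality you already established, this is the same as $(f\otimes\id)\Delta(s)=1\otimes s$, so $s\in\sC$. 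Since $\sC$ is a Hopf subalgebra, $\Delta(s)\in\sC\otimes\sC$, and then $q(s_{(2)})=\varepsilon(s_{(2)})1$ gives $(\id\otimes q)\Delta(s)=s\otimes 1$ immediately. Thus $q$ trivializes $\cS$, universality yields the factorization $\sA\to\sB\to\sA/\sA\sC^+$, and $\ker f\subseteq\sA\sC^+$ follows. No ``structural calculation in the spirit of \Cref{normalityD}'' is needed, and the vague description you give of that calculation would not rescue the incorrect ``equivalently'' in any case.
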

\begin{proof}
    Let $\cS\subseteq A$ be a subset whose Hopf reflection $f$ is. Define $\sC\subset \sA$ by
  \begin{equation*}
    \sC=\{a\in \sA\ |\ ((\id\otimes f)\circ\Delta)(a)=a\otimes 1\in \sA\otimes \sB\}. 
  \end{equation*}
The adjoint coaction invariance assumption on $f$ ensures that $\sC$ is a Hopf algebra (by the dual version of \cite[Lemma 1.1.11]{AD}). Moreover, we then know from \cite[Lemma 1.1.12 (i)]{AD} that $\sA\to \sA/\sA\sC^+$ is a Hopf algebra quotient. According to \Cref{esdef} we will be done with the proof of the theorem once we argue that $\sA\sC^+=\ker(f)$ (i.e. $\sA\to \sA/\sA\sC^+$ can be identified with $f$). 

Since $\sA\to \sA/\sA\sC^+$ is the Hopf cokernel of $\sC\to \sA$ which in turn is the Hopf kernel of $f$, we already know that the latter map factors through $\sA/\sA\sC^+$, and hence $\sA\sC^+\subseteq\ker(f)$.

On the other hand $\sC$ contains $\cS$, so the universality property implicit in the saturation hypothesis shows that we have a factorization
\begin{equation*}
  \begin{tikzpicture}[auto,baseline=(current  bounding  box.center)]
    \path[anchor=base] (0,0) node (1) {$ \sA $} +(2,.5) node (2) {$\sB$} +(4,0) node (3) {$\sA/\sA\sC^+$};
         \draw[->] (1) to[bend left=10] node[pos=.5] {$f$} (2);     
         \draw[->] (2) to[bend left=10] (3);     
         \draw[->] (1) to[bend right=10] (3);          
  \end{tikzpicture}
\end{equation*}
thus getting the reverse inclusion $\ker(f)\subseteq \sA\sC^+$. 
\end{proof}

As a consequence, we get

\begin{corollary}\label{cor.gen_seq} 
    Let $\sA$ be an arbitrary Hopf algebra. Then, the cocenter $\hc=\hc(\sA)$ fits into an exact sequence
  \begin{equation}\label{eq.coc_seq}
    \sk\to \sC\to \sA\to \hc\to \sk
  \end{equation}
of Hopf algebras.
\end{corollary}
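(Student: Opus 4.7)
The plan is to apply \Cref{saturated} to $\pi:\sA\to\hc(\sA)$. This requires two inputs: invariance of $\pi$ under the adjoint coaction of $\sA$ on itself, and saturation of $\pi$ in the sense of \Cref{def.saturated}.

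The first input is essentially free. Since $\pi$ is cocentral by construction of $\hc$, the paragraph preceding \Cref{cocentral} tells us this is equivalent to $(\pi\otimes\id)(\cad(x))=\pi(x)\otimes\I$ for all $x\in\sA$, which is exactly the required adjoint-coaction invariance.

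For the saturation, I plan to take as defining subset the subcoalgebra
\[
\sE:=\textrm{linspan}\{(\omega\otimes\id)(\cad(x)):\omega\in\sA^*,\ x\in\sA\}
\]
from \Cref{cocentral1}, and to argue that $\pi$ is the Hopf reflection of $\sE$. That $\pi$ trivializes $\sE$ should follow in two moves. First, applying $\omega\otimes\id$ to the identity $(\id\otimes\pi)(\cad(x))=x\otimes\I$ of \Cref{cocentral} yields $\pi(c)=\varepsilon(c)\cdot\I$ for every $c\in\sE$, using $\varepsilon(\omega(x_{(2)})S(x_{(1)})x_{(3)})=\omega(x)$. Second, because \Cref{cocentral1} ensures $\Delta(\sE)\subseteq\sE\otimes\sE$, the full trivialization $(\id\otimes\pi)(\Delta(c))=c_{(1)}\otimes\varepsilon(c_{(2)})\cdot\I=c\otimes\I$ follows from the counit axiom. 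For universality, suppose $f:\sA\to\sB$ is any Hopf algebra map trivializing $\sE$. Applying $\varepsilon\otimes\id$ to the trivialization forces $f|_\sE=\varepsilon|_\sE\cdot\I$; specializing to $c=\omega(x_{(2)})S(x_{(1)})x_{(3)}$ and using that $\sA^*$ separates points of $\sA$, this yields $(\id\otimes f)(\cad(x))=x\otimes\I$, whence $f$ is cocentral by \Cref{cocentral} and factors through $\pi$ by the universal property of $\hc$.

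The main obstacle I anticipate is pinpointing the correct defining subset. A tempting literal translation of cocentrality into a trivialization condition uses $\cad$ in place of $\Delta$ and therefore does not immediately match \Cref{def.saturated}. The bridge is the observation above that on the subcoalgebra $\sE$, the trivialization condition collapses to the plain identity $f|_\sE=\varepsilon|_\sE\cdot\I$, which is exactly what cocentrality demands via \Cref{cocentral}. With saturation and adjoint-coaction invariance in hand, \Cref{saturated} delivers \Cref{eq.coc_seq}.
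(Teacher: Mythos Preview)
Your proof is correct and follows essentially the same route as the paper's. The paper's argument is a terse one-liner: it notes that $\sA\to\hc$ is invariant under both adjoint coactions and is saturated, by \Cref{cocentral,cocentral1}, with $\cS$ taken to be the set of right-hand tensorands of $x_{(2)}\otimes S(x_{(1)})x_{(3)}$ --- precisely your $\sE$ --- and then invokes \Cref{saturated}. You have simply unpacked the verification of trivialization and universality that the paper leaves implicit in those two citations.
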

\begin{proof}
  $\sA\to \hc$ is invariant under both adjoint coactions, and it is saturated by \Cref{cocentral,cocentral1} with $\cS$ being the set of right hand tensorands of
  \begin{equation*}
     x_{(2)}\otimes S( x_{(1)})x_{(3)}\in \sA\otimes \sA
  \end{equation*}
for $x\in A$. Hence \Cref{saturated} applies to this situation and we are done. 
\end{proof}

Exact sequences are particularly pleasant when we know in addition that the Hopf algebra in the middle is faithfully (co)flat over the other terms. 
We have the following analogue of \Cref{hgc_sequence} for the cocenter (rather than the group cocenter) in case $\sA$ is pointed.

\begin{corollary}\label{cor.pointed_seq}
  Let $\sA$ be a pointed Hopf algebra. Then, the cocenter $\hc=\hc(\sA)$ fits into an exact sequence
  \begin{equation*}
    \sk\to \sC\to \sA\to \hc\to \sk
  \end{equation*}
of Hopf algebras, with $\sA$ faithfully flat over $\sC$ and faithfully coflat over $\hc$. 
\end{corollary}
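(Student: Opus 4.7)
The existence of the exact sequence
\begin{equation*}
  \sk \to \sC \to \sA \to \hc \to \sk
\end{equation*}
is already supplied by \Cref{cor.gen_seq}, so the only new content to establish in the pointed case is the faithful (co)flatness claims.

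The plan is to obtain faithful flatness directly from pointedness and then transfer it to faithful coflatness via Takeuchi's theorem. For faithful flatness of $\sA$ over $\sC$, I would invoke Radford's classical theorem that a pointed Hopf algebra is free as a module over any of its Hopf subalgebras; applied to the inclusion $\sC\hookrightarrow\sA$ this gives that $\sA$ is free, and in particular faithfully flat, over $\sC$ from both sides. For the coflatness side, I would reuse the tool already used in the proof of \Cref{hgc_sequence}, namely \cite[Theorem 2]{tak}, which provides the equivalence between faithful flatness of $\sA$ over the kernel $\sC$ and faithful coflatness of $\sA$ over the cokernel $\hc$ inside a Hopf algebra exact sequence. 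The equivalence is applied in the reverse direction from that earlier proof: there, faithful coflatness over the cosemisimple $\hgc$ was automatic and yielded faithful flatness over $\sC$; here, faithful flatness over $\sC$ is automatic via Radford and is used to deduce faithful coflatness over $\hc$.

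The main place to be careful, and the only likely obstacle, is in verifying that the hypotheses of Takeuchi's equivalence are indeed met by the sequence coming from \Cref{cor.gen_seq}. These essentially reduce to the exactness of that sequence in the sense of \Cref{esdef}, together with the one-sided (co)flatness input, both of which are already in hand. Consequently no genuinely new calculation should be needed beyond assembling these two cited results.
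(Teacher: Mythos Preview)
Your proposal is correct and follows essentially the same line as the paper's proof: existence from \Cref{cor.gen_seq}, faithful flatness over $\sC$ from Radford's freeness theorem for pointed Hopf algebras, and then Takeuchi's theorem to pass to faithful coflatness over $\hc$. The only minor discrepancy is that the relevant direction here is \cite[Theorem 1]{tak} rather than \cite[Theorem 2]{tak}; the latter goes from coflatness to flatness, while the former is what you need to go from flatness to coflatness.
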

\begin{proof}
  The claim on the existence of the exact sequence is \Cref{cor.gen_seq}, so only the (co)flatness claims need proof. 

According to \cite[Theorem 9.3.1]{rad_book} (or \cite{rad_free}), pointed Hopf algebras are faithfully flat (free, in fact) over their Hopf subalgebras. Hence we know that $\sA$ is faithfully flat over $\sC$, so that according to \cite[Theorem 1]{tak} in the exact sequence 
\begin{equation*}
  \sk\to \sC\to \sA\to \sA/\sA\sC^+\to \sk
\end{equation*}
$\sA$ faithfully coflat over $\sA/\sA\sC^+$ and $\cH \cong \sA\sC^+$.
\end{proof}

In fact, with a little more work we can extend this result to the fully general setup of this paper.

\begin{theorem}\label{coflat}
  Every Hopf algebra $\sA$ is faithfully flat over its cocenter $\hc$, and faithfully flat over the Hopf kernel $\sC$ of the cocenter. 
\end{theorem}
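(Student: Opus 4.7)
By Corollary \ref{cor.gen_seq} we already have the exact sequence
\[ \sk \to \sC \to \sA \to \hc \to \sk, \]
and by \cite[Theorem 2]{tak} faithful coflatness of $\sA$ over $\hc$ in this situation implies faithful flatness of $\sA$ over $\sC$. So it suffices to prove faithful coflatness over $\hc$, which by Remark \ref{re.coflat=inj} amounts to showing that $\sA$ is an injective cogenerator in $\cM^{\hc}$.

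The key observation is that $\hc$ is cocommutative by \cite[Lemma 2.3.7]{AD1}, hence pointed over the algebraically closed field $\sk$, with coradical $\sk\G$ where $\G = \sG(\hc)$ is identified with the universal grading group of $\sA$ by Proposition \ref{pr.cocentral_gplikes}. Pointedness together with cocommutativity yields a coalgebra decomposition $\hc = \bigoplus_{g \in \G} E_g$, each $E_g$ being an irreducible cocommutative subcoalgebra and the injective envelope of the simple comodule $\sk g$ in $\cM^{\hc}$. The injective-cogenerator criterion then reduces to showing that every $E_g$ appears as a direct summand of $\sA$ as a right $\hc$-comodule.

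To establish this, I would combine Theorem \ref{hgc_sequence}, which furnishes faithful coflatness of $\sA$ over $\hgc = \sk\G$ and hence a non-degenerate grading $\sA = \bigoplus_{g \in \G} \sA_g$ with each $\sA_g \neq 0$, with the cocentrality of $\sA \to \hc$. Cocentrality forces the $\hc$-coaction on $\sA$ to refine the $\hgc$-coaction, so the $\hc$-structure map carries $\sA_g$ into $\sA_g \otimes E_g$; injectivity of $E_g$ in $\cM^{\hc}$ should then allow one to split off a copy of $E_g$ from the $\hc$-subcomodule generated by any nonzero $a \in \sA_g$. The direct sum of these copies realizes $\hc$ itself as an $\hc$-subcomodule direct summand of $\sA$, giving the cogenerator property and, along the way, injectivity of $\sA$ in $\cM^{\hc}$.

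The most delicate step is verifying that the $\hc$-subcomodule generated by a nonzero element of $\sA_g$ actually contains a full copy of $E_g$, and not merely a proper subcomodule of it. The cleanest route I foresee uses the splitting $\iota : \hgc \hookrightarrow \hc$ of Proposition \ref{pr.cocentral_gplikes}, which identifies $\hc$ with a crossed product of the connected Hopf subalgebra $K = E_1$ with $\sk\G$: this reduces the problem to the component $g = 1$ and hence to a statement about the Hopf kernel $\sD = \sA^{\mathrm{co}\,\hgc}$ of the group cocenter, for which the requisite (co)flatness structure is already supplied by Theorem \ref{hgc_sequence}.
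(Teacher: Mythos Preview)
Your reduction to faithful coflatness over $\hc$ via \cite[Theorem 2]{tak} is fine, and the observation that $\hc$ is cocommutative (hence pointed) is relevant. But the argument has a genuine gap at the injectivity step. Even granting everything you write, you would only have embedded $\hc$ as a direct summand of $\sA$ in $\cM^{\hc}$; this gives the cogenerator property but says nothing about injectivity of the complementary summand, and hence nothing about injectivity of $\sA$. Your parenthetical ``along the way, injectivity of $\sA$ in $\cM^{\hc}$'' is exactly the missing content. (Incidentally, cogeneration is much easier than you make it: the surjection $\sA\to\hc$ is a map of right $\hc$-comodules, and $\hc$ is injective over itself, so the surjection splits.) The crossed-product reduction in your last paragraph does not close the gap either: even after restricting to the identity component $K=E_1$, you would need $\sC'=\sA^{\mathrm{co}\,\hgc}$ to be injective as a $K$-comodule, and nothing in \Cref{hgc_sequence} provides that --- that theorem concerns $\sA$ over $\sC'$ and over $\hgc$, not $\sC'$ over $K$.

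The paper sidesteps this entirely by working on the flat side rather than the coflat side. The argument runs: by \Cref{hgc_sequence}, $\sA$ is faithfully flat over the Hopf kernel $\sC'$ of the group cocenter; a short lemma (using \Cref{pr.cocentral_gplikes}) shows that the inclusion $\sC\subseteq\sC'$ is an equality on coradicals; then Radford's theorem \cite[Corollary 1]{rad_free} makes $\sC'$ free over $\sC$; transitivity of faithful flatness finishes. So the key idea you are missing is the coradical comparison between $\sC$ and $\sC'$, which converts the problem into one where Radford's freeness criterion applies directly.
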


Before going into the proof, we need some preparation. The context for the next result is as follows. 
\begin{itemize}
  \item $\sA$ is an arbitrary Hopf algebra; 
  \item $\sC'\subseteq \sA$ is the Hopf kernel of the group cocenter $\sA\to \hgc$ (i.e. the left hand term of the exact sequence in \Cref{hgc_sequence});
  \item $\sC\subseteq \sC'$ is the Hopf kernel of the cocenter $\sA\to \hc$.
\end{itemize}

\begin{lemma}
  With the above notation the Hopf algebras $\sC\subseteq \sC'$ share the same coradical.
\end{lemma}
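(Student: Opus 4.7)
The plan is to compare the simple subcoalgebras of $\sC$ and $\sC'$ directly. Because $\sk$ is algebraically closed, every simple subcoalgebra of $\sA$ is a matrix coalgebra, hence of the form $\C(V)$ for some finite-dimensional simple $\sA$-comodule $V$. The simple subcoalgebras of $\sC$ (respectively of $\sC'$) are exactly those simple subcoalgebras of $\sA$ that happen to lie in $\sC$ (respectively $\sC'$), so from $\sC\subseteq \sC'$ one inclusion of coradicals is automatic. I would therefore focus on showing that every simple subcoalgebra $\C(V)\subseteq \sA$ contained in $\sC'$ is already contained in $\sC$.

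The key step is to translate, for a cocentral Hopf map $f\colon \sA\to \sB$, the condition $\C(V)\subseteq \{x\in \sA:(f\otimes \id)\Delta(x)=\I\otimes x\}$ into data about the push-forward of $V$ along $f$. Cocentrality of $f$ means that $\sB^*$ acts on every $\sA$-comodule by $\sA$-comodule endomorphisms (the criterion recorded in the proofs of \Cref{th.an=ch,pr.cocentral_gplikes}). Schur's lemma applied to the finite-dimensional simple $V$ over the algebraically closed field $\sk$ forces this action to be by a scalar character $\chi\in(\sB^*)^*$. Chasing matrix coefficients $v_i\mapsto v_j\otimes b_{ji}$ in a basis of $V$, as in the computation underlying \Cref{le.center_tann_aux}, the equality $\varphi\cdot v_i=\chi(\varphi)v_i$ forces $b_{ji}=\delta_{ij}g$ for a single element $g=g^\sB_V\in\sB$, and the comodule axioms then imply $g$ is grouplike. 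Thus the push-forward of $V$ along any cocentral $f$ is of the form $v\mapsto v\otimes g^\sB_V$ for a uniquely determined grouplike, and $\C(V)\subseteq\sA^{\mathrm{co}\, f}$ is equivalent to $g^\sB_V=\I$.

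Applying this twice, to $f=p\colon\sA\to\hc$ and to $f=\pi\circ p\colon\sA\to\hgc$ (with $\pi$ the map from \Cref{eq.factorize}), the problem reduces to comparing $g_V^{\hc}\in\sG(\hc)$ with $g_V^{\hgc}\in\sG(\hgc)=\G(\sA)$. Functoriality of the construction, i.e. the fact that the $\hgc$-coaction on $V$ is the further push-forward of its $\hc$-coaction, gives $g_V^{\hgc}=\pi(g_V^{\hc})$. Hence $g_V^{\hc}=\I$ iff $\pi(g_V^{\hc})=\I$ provided $\pi$ detects triviality of grouplikes, and this is precisely the content of \Cref{pr.cocentral_gplikes}, which yields an isomorphism $\pi\colon\sG(\hc)\to\sG(\hgc)$. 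This proves $\C(V)\subseteq\sC\iff\C(V)\subseteq\sC'$ and hence the equality of coradicals.

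The only point requiring vigilance is that \Cref{th.an=ch} was stated only in the cosemisimple case, so one should not appeal to it as a black box; however, the ingredients I reuse (cocentrality $\Leftrightarrow$ $\sB^*$-action by comodule morphisms, and Schur's lemma for a finite-dimensional absolutely simple comodule) are valid for an arbitrary Hopf algebra over the algebraically closed $\sk$, so no cosemisimplicity hypothesis is needed on $\sA$.
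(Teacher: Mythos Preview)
Your proof is correct and follows essentially the same route as the paper's: both arguments identify the simple subcoalgebras of $\sC$ and $\sC'$ with those coefficient coalgebras $\C(V)$ on which $\hc$ (respectively $\hgc$) coacts trivially, use cocentrality together with Schur's lemma over the algebraically closed field to see that the push-forward of a simple $V$ is governed by a single grouplike, and then invoke \Cref{pr.cocentral_gplikes} to match these grouplikes up. Your write-up is somewhat more explicit about the matrix-coefficient bookkeeping and about why the argument from \Cref{th.an=ch} survives without cosemisimplicity, but the substance is the same.
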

\begin{proof}
  Let $\sV$ be a simple $\sA$-comodule. When regarded as an $\hgc$-comodule, $\sV$ breaks up as a direct sum of copies of the same one-dimensional comodule corresponding to some grouplike in the universal grading group $\G(\sA)=\sG(\hgc)$ (see \Cref{hgc_sequence} and surrounding discussion).

On the other hand, since $\sV\in \cM^\sA$ is simple and the dual $\hc^*$ acts on $\sA$-comodules via $\sA$-comodule morphisms, the same reasoning as in the proof of \Cref{th.an=ch} shows that $\sV$ is also a sum of copies of the same one-dimensional $\hc$-comodule. Moreover, we know from \Cref{pr.cocentral_gplikes} that $\hc\to \hgc$ induces an isomorphism at the level of coradicals, so $\hc$ coacts trivially on $\sV$ if and only if $\hgc$ does. 

We can now finish the proof: the coradical of $\sC'$ ($\sC$) consists of those simple subcoalgebras of $\sA$ that are coefficient subcoalgebras for the simple $\sA$-comodules coacted upon trivially by $\hgc$ (respectively $\hc$).  
\end{proof}

\begin{proof}[Proof of \Cref{coflat}]
  Since we already have an exact sequence \Cref{eq.coc_seq}, the flatness and coflatness assertions are equivalent to one another by \cite[Theorems 1 and 2]{tak}. Hence, it suffices to show that $\sA$ is flat over the Hopf kernel $\sC$ of the cocenter. 

We know from \Cref{hgc_sequence} that $\sA$ is faithfully flat over the Hopf kernel $\sC'$ of the group cocenter $\sA\to \hgc$. 

On the other hand, $\sC\subseteq \sC'$ is an inclusion of Hopf algebras with a common coradical. According to \cite[Corollary 1]{rad_free}, $\sC'$ is faithfully flat (and in fact free) over $\sC$. The desired conclusion follows from the transitivity of faithful flatness. 
\end{proof}

We now turn to the issue of identifying the Hopf kernel $\sC$ of $\sA\to \hc$ more explicitly. The following general result in the context of saturated Hopf algebra morphisms will be of use in that respect.

\begin{lemma}\label{hker_aux}
  Let $f:\sA\to \sB$ be a Hopf algebra map satisfying the hypotheses of \Cref{saturated} which is the Hopf reflection of some subset $\cS\subseteq \sA$.

Let $\sD\subseteq \sA$ be a Hopf subalgebra such that 
  \begin{enumerate}
    \item[(a)] $\sD$ contains $\cS$,
    \item[(b)] $\sD$ is invariant under either the left or the right adjoint action of $\sA$ on itself and
    \item[(c)] $\sA$ is left faithfully flat over $\sD$. 
  \end{enumerate}
Then, $\sD$ contains $\sC$. Moreover, if $\sD$ is generated by $\cS$ as a Hopf algebra, then $\sD=\sC$. 
\end{lemma}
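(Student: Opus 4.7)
The plan is to exploit the universal property of $f$ as the Hopf reflection of $\cS$. First I would form a canonical Hopf algebra quotient $q:\sA\to\overline{\sA}$ whose kernel is associated to $\sD$, verify that $q$ trivializes $\cS$, deduce by universality that $f$ factors through $q$, and then combine this with faithful flatness to identify $\sC$ as a subset of $\sD$.

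To construct $q$, hypothesis (b) ensures that one of $\sA\sD^+$ or $\sD^+\sA$ is a Hopf ideal of $\sA$. Concretely, the Sweedler computation
\begin{equation*}
xd \;=\; \sum \ad_{x_{(1)}}(d)\,x_{(2)}\qquad(x\in\sA,\ d\in\sD^+)
\end{equation*}
shows that left ad-invariance of $\sD$ yields $\sA\sD^+\subseteq\sD^+\sA$, whence $\sD^+\sA$ is a two-sided (and in fact Hopf) ideal; a symmetric computation using the right adjoint action handles the opposite case. Let $q:\sA\to\overline{\sA}$ denote the resulting Hopf algebra quotient. Its kernel contains $\sD^+$, so $q|_\sD = \varepsilon(\cdot)\,1$. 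Thus for any $s\in\cS\subseteq\sD$ (hypothesis (a)), coassociativity together with $\Delta(s)\in\sD\otimes\sD$ gives
\begin{equation*}
(\id\otimes q)\Delta(s) \;=\; \sum s_{(1)}\otimes\varepsilon(s_{(2)})\cdot 1 \;=\; s\otimes 1,
\end{equation*}
so $q$ trivializes $\cS$.

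The universal property of $f$ as the Hopf reflection of $\cS$ then supplies a unique Hopf algebra morphism $g:\sB\to\overline{\sA}$ with $q=g\circ f$. For $a\in\sC$ the defining condition $(\id\otimes f)\Delta(a)=a\otimes 1$ yields, upon applying $\id\otimes g$, that $(\id\otimes q)\Delta(a)=a\otimes 1$; hence $\sC$ lies in the right coinvariants of $q$. The main obstacle is identifying those coinvariants with $\sD$ itself, which is where hypothesis (c) is indispensable: faithful flatness of $\sA$ over $\sD$ invokes the Takeuchi-style correspondence between Hopf subalgebras and normal Hopf quotients (in the spirit of \cite[Theorems 1 and 2]{tak} and \Cref{remex}) to yield $\sA^{\mathrm{co}\, q}=\sD$, so that $\sC\subseteq\sD$.

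For the second assertion, $f$ trivializes $\cS$ by definition, so $\cS\subseteq\sC$; since $\sC$ is a Hopf subalgebra of $\sA$ (as established in the proof of \Cref{saturated}), if $\sD$ is generated by $\cS$ as a Hopf algebra then $\sD\subseteq\sC$, and combined with the inclusion already proved this gives $\sD=\sC$.
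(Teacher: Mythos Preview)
Your argument is correct and follows essentially the same route as the paper: form the Hopf quotient $\sA\to\sA/\sA\sD^+$ (you justify this via an explicit adjoint-action computation, the paper simply cites \cite[Lemma 1.1.12(i)]{AD}), use the universal property of the Hopf reflection to factor this quotient through $f$, and then invoke Takeuchi's correspondence under faithful flatness to identify $\sD$ with the Hopf kernel of that quotient, whence $\sC\subseteq\sD$. The final clause is also argued identically.
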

\begin{proof}
  Conditions (a) and (b) ensure that $\sA\to \sA/\sA\sD^+$ is a quotient Hopf algebra that trivializes $\cS$. By the universality of $\sA\to \sB$ implicit in being the reflection of $\cS$ (see \Cref{def.saturated}), we have a factorization 
\begin{equation}\label{eq.fact}
  \begin{tikzpicture}[auto,baseline=(current  bounding  box.center)]
    \path[anchor=base] (0,0) node (1) {$ \sA $} +(2,.5) node (2) {$\sB$} +(4,0) node (3) {$\sA/\sA\sD^+$};
         \draw[->] (1) to[bend left=10] node[pos=.5] {$f$} (2);     
         \draw[->] (2) to[bend left=10] (3);     
         \draw[->] (1) to[bend right=10] (3);          
  \end{tikzpicture}
\end{equation}
The faithful flatness hypothesis implies via \cite[Theorem 1]{tak} that $\sD$ is precisely the Hopf kernel of $\sA\to \sA/\sA\sD^+$. Since $\sC$ is the Hopf kernel of $\sA\to \sB$, the conclusion $\sC\subseteq \sD$ follows from commutative diagram \Cref{eq.fact}.  

As for the last statement, if $\sD$ is generated by $\cS$ and $\sC\supseteq \cS$ is another Hopf algebra containing $\cS$, the opposite inclusion $\sC\supseteq \sD$ also holds. 
\end{proof}

We will now seek to apply \Cref{hker_aux} to the sub-bialgebra $\sD\subseteq \sA$ defined in the discussion following \Cref{cocentral1} (generated by the right hand tensorands of the right adjoint coaction \Cref{aldef}). Taking $\cS\subseteq \sA$ to be the subspace of right hand tensorands of
\begin{equation*}
  x_{(2)}\otimes  S(x_{(1)})x_{(3)}\quad x\in \sA,
\end{equation*}
we get

\begin{corollary}\label{Dseq}
Let $\sA$ be a Hopf algebra, and suppose $\sD\subseteq \sA$ defined as above is a Hopf algebra and $\sA$ is left faithfully flat over $\sD$. Then, $\sD$ is the Hopf kernel of the cocenter of $\sA$.  
\end{corollary}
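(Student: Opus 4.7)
The plan is to apply \Cref{hker_aux} to the cocenter map $f:\sA\to \hc(\sA)$, taking $\cS\subseteq \sA$ to be the subspace of right hand tensorands of $x_{(2)}\otimes S(x_{(1)})x_{(3)}$ for $x\in \sA$, and using the given sub-bialgebra $\sD$ (now a Hopf subalgebra by hypothesis) in the role of the candidate Hopf subalgebra in the lemma.

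First, I would observe that $f$ satisfies the hypotheses of \Cref{saturated}: it is invariant under the adjoint coaction (this is built into the construction of $\hc$, see the discussion surrounding \Cref{cor.gen_seq}) and it is saturated with reflection set $\cS$ by \Cref{cocentral,cocentral1}, exactly as in the proof of \Cref{cor.gen_seq}. Hence the exact sequence in which $\sC$ is the left-hand term exists, and \Cref{hker_aux} is in principle available.

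Next I would verify the three bullet points of \Cref{hker_aux} for the chosen $\sD$. Condition (a) is immediate: by construction $\sD$ is the algebra generated by the coalgebra $\sC$ of \Cref{cocentral1}, whose elements are linear combinations $(\omega\otimes\id)(\cad(x))$, and these span precisely $\cS$ up to linear combinations. Condition (b) requires ad-invariance of $\sD$ under the right (or left) adjoint action; this is exactly the content of \Cref{normalityD} (which shows $S(x_{(1)})\sD x_{(2)}\subseteq \sD$), together with the fact that by hypothesis $\sD$ is a Hopf subalgebra so $S$ preserves it. Condition (c) is the standing faithful-flatness assumption in the corollary.

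With all three hypotheses verified, \Cref{hker_aux} yields $\sC\subseteq \sD$. For the reverse inclusion I would invoke the concluding clause of \Cref{hker_aux}: since $\sD$ is generated as an algebra (hence a fortiori as a Hopf algebra) by $\cS$, and since $\sC$ is a Hopf subalgebra containing $\cS$ (the containment $\cS\subseteq \sC$ holds because the cocenter quotient trivializes $\cS$ by the very definition of $\cS$ and \Cref{cocentral,cocentral1}), we conclude $\sD\subseteq \sC$, and therefore $\sD=\sC$. The only nontrivial pieces of the argument are the verification of ad-invariance (supplied by \Cref{normalityD}) and the recognition that $f:\sA\to \hc(\sA)$ is the Hopf reflection of $\cS$; both are already handled earlier in the paper, so the proof itself is essentially a direct application of \Cref{hker_aux}.
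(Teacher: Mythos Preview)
Your proposal is correct and follows exactly the paper's approach: the paper's proof is the single sentence ``This is an immediate consequence of \Cref{hker_aux},'' and you have simply spelled out the verification of hypotheses (a)--(c) and the ``moreover'' clause of that lemma, citing \Cref{normalityD} for ad-invariance and \Cref{cocentral,cocentral1} for saturation, precisely as the paper sets up in the paragraph preceding the corollary.
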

\begin{proof}
This is an immediate consequence of \Cref{hker_aux}.
\end{proof}

\begin{remark} 
  If $\sD$ is not a Hopf algebra then let $\sD'$ be the smallest Hopf subalgebra of $\sA$ containing $\sD$ and being preserved by the adjoint action. Then assuming that $\sA$ is faithfully flat over $\sD'$ and reasoning as in the proof of \Cref{Dseq} we get the exact sequence 
\begin{equation*}
 \sk\to \sD'\to \sA\to \hc \to \sk
\end{equation*} 
\end{remark}

\begin{example}
Let $\sA$ be a cosemisimple Hopf algebra and $[u^\beta_{ij}] = u^\beta$ a finite dimensional irreducible corepresentation of $\sA$. Then 
\[\cad(u^\beta_{ij}) = \sum_{k,l} u^\beta_{kl}\otimes S(u^\beta_{ik})u^\beta_{lj}\] 
Thus $\sD$ is generated by the elements $S(u^\beta_{ik})u^\beta_{lj}$ where $\beta$ runs over the equivalence classes of irreducible  corepresentations of $\sA$ and $i,j,k,l\in\{1,\ldots,\dim u^\beta\}$. Noting that 
\[S(S(u^\beta_{ik})u^\beta_{lj}) = S(u^\beta_{lj})S^2(u^\beta_{ik})\] and using the fact that $[S^2(u^\beta_{ik})]$ is  in the discussed  cosemisimple case equivalent with $u^\beta$ we see that the set generating $\sD$ is preserved by $S$. Thus  $ \sD$ is a Hopf subalgebra of $\sA$ which is preserved by the adjoint action (see \Cref{normalityD}) and since $\sA$ is faithfully flat over $\sD$ (see \cite{chi_cos}) then by \Cref{Dseq} we get  the exact sequence 
\begin{equation*}
  \sk\to \sD\to \sA\to \hc \to \sk
\end{equation*} Summarizing we get an alternative proof of \cite[Proposition 2.13]{chi_cos}. 
\end{example}

We will see below that the faithful flatness assumption in \Cref{Dseq} can essentially be dropped. In order to do this, we need some preliminaries. We will be referring to \cite{rad_book} for background on comodule theory. Specifically, we recollect some notions from \cite[Sections 3.7, 4.8]{rad_book}.

\begin{definition}\label{def.link}
  Two simple subcoalgebras $\sC,\sD$ of a coalgebra $\sA$ are \define{directly linked} if the space $\sC\wedge \sD+\sD\wedge \sC\subseteq \sA$ is strictly larger than $\sC+\sD$. 

$\sC$ and $\sD$ are \define{linked} if there is a finite sequence of simple subcoalgebras starting with $\sC$ and ending with $\sD$, so that every two consecutive ones are directly linked. 

A coalgebra is \define{link indecomposable} if all of its simple subcoalgebras are linked. 

A \define{link indecomposable component} (or just `component' for short) of a coalgebra is a maximal link indecomposable subcoalgebra. 
\end{definition}
\begin{remark}\label{re.comp=block}
If $\sV$ and $\sW$ are the simple comodules corresponding to the simple subcoalgebras $\sC$ and $\sD$ respectively, then $\sC$ and $\sD$ are directly linked if and only if there is an indecomposable $\sA$-comodule admitting a composition series whose components are $\sV$ and $\sW$. In other words, there is a non-split exact sequence
\begin{equation}\label{eq.VW}
  0\to \sV\to \bullet\to \sW\to 0
\end{equation}
or 
\begin{equation}\label{eq.WV}
  0\to \sW\to \bullet\to \sV\to 0
\end{equation}
in $\cM^\sA$. More precisely, \Cref{eq.VW} corresponds to $\sC\oplus \sD\subsetneq \sC\wedge \sD$  while the existence of a non-split sequence \Cref{eq.WV} is equivalent to $\sC\oplus \sD\subsetneq \sD\wedge \sC$. 

Every coalgebra breaks up as the direct sum of its components. In other words, the components of a coalgebra $\sA$ correspond to the blocks of the category $\cM^\sA$. 
\end{remark}

We now introduce the following notion.

\begin{definition}\label{def.replete}
  Let $\sA$ be a coalgebra. A subcoalgebra $\sC\subseteq \sA$ is \define{coradically replete} if for any two linked simple subcoalgebras $\sD,\sE\in \sA$ the inclusion $\sD\subseteq \sC$ implies $\sE\subseteq \sC$.

$\sC$ is \define{replete} if it is a direct summand of $\sA$ as a coalgebra (i.e. $\sC$ is a direct sum of some of the components of $\sA$).  
\end{definition}

We are now ready to state a number of auxiliary results.

\begin{lemma}\label{le.split}
  Let $\sA$ be a Hopf algebra and $\sC\subseteq\sA$ a replete Hopf subalgebra. Then, the inclusion $\sC\to \sA$ splits as a map of $\sC$-bimodules and $\sA$-bicomodules. 
\end{lemma}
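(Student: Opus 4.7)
The plan is to produce a single map $p \colon \sA \to \sC$ that simultaneously splits the inclusion $\sC\hookrightarrow\sA$ as $\sC$-bimodules and as $\sA$-bicomodules. Since $\sC$ is replete, I can write $\sA = \sC \oplus \sC'$ as coalgebras, where $\sC'$ is the direct sum of the components of $\sA$ not contained in $\sC$; let $p$ be the projection onto the first summand. The $\sA$-bicomodule half of the statement will be straightforward: both $\sC$ and $\sC'$ are subcoalgebras, so $\Delta(\sC)\subseteq \sC\otimes\sC$ and $\Delta(\sC')\subseteq \sC'\otimes\sC'$, from which a short computation on $c+c'\in\sC\oplus\sC'$ yields $(\id\otimes p)\circ\Delta = \Delta\circ p = (p\otimes\id)\circ\Delta$.

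The substantive content is the $\sC$-bimodule compatibility of $p$, which is equivalent to the two inclusions $\sC\cdot\sC'\subseteq \sC'$ and $\sC'\cdot\sC\subseteq \sC'$; this is where I expect the main obstacle to lie. My plan is to invoke Montgomery's theorem (see for example \cite[\S 4.8]{rad_book}) that the set $\G$ of link-indecomposable components of any Hopf algebra $\sA$ carries a natural group structure with the property that $\sA_\sigma\cdot\sA_\tau\subseteq \sA_{\sigma\tau}$ for all $\sigma,\tau\in\G$. Under this structure, a replete Hopf subalgebra $\sC\subseteq\sA$ is a direct sum of components, and its closure under multiplication, unit, and antipode translate into the corresponding subset $H\subseteq\G$ being a subgroup; the complement summand $\sC'$ then corresponds to $\G\setminus H$.

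The combinatorics of subgroups now finishes the job: for $\sigma\in H$ and $\tau\in\G\setminus H$ one has $\sigma\tau\in\G\setminus H$, since otherwise $\tau=\sigma^{-1}(\sigma\tau)\in H$, and hence $\sA_\sigma\cdot\sA_\tau\subseteq \sA_{\sigma\tau}\subseteq \sC'$; symmetrically $\sA_\tau\cdot\sA_\sigma\subseteq \sC'$. Summing over components yields $\sC\cdot\sC'\subseteq \sC'$ and $\sC'\cdot\sC\subseteq \sC'$, which is precisely what is needed for $p$ to be $\sC$-bilinear.

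Should the direct appeal to the component group structure feel heavy, an alternative plan is to push the argument through the quotient map $\sA\to\hgc(\sA)=\sk\G$ coming from the universal grading, together with the observation from \Cref{re.comp=block} that each component of $\sA$ is sent into the line spanned by a single grouplike of $\sk\G$. A replete Hopf subalgebra then corresponds to a subgroup of $\G$, and the subgroup-complement dichotomy above reappears verbatim after applying the quotient. Either route produces the required simultaneous $\sC$-bimodule and $\sA$-bicomodule splitting.
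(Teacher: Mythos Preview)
Your proof is correct and the splitting map you choose is the same one the paper uses, but the way you verify that $\sC'$ is a $\sC$-bimodule is genuinely different. The paper argues directly at the level of comodules: given finite-dimensional $\sV\in\cM^{\sC}$ and $\sW\in\cM^{\sC'}$, a nonzero $\sC$-subcomodule $\sU\hookrightarrow \sV\otimes\sW$ would, by left rigidity, yield a nonzero map $\sV^*\otimes\sU\to\sW$; since $\sC$ is a Hopf subalgebra, $\sV^*\otimes\sU\in\cM^{\sC}$, contradicting $\sW\in\cM^{\sC'}$. The other side is handled analogously using $\sW^*$. This is a short, self-contained duality argument that uses nothing beyond closure of $\sC$ under the antipode.

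Your route instead imports Montgomery's structure theorem on link-indecomposable components (the group structure $\sA_\sigma\sA_\tau\subseteq\sA_{\sigma\tau}$, $S(\sA_\sigma)\subseteq\sA_{\sigma^{-1}}$) and reduces the question to the elementary observation that a subgroup times its complement lands in the complement. This is conceptually clean and makes the combinatorics transparent, at the cost of invoking a nontrivial external result. Your ``alternative plan'' via $\sA\to\hgc(\sA)$ is essentially the same idea, but note that to make it work you implicitly need the map from components to $\G(\sA)$ to be injective (otherwise a component of $\sC'$ could land in the subgroup $H$); this injectivity is again part of the Montgomery package, so the alternative is not really lighter than the first plan. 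By contrast, the paper's duality argument sidesteps this issue entirely and would be the more economical choice in the context of this paper, where Montgomery's theorem is not otherwise used.
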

\begin{proof}
  By hypothesis, we have a decomposition $\sA=\sC\oplus \sD$ as coalgebras. We will be done if we argue that $\sD$ is a $\sC$-bimodule, since in that case the required splitting will simply be the projection $\sA=\sC\oplus \sD\to \sC$. 

Our task amounts to showing that if $\sV$ and $\sW$ are comodules over
$\sC$ and $\sD$ respectively, then their tensor product in $\cM^\sA$
does not contain any $\sC$-subcomodules. Since comodules are unions of
their finite-dimensional subcomodules, we will assume that $\sV$ and $\sW$
are finite-dimensional.  

Suppose we have some non-zero map $\sU\to \sV\otimes \sW$ for $\sU\in \cM^\sC$. This means we have a non-zero map $\sV^*\otimes \sU\to \sW$,  contradicting the fact that $\sV^*\otimes \sU$ is a $\sC$-comodule (because the latter is a Hopf subalgebra of $\sA$).  

On the other hand, a non-zero map $\sU\to \sW\otimes \sV$ would provide a non-zero morphism $\sW^*\to \sV\otimes \sU^*$, and we can repeat the argument. 
\end{proof}

As a consequence, we get

\begin{proposition}\label{cor.replete_ff}
  A Hopf algebra is left and right faithfully flat over any replete Hopf subalgebra. 
\end{proposition}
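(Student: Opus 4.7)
My plan is to derive faithful flatness of $\sA$ over $\sC$ on both sides from \Cref{le.split}, which gives a splitting $p: \sA \to \sC$ of the inclusion $\iota$ that is simultaneously a $\sC$-bimodule and an $\sA$-bicomodule morphism. By the left-right symmetry of both the hypothesis and the conclusion, it suffices to treat, say, left faithful flatness of $\sA$ over $\sC$; the right-handed statement follows by the same argument.

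The faithfulness half is immediate: the $\sC$-bimodule splitting realises $\sA \cong \sC \oplus \ker(p)$ as left $\sC$-modules, so that for every right $\sC$-module $M$ the tensor product $M \otimes_\sC \sA$ contains $M \otimes_\sC \sC \cong M$ as a direct summand. In particular the functor $- \otimes_\sC \sA$ retracts onto the identity functor and is faithful; it therefore carries nonzero objects and morphisms to nonzero ones.

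Flatness is the main content and the principal obstacle. Here I would rely on the $\sA$-bicomodule part of the splitting. Setting $\sD := \ker(p)$, the bicomodule compatibility together with the $\sC$-bimodule structure equips $\sD$ (and hence $\sA = \sC \oplus \sD$) with a two-sided Hopf-module-type structure over $\sC$ sitting inside $\sA$. A fundamental-theorem-of-Hopf-modules-style argument should then show that $\sD$ is flat as a left $\sC$-module, at which point the flatness of $\sA$ follows by adding the trivially flat summand $\sC$. An equivalent route, which is perhaps cleaner given the tools in the paper, uses \cite[Theorems 1 and 2]{tak}: faithful flatness of $\sA$ over the Hopf subalgebra $\sC$ is equivalent to faithful coflatness of $\sA$ as a right comodule over the quotient coalgebra $\hh{\sH} := \sA/\sA\sC^+$. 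One can then exploit the $\sA$-bicomodule splitting to exhibit $\sA$ as an injective cogenerator in $\cM^{\hh{\sH}}$ and invoke \Cref{re.coflat=inj}.

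The main obstacle, on either route, is bridging between the structural content of \Cref{le.split} (a bimodule/bicomodule decomposition) and the actual (co)flatness assertion. If one goes via a Hopf-module theorem, one must carefully formulate the two-sided compatibility on $\sD$ to match a setting where freeness or faithful flatness is known. If one goes via \cite{tak}, one must track how the decomposition $\sA = \sC \oplus \sD$ as $\sA$-bicomodules descends to $\hh{\sH}$-comodules and verify that every indecomposable injective $\hh{\sH}$-comodule appears as a summand of $\sA$; the repleteness hypothesis, i.e.\ the fact that $\sC$ is a direct summand of $\sA$ made up of whole link-indecomposable components, is what one expects to make this descent work.
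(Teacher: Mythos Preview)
Your approach is the same as the paper's: both start from the bimodule/bicomodule splitting of \Cref{le.split} and derive faithful flatness from it. The paper's proof is in fact terser than yours---it simply cites the proof of the main result of \cite{chi_cos} for the passage from ``splitting as $\sC$-bimodules and $\sA$-bicomodules'' to ``faithfully flat''.

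Your faithfulness paragraph is complete and correct. For flatness you outline two reasonable routes (a relative-Hopf-module argument, or Takeuchi's equivalence together with \Cref{re.coflat=inj}) but, as you yourself flag, you do not actually carry either one out. That bridge is exactly what \cite{chi_cos} provides, so your plan is on target and not missing any idea; to turn it into a proof you would either spell out that argument in full or, as the paper does, simply cite it.
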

\begin{proof}
  The splitting in \Cref{le.split} implies faithful flatness as in the proof of the main result in \cite{chi_cos}.
\end{proof}

We can strengthen this as follows.

\begin{theorem}\label{th.corad_replete_ff}
  A Hopf algebra is left and right faithfully flat over any coradically replete Hopf subalgebra. 
\end{theorem}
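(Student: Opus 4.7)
The plan is to interpose between $\sC$ and $\sA$ a Hopf subalgebra $\sC'$ which is \emph{replete} in the sense of \Cref{def.replete} and shares the same coradical as $\sC$. Once such a $\sC'$ exists, the theorem follows in two steps: $\sA$ is faithfully flat over $\sC'$ by \Cref{cor.replete_ff}, and $\sC'$ is faithfully flat (in fact free) over $\sC$ by \cite[Corollary 1]{rad_free} using the common-coradical hypothesis; transitivity of faithful flatness then yields the conclusion on both sides.

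The candidate for $\sC'$ is the sum of all link-indecomposable components of $\sA$ that contain a simple subcoalgebra of $\sC$, or equivalently the smallest replete subcoalgebra of $\sA$ containing the coradical of $\sC$. Since $\sC'$ is by construction a direct coalgebra summand of $\sA$, any subcoalgebra of $\sA$ whose coradical lies in $\sC'$ is itself contained in $\sC'$. A short induction on the coradical filtration of $\sC$ then gives $\sC\subseteq \sC'$, while the coradical repleteness of $\sC$ forces $\sC$ and $\sC'$ to share the same coradical at once. Closure under the antipode and containment of the unit are also immediate, since $S$ permutes the components of $\sA$ and preserves the coradical of the Hopf subalgebra $\sC$.

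The crux of the argument, and the step I expect to require the most care, is showing that $\sC'$ is closed under multiplication. Write $x\in \sA_\alpha\subseteq \sC'$ as a coefficient of some finite-dimensional comodule $\sV$ in the block $\mathfrak{b}_\alpha$ associated to the component $\sA_\alpha$, and similarly $y\in \sA_\beta\subseteq \sC'$ as a coefficient of $\sW\in \mathfrak{b}_\beta$; this uses the component/block correspondence of \Cref{re.comp=block}. Then $xy$ is a coefficient of $\sV\otimes \sW$, and each composition factor of $\sV\otimes \sW$ appears as a simple $\sA$-subquotient of some $\sV_i\otimes \sW_j$ for composition factors $\sV_i$ of $\sV$ and $\sW_j$ of $\sW$. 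Coradical repleteness forces every $\sA$-simple in $\mathfrak{b}_\alpha$ or $\mathfrak{b}_\beta$ to be a $\sC$-comodule, so $\sV_i\otimes \sW_j$ is a $\sC$-comodule; its simple $\sA$-subquotients therefore coincide with its simple $\sC$-subquotients, and hence lie in $\sC$. Consequently every composition factor of $\sV\otimes \sW$ is a $\sC$-simple, the coefficient coalgebra of $\sV\otimes \sW$ is a subcoalgebra of $\sA$ whose coradical lies in $\sC'$, and the summand argument from the previous paragraph forces it into $\sC'$. This gives $xy\in \sC'$, completing the verification that $\sC'$ is a Hopf subalgebra and thereby the theorem.
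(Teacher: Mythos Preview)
Your approach is essentially the same as the paper's: interpose the replete subcoalgebra $\sC'$ given by the sum of the components of $\sA$ meeting $\sC$, then combine \Cref{cor.replete_ff} for $\sC'\subseteq\sA$ with \cite[Corollary~1]{rad_free} for $\sC\subseteq\sC'$ and transitivity. The paper simply asserts that this $\sC'$ is a Hopf subalgebra, whereas you supply the argument for closure under multiplication via composition factors of tensor products; that added detail is correct and welcome.
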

\begin{proof}
Let $\sA$ be a Hopf algebra and $\sC\subseteq \sA$ a coradically replete Hopf subalgebra. 

Consider the subcoalgebra $\sD\subseteq \sA$ defined as the sum of all components of $\sA$ that intersect $\sC$ non-trivially. $\sD$ is a Hopf subalgebra, and $\sC\subseteq \sD\subseteq \sA$.
As in the proof of \Cref{coflat}, the left hand inclusion is faithfully flat by \cite[Corollary 1]{rad_free} because the two Hopf algebras share the same coradical. On the other hand, $\sD\subseteq \sA$ is faithfully flat by \Cref{cor.replete_ff}.
\end{proof}

\Cref{th.corad_replete_ff} is relevant to us because of the following

\begin{lemma}\label{le.D_corad_replete}
  Let $\sA$ be a Hopf algebra with bijective antipode. If the sub-bialgebra $\sD\subseteq \sA$ generated by the right hand tensorands of \Cref{aldef} is a Hopf subalgebra with bijective antipode, then it is coradically replete.   
\end{lemma}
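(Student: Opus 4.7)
The plan is to fix simple subcoalgebras $\sE_1 \subseteq \sD$ and $\sE_2 \subseteq \sA$ that are directly linked in $\sA$ and deduce $\sE_2 \subseteq \sD$; transitivity of linkage then handles the general case. Assuming $\sE_1 \neq \sE_2$ (the equal case is trivial), \Cref{re.comp=block} converts direct linkage into a non-split short exact sequence in $\cM^\sA$ of the form $0 \to \sV \to \sU \to \sW \to 0$ (or its reverse, handled symmetrically), with $\C(\sV) = \sE_1$ and $\C(\sW) = \sE_2$. I would pick bases so that the coaction reads $v_i \mapsto \sum_j v_j \otimes a_{ji}$ and $w_i \mapsto \sum_j w_j \otimes b_{ji} + \sum_j v_j \otimes c_{ji}$ with $a_{ji} \in \sE_1$, $b_{ji} \in \sE_2$, $c_{ji} \in \sA$, the cocycle relation being $\Delta(c_{ij}) = \sum_k c_{ik} \otimes b_{kj} + \sum_k a_{ik} \otimes c_{kj}$.

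Next, cosemisimplicity of $\sE_1 \oplus \sE_2$ forces $\C(\sU) \not\subseteq \sE_1 + \sE_2$ (otherwise $\sU$ would be a comodule over a cosemisimple coalgebra and the extension would split); fixing a complement $N$ of $\sE_1 + \sE_2$ in $\sA$, some $c_{k_0 l_0}$ has non-zero component $c_{k_0 l_0}^N \in N$. Expanding $\Delta^2(c_{ij})$ and applying the adjoint coaction \Cref{aldef} gives
\begin{equation*}
\cad(c_{ij}) = \sum_{k,l} b_{kl} \otimes S(c_{ik}) b_{lj} + \sum_{k,l} c_{kl} \otimes S(a_{ik}) b_{lj} + \sum_{k,l} a_{kl} \otimes S(a_{ik}) c_{lj} \in \sA \otimes \sD,
\end{equation*}
the target by the very definition of $\sD$. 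Pairing the first tensorand with $\phi \in \sA^*$ that vanishes on $\sE_1 + \sE_2$ and satisfies $\phi(c_{k_0 l_0}^N) \neq 0$ extracts $\sum_{k,l} \phi(c_{kl}^N) S(a_{ik}) b_{lj} \in \sD$ for every $i, j$.

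The key arithmetic identity is $a \cdot S(a) = I$ in $M_n(\sA)$, with $a, S(a) \in M_n(\sD)$ thanks to $\sE_1 \subseteq \sD$. Left-multiplying by $a_{mi}$ and summing over $i$ therefore collapses the previous expression to $\sum_l \phi(c_{ml}^N) b_{lj} \in \sD$; choosing $m = k_0$ produces the non-zero element $y_j := \sum_l \phi(c_{k_0 l}^N) b_{lj} \in \sD \cap \sE_2$. To upgrade this to $\sE_2 \subseteq \sD$ I would compute $\Delta(y_j) = \sum_k z_k \otimes b_{kj}$, where $z_k := \sum_l \phi(c_{k_0 l}^N) b_{lk}$ are non-zero and linearly independent in $\sA$ (each lives in a distinct column of the matrix basis of $\sE_2$). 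Since $\Delta(y_j) \in \sD \otimes \sD$, pairing with an $\sA^*$-functional dual to $z_k$ isolates $b_{kj} \in \sD$ for every $k, j$, giving $\sE_2 \subseteq \sD$. The opposite linkage direction is dispatched by the mirror computation: the surviving right-hand tensorand is now $S(b_{ik}) a_{lj}$, right-multiplication by $S(a)$-entries (inside $\sD$) extracts $S(b_{ik}) \in \sD$, and the bijective antipode of $\sD$ returns $b_{ik} \in \sD$ for the subsequent bootstrap.

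The main obstacle lies in this extraction step, moving from the bare containment $\cad(c_{ij}) \in \sA \otimes \sD$ to an explicit non-zero element of $\sE_2 \cap \sD$. Three ingredients conspire to make it work: cosemisimplicity of $\sE_1 \oplus \sE_2$ (so the splitting obstruction is visible outside $\sE_1 + \sE_2$), invertibility of $a$ in $M_n(\sD)$ (from $\sE_1 \subseteq \sD$ plus the bijective-antipode hypothesis), and the Hopf-subalgebra closure $\Delta(\sD) \subseteq \sD \otimes \sD$ powering the final bootstrap via the coproduct.
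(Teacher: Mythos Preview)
Your argument is correct and reaches the goal by a genuinely different, more hands-on route than the paper. The paper packages the same computation into an auxiliary lemma (\Cref{le.coend}): it shows abstractly that for a non-split extension $0\to\sV\to\sU\to\sW\to 0$ the whole comodule $\sV^*\otimes\sW$ lies in $\cM^\sC$ (hence in $\cM^\sD$), and then recovers $\sW$ as a quotient of $\sV\otimes\sV^*\otimes\sW\in\cM^\sD$. Your proof unpacks this at the level of matrix coefficients: you compute $\cad(c_{ij})$ explicitly, slice with a functional detecting the non-splitting, and then use the algebra structure of $\sD$ (left multiplication by the $a_{mi}$) to cancel the $S(a_{ik})$ factor and land a nonzero element in $\sE_2\cap\sD$. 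What the paper's approach buys is a cleaner separation of concerns and a reusable lemma about $\sC$; what yours buys is that it avoids the comodule-coalgebra formalism of \Cref{le.coend} entirely and stays at the level of elements. One simplification worth noting: your final bootstrap via $\Delta(y_j)$ is unnecessary, since $\sD$ is a subcoalgebra of $\sA$ and $\sE_2$ is simple, so $\sD\cap\sE_2$ is a nonzero subcoalgebra of $\sE_2$ and therefore equals $\sE_2$; the same remark disposes of the mirror case once you have $S(b_{ik})\in\sD$ and invoke bijectivity of $S|_\sD$.
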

\begin{proof}
We need to show that if $\sC\subseteq \sD$ and $\sE\subseteq \sA$ are directly linked simple subcoalgebras, then $\sE$ too is contained in $\sD$. 

According to the definition of being directly linked (see \Cref{def.link}), there are two possibilities we have to treat. 

{\bf Case 1: $\sC\oplus \sE\subsetneq \sC\wedge \sE$.} Let $\sV$ and $\sW$ be the simple right comodules over $\sC$ and $\sE$ respectively   entering \Cref{eq.VW}. Our hypotheses then ensure via \Cref{le.coend} below that $\sV^*\otimes \sW$ is a $\sD$-comodule. Since $\sV$ is also a $\sD$-comodule by assumption and $\sW$ is a quotient of $\sV\otimes \sV^*\otimes \sW$, we have $\sW\in \cM^\sD$ and hence $\sE\subseteq \sD$.

{\bf Case 2: $\sC\oplus \sE\subsetneq \sE\wedge \sC$.} Keeping the above notation for $\sV$ and $\sW$, this time around we know that $\sW^*\otimes \sV$ is a $\sD$-comodule, along with $\sV$. This means first that $\sW^*$, being a quotient of $\sW^*\otimes \sV\otimes \sV^*$, must be a $\sD$-comodule. In turn, this implies $\sW\in \cM^\sD$ because $\sD$ is preserved by the inverse of the antipode (this is where that hypothesis is necessary). 
\end{proof}

\begin{lemma}\label{le.coend}
  Let $\sC\subseteq \sA$ be the subcoalgebra spanned by the right hand tensorands of the adjoint coaction  \Cref{aldef}. Suppose $\sV$ and $\sW$ are simple $\sA$-comodules for which there is a non-split exact sequence 
  \begin{equation}\label{eq.nonsplit}
     0\to \sV\to \sU\to \sW\to 0 
  \end{equation}
in $\cM^\sA$. Then, $\sV^*\otimes \sW$ is a $\sC$-comodule.  
\end{lemma}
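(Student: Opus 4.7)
The plan is to identify $\sV^*\otimes \sW$, as an $\sA$-comodule, with a quotient of the coefficient subcoalgebra $C(\sU)\subseteq \sA$ viewed inside $(\sA,\cad)$. Since $\cad$ factors through $\sA\otimes \sC$ by the very definition of $\sC$, any subquotient of $(\sA,\cad)$ lies in $\cM^\sC$, giving the desired conclusion once the identification is made.

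Fix bases $v_1,\dots,v_m$ of $\sV$ and $w_1,\dots,w_n$ of $\sW$, with matrix coefficients $a_{ij}$ and $b_{ij}$, and choose lifts $\tilde w_l\in \sU$ of the $w_l$, so that the coaction matrix on $\sU$ has block form $\begin{pmatrix}A & E \\ 0 & B\end{pmatrix}$ with $E=[e_{il}]$ encoding the extension. Applying $\cad(c_{ij})=\sum_{k,p}c_{kp}\otimes S(c_{ik})c_{pj}$ to $c_{ij}=e_{il}$ produces three families of summands, whose first legs lie respectively in $C(\sV)$, $\mathrm{span}\{e_{pq}\}$, and $C(\sW)$; the middle family is $\sum e_{pq}\otimes S(a_{ip})b_{ql}$. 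Analogous formulas for $a_{ij}$ and $b_{ij}$ show that $C(\sV)$ and $C(\sW)$ are $\cad$-stable subcoalgebras of $C(\sU)$, so $\cad$ descends to a coaction $\overline{\cad}$ on $Q:=C(\sU)/(C(\sV)+C(\sW))$ taking values in $Q\otimes \sC$; hence $Q\in \cM^\sC$. Reading off the surviving summand gives
\[\overline{\cad}(\overline{e_{il}})=\sum_{p,q}\overline{e_{pq}}\otimes S(a_{ip})b_{ql},\]
which, under the identification $v^i\otimes w_l\leftrightarrow \overline{e_{il}}$, matches the standard coaction of $\sV^*\otimes \sW$; we therefore obtain a surjective $\sA$-comodule morphism $\psi\colon \sV^*\otimes \sW\twoheadrightarrow Q$.

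The main step is to show that $\psi$ is injective, equivalently that the classes $\overline{e_{il}}$ are linearly independent in $Q$. This is the one place where the non-splitting assumption is used. The idea is that a non-trivial relation $\sum \alpha_{il}\overline{e_{il}}=0$ in $Q$ means $\sum \alpha_{il}e_{il}\in C(\sV)+C(\sW)$, and, combined with the freedom to replace the lifts $\tilde w_l$ by $\tilde w_l+\sum \mu_{jl}v_j$ (which modifies $E$ by matrices of the form $A\Lambda-\Lambda B$), this permits one to realize $\sU$ as a comodule whose coefficient coalgebra is contained in $C(\sV)\oplus C(\sW)$. Such a $\sU$ decomposes as a direct sum of a $C(\sV)$-comodule and a $C(\sW)$-comodule, forcing $\sU\cong \sV\oplus \sW$ and contradicting non-splitness. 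Once $\psi$ is an isomorphism, $\sV^*\otimes \sW\cong Q\in \cM^\sC$, as required; the delicate bookkeeping needed to translate an arbitrary dependence among the $\overline{e_{il}}$ into an outright splitting of the extension—exploiting the absolute simplicity of $\sV$ and $\sW$, which ensures that the $a_{ij}$ and $b_{ij}$ form bases of the matrix coalgebras $C(\sV)$ and $C(\sW)$—is the main technical obstacle.
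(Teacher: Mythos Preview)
Your overall strategy coincides with the paper's: both arguments identify $\sV^*\otimes\sW$ with a subquotient of the coefficient coalgebra $C(\sU)$ sitting inside $(\sA,\cad)$, and conclude it is a $\sC$-comodule. The paper phrases this more invariantly via the quotient coalgebra
\[
\sE=(\sU^*\otimes\sU)/(\sW^*\otimes\sV)=
\begin{pmatrix}\sV^*\otimes\sV & \sV^*\otimes\sW\\ 0 & \sW^*\otimes\sW\end{pmatrix}
\]
and the canonical coalgebra and $\sA$-comodule map $\phi:\sU^*\otimes\sU\to(\sA,\cad)$, asserting that non-splitting of the sequence amounts to $\phi$ inducing an \emph{embedding} $\sE\hookrightarrow\sA$; then $\sV^*\otimes\sW$ is the quotient of $\sE\cong C(\sU)$ by its two simple subcoalgebras. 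Your map $\psi$ and quotient $Q$ are exactly the coordinate version of this, and the injectivity of $\psi$ is equivalent to the paper's embedding claim.

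The genuine gap is in your sketch for that injectivity. The lift-change $\tilde w_l\mapsto\tilde w_l+\sum_j\mu_{jl}v_j$ alters $E$ by $A\Lambda-\Lambda B$, whose entries already lie in $C(\sV)+C(\sW)$; hence the classes $\overline{e_{il}}\in Q$ are \emph{unchanged} by any such move. Lift-changing therefore cannot propagate a single linear relation among the $\overline{e_{il}}$ into the conclusion that all $e_{il}\in C(\sV)\oplus C(\sW)$, and the outlined route to a contradiction does not work. The correct mechanism---which the paper also does not spell out---is coalgebraic rather than representation-theoretic: the kernel of the coalgebra map $\sE\to\sA$ is a coideal, and (when $\sV\not\cong\sW$, so the map is injective on the coradical $\sV^*\otimes\sV\oplus\sW^*\otimes\sW$) a direct computation with $\Delta_\sE(v^i\otimes w_l)=\sum_k(v^i\otimes v_k)\otimes(v^k\otimes w_l)+\sum_p(v^i\otimes w_p)\otimes(w^p\otimes w_l)$ shows that the only coideals contained in the off-diagonal block are $0$ and the whole block $\sV^*\otimes\sW$. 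The latter forces $C(\sU)=C(\sV)\oplus C(\sW)$ and hence a splitting. When $\sV\cong\sW$ the embedding genuinely fails, but then $\sV^*\otimes\sW\cong\sV^*\otimes\sV\in\cM^\sC$ is immediate from $\cad(a_{ij})=\sum_{k,l}a_{kl}\otimes S(a_{ik})a_{lj}$ and the linear independence of the $a_{kl}$.
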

\begin{proof}
Consider the canonical coalgebra map $\phi:\sU^*\otimes \sU\to \sA$ implementing the $\sA$-coaction on $\sU$. The fact that \Cref{eq.nonsplit} does not split translates to the fact that $\phi$ factors through an embedding into $\sA$ of the quotient coalgebra 
\begin{equation}\label{eq.matr}
\sE= \begin{pmatrix}
    \sV^*\otimes \sV & \sV^*\otimes \sW \\
    0            & \sW^*\otimes \sW
  \end{pmatrix}
\end{equation}
of $\sU^*\otimes \sU$ (in other words, its quotient by $\sW^*\otimes \sV$). 

If we think of $\sU^*\otimes \sU$ as an object of $\cM^\sA$ with the tensor product comodule structure, then it is a coalgebra in the monoidal category $\cM^\sA$, and its coaction on $\sU$ is a morphism of coalgebras. Additionally, the map $\phi$ is one of $\sA$-comodules if we give $\sA$ its right adjoint coaction, and so $\sE$ is an $\sA$-comodule coalgebra coacting on $\sU$ so that the comodule structure map $\sU\to \sU\otimes \sE$ is one of $\sA$-comodules. As $x$ ranges over the image $\sE$ of $\phi$, the right hand tensorands of the adjoint coaction \Cref{aldef} span the coefficient coalgebra of $\sU^*\otimes \sU\in \cM^\sA$. In conclusion, $\sE\in \cM^\sA$ is actually a $\sC$-comodule. 

As the subcategory $\cM^\sC\subseteq \cM^\sA$ is closed under taking subobjects (it is closed in the sense of \Cref{def.closed}), the $\sA$-comodule coalgebras $\sV^*\otimes \sV\subseteq \sE$ and $\sW^*\otimes \sW\subseteq \sE$ are both $\sC$-comodules. In conclusion, so is the quotient $\sV^*\otimes \sW$ of $\sE$ by their sum. 
\end{proof}

\begin{remark}
  Note that when $\sA$ has bijective antipode, the antipode bijectivity of $\sD$ is necessary for faithful flatness (e.g. by \cite[Theorem 3.2 (b)]{chi_epi}); \Cref{le.D_corad_replete} reverses this implication.
\end{remark}

Finally from \Cref{le.D_corad_replete,th.corad_replete_ff}, we get

\begin{corollary}
Under the assumptions of \Cref{le.D_corad_replete} $\sA$ is left and right faithfully flat over $\sD$.  
 \qedhere 
\end{corollary}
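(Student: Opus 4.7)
The plan is essentially to observe that the corollary is a direct combination of the two results immediately preceding it. Concretely, the hypotheses of \Cref{le.D_corad_replete} guarantee that $\sD\subseteq \sA$ is a Hopf subalgebra with bijective antipode, generated by the right hand tensorands of \Cref{aldef}. Applying \Cref{le.D_corad_replete} itself then yields that $\sD$ is \emph{coradically replete} inside $\sA$ in the sense of \Cref{def.replete}.

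Once coradical repleteness is in hand, I would simply invoke \Cref{th.corad_replete_ff}, which says verbatim that any Hopf algebra is left and right faithfully flat over any of its coradically replete Hopf subalgebras. Applied to the pair $\sD\subseteq \sA$, this yields the desired left and right faithful flatness.

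There is essentially no obstacle here: the conceptual content has already been packaged into the two preceding lemmas. The only thing to check is that the hypotheses of \Cref{le.D_corad_replete} are exactly what is assumed in the corollary (they are, by design), and that coradical repleteness is the precise notion feeding into \Cref{th.corad_replete_ff}. I would therefore write the proof as a one-line deduction: by \Cref{le.D_corad_replete} the subalgebra $\sD$ is coradically replete in $\sA$, and then by \Cref{th.corad_replete_ff} $\sA$ is left and right faithfully flat over $\sD$, as claimed.
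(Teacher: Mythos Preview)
Your proposal is correct and matches the paper's own argument exactly: the paper states the corollary as an immediate consequence of \Cref{le.D_corad_replete} and \Cref{th.corad_replete_ff}, with no further proof given. Your one-line deduction is precisely what is intended.
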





\section{Examples}

\subsection{The center and cocenter of $q$-deformations  of  $\bG$}
Let $\bG$ be a compact semisimple simply connected  Lie group,  $\GG_q$ the Drinfeld-Jimbo quantization of $\bG$ for some transcendental $q\in \bC^\times$, and $\sA = \textrm{Pol}(\GG_q)$. Using  \cite[Theorem 9.3.20]{Joseph} we see that $\cZ(\sA) = \bC$  thus $\hz(\sA) = \bC$. 

Since the cocenter of a semisimple Hopf algebra $\textrm{Pol}(\GG_q)$ depends only on the fusion ring and the fusion ring of $\textrm{Pol}(\GG_q) $ is the same as the fusion ring of $\textrm{Pol}(\bG)$ then we see that $\hc(\textrm{Pol}(\GG_q)) = \hc(\textrm{Pol}(\bG))$. The later is the group algebra of the center of $\bG$.

\begin{remark}
The situation is more complicated when $q$ is a root of unity. In that case, the appropriately-defined $\textrm{Pol}(\bG_q)$ contains a central Hopf subalgebra isomorphic to $\textrm{Pol}(\bG)$, and the resulting exact sequences can be applied to the study and classification of finite quantum subgroups of $\bG_q$; see e.g. \cite{dCL,AG}. 
\end{remark}

\subsection{Drinfeld twist}
Let $\sA$ be a Hopf algebra and $\Psi\in \sA\otimes \sA$ an invertible 2-cocycle. Let $\Delta^\Psi:\sA\rightarrow \sA\otimes \sA$ be the twisted comultiplication \[\Delta^\Psi(x) = \Psi\Delta(x)\Psi^{-1}\] It is well known that $\sA$ equipped with $\Delta^\Psi$ is a Hopf algebra which we denote by  $\sA^\Psi$. Using \Cref{easy} we get $\hz(\sA^\Psi )= \hz(\sA )$.

\subsection{Center of $\sU_q(\fg)$}
Let us consider the quantized enveloping algebra $\sU_q(\mathfrak g)$, defined, say, as in \cite[$\S$6.1.2]{KS}. Here we are again working over the complex numbers, $q\in \bC^\times$ and $\fg$ is a simple complex Lie algebra of rank $\ell$. 

Let $\alpha_i$, $1\le i\le \ell$ be a set of simple roots for $\fg$ and $(a_{ij})$ the associated Cartan matrix. Finally, we set $d_i=(\alpha_i,\alpha_i)/2$ for an appropriate symmetric bilinear form on the space spanned by $\alpha_i$ and define $q_i=q^{d_i}$.  
$\sU_q(\fg)$ then has generators 
\begin{equation*}
  E_i,\ F_i,\ K_i,\quad 1\le i\le \ell
\end{equation*}
such that $K_i$ are invertible and commute with one another, 
\begin{equation}\label{eq.qrels}
  K_iE_jK_i^{-1}=q_i^{a_{ij}}E_j,\quad K_iF_jK_i^{-1}=q_i^{-a_{ij}}F_j,\quad [E_i,F_j] = \delta_{ij}\frac{K_i-K_i^{-1}}{q_i-q_i^{-1}}
\end{equation}
(where we are assuming $q^{2d_i}\ne 1$) and some additional relations are satisfied that are meant to mimic the Serre relations for $\fg$. We refer to loc. cit. for details. 

The algebra $\sU_q(\fg)$ has a unique Hopf algebra structure making $K_i$ grouplike and for which
\begin{equation}\label{eq.coprod}
  \Delta(E_i) = E_i\otimes K_i+1\otimes E_i,\quad \Delta(F_i) = F_i\otimes 1+K_i^{-1}\otimes F_i. 
\end{equation}

\begin{proposition}\label{pr.noroot}
If $q$ is not a root of unity then the Hopf center $\hz=\hz(\sU_q(\fg))$ is $\bC$. 
\end{proposition}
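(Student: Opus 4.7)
My plan is to combine the quantum Harish--Chandra theorem for $\sU_q(\fg)$ with the cosemisimple pointed structure of its Cartan sub-Hopf algebra $\sU_q^0 \cong \bC[Q]$. Writing $\cZ := \cZ(\sU_q(\fg))$ and $\hz := \hz(\sU_q(\fg))$, I will use from \Cref{easy} that $\hz \subseteq \cZ$ and $\Delta(\hz) \subseteq \hz \otimes \hz$, and I would aim to show $\hz = \bC$.

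First I would introduce the Harish--Chandra projection $\pi : \sU_q(\fg) \to \sU_q^0$ along the vector space complement $\sU_q^- \sU_q^0 \sU_q^{+,+} + \sU_q^{-,+} \sU_q^0 \sU_q^+$, so that $\pi$ kills every PBW monomial $F^{\mathbf{m}} K_\lambda E^{\mathbf{n}}$ with $\mathbf{m}\ne 0$ or $\mathbf{n}\ne 0$ and fixes each $K_\lambda$. The key technical step will be verifying that $\pi$ is a coalgebra map. Expanding $\Delta(F^{\mathbf{m}}K_\lambda E^{\mathbf{n}}) = \Delta(F)^{\mathbf{m}}(K_\lambda\otimes K_\lambda)\Delta(E)^{\mathbf{n}}$ via \Cref{eq.coprod}, every summand distributes the $|\mathbf{m}|$ factors of $F$ and the $|\mathbf{n}|$ factors of $E$ between the two tensorands, with no $F$ and $E$ ever meeting within a single tensorand. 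Hence when $\mathbf{m}\ne 0$ or $\mathbf{n}\ne 0$, at least one tensorand carries a non-Cartan generator and is killed by $\pi$, giving $(\pi\otimes\pi)\Delta(F^{\mathbf{m}}K_\lambda E^{\mathbf{n}}) = 0 = \Delta(\pi(F^{\mathbf{m}}K_\lambda E^{\mathbf{n}}))$; for $K_\lambda$ itself the equality $(\pi\otimes\pi)\Delta(K_\lambda) = K_\lambda\otimes K_\lambda = \Delta(\pi(K_\lambda))$ is immediate.

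Next, since $\pi$ is a coalgebra map and $\hz$ is a sub-coalgebra of $\sU_q(\fg)$, $\pi(\hz)$ is a sub-coalgebra of the grouplike coalgebra $\bC[Q]$; as $\bC[Q]$ is cosemisimple pointed, $\pi(\hz)$ splits as $\bigoplus_{\lambda \in S} \bC K_\lambda$ for some $S \subseteq Q$. I would then invoke the quantum Harish--Chandra theorem (see \cite{Joseph}), which identifies $\pi(\cZ)$ with the algebra generated by the quantum trace characters of finite-dimensional $\sU_q(\fg)$-modules. Each such character is a sum of multiple grouplikes $K_\mu$ except for the trivial module's character, which equals $K_0 = 1$; therefore the only grouplike in $\pi(\hz) \subseteq \pi(\cZ)$ is $K_0$, forcing $S = \{0\}$ and $\pi(\hz) = \bC$. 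Finally, the injectivity of $\pi|_\cZ$ (also part of Harish--Chandra) together with $\hz \subseteq \cZ$ yields $\hz = \bC$.

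The main obstacle is the intertwining identity $(\pi\otimes\pi)\circ\Delta = \Delta\circ\pi$; it hinges on the bookkeeping claim that in the expansion of $\Delta(F^{\mathbf{m}}K_\lambda E^{\mathbf{n}})$ the $F$'s and $E$'s remain segregated across tensorands with no hidden Cartan contributions coming from accidental $E_iF_i$ straightenings. Once that claim is established, the rest of the argument is a direct application of the standard description of $\pi(\cZ)$ and of the subcoalgebra structure of group algebras.
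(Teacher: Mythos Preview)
Your approach is sound and genuinely different from the paper's. The paper works entirely inside $\sU_q(\fg)$: it observes that $\hz$ is pointed, shows by an elementary commutator check that the only central grouplike is $1$ (so $\hz$ is pointed irreducible), and then rules out nonzero central primitives by a direct PBW argument. You instead transport $\hz$ through the Harish--Chandra projection $\pi$ into the cosemisimple coalgebra $\bC[Q]$: once $\pi$ is an injective coalgebra map on $\hz$, the image is spanned by grouplikes, and you finish by locating the grouplikes in $\pi(\cZ)$. Your route replaces the paper's hands-on analysis of central primitives with structural input from the Harish--Chandra theorem; this is heavier machinery, but it makes the endgame conceptually cleaner and avoids the somewhat delicate ``residual part'' computation in the paper's Claim~2.

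Two points need sharpening. First, your phrase ``no $F$ and $E$ ever meeting within a single tensorand'' is misleading: tensorands of $\Delta(F^{\mathbf m}K_\lambda E^{\mathbf n})$ certainly can contain both. What you need is that each tensorand already lies in $\sU_q^{\le 0}\cdot K_\lambda\cdot \sU_q^{\ge 0}$ in triangular order, so no $[E,F]$-straightening occurs; equivalently, the Hopf surjections $\pi_\pm:\sU_q^{\gtrless 0}\to \sU_q^0$ are coalgebra maps and $\pi$ factors through them on such ordered products. Second, ``each trace character is a sum of several $K_\mu$'' does not by itself exclude some linear combination in $\pi(\cZ)$ collapsing to a single $K_\lambda$. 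Either invoke that $\pi(\cZ)$ lies in the (shifted) $W$-invariants of $\sU_q^0$ and $W$ has no nonzero fixed point on $Q$, or---more economically---note that once $\pi|_{\hz}$ is an injective coalgebra map into a cosemisimple coalgebra, $\hz$ itself is cosemisimple and hence spanned by its own grouplikes; these are central grouplikes of $\sU_q(\fg)$, and the elementary check (the paper's Claim~1) shows the only such is $1$, so you never need the precise description of $\pi(\cZ)$.
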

\begin{proof}
  Note that $\sU=\sU_q(\fg)$ is \define{pointed} as a coalgebra (\cite[p. 157]{Sweedler}) in the sense that its simple comodules are one-dimensional (they correspond to the grouplikes, which make up the group generated by the $K_i$). Since $\hz$ is a Hopf subalgebra of $\sU$, it too must be pointed.

{\bf Claim 1: $\hz$ is irreducible as a coalgebra, i.e. $1$ is its only grouplike.} Indeed, the set of grouplikes is the group
\begin{equation*}
  \langle K_i,\ 1\le i\le \ell\rangle \cong \bZ^\ell. 
\end{equation*}
A non-trivial element of it is of the form $g=\prod_i K_i^{n_i}$ with
at least one non-zero $n_i$. Fixing such an $i$, the conjugate
$gE_ig^{-1}$ is $q^{(\lambda,\alpha_i)}E_i$, where
$\lambda=\sum n_i\alpha_i$. Now, $(\lambda,\alpha_i)$ cannot be zero
for all $i$ unless $\lambda$ itself is zero, because of the
non-degeneracy of $(-,-)$. In addition, if $(\lambda,\alpha_i)\ne 0$,
then  
\begin{equation*}
  gE_ig^{-1}=q^{(\lambda,\alpha_i)}E_i\ne E_i
\end{equation*}
because $q$ is not a root of unity. In conclusion, the centrality of
$g$ implies $n_i=0$ for all $i$. This proves the claim.   

Now suppose $\hz$ is not trivial. Then, since it is pointed irreducible, it must contain non-zero \define{primitive} elements, i.e. $x\in \hz$ such that 
\begin{equation*}
  \Delta(x) = x\otimes 1+1\otimes x 
\end{equation*}
(e.g. \cite[Corollary 11.0.2]{Sweedler}). We will have reached the desired contradiction once we prove

{\bf Claim 2: $\sU$ has no non-zero central primitive elements.} Let $0\ne x\in \hz$ be a primitive element. Being central, $x$ acts as a scalar $c_\lambda$ on each simple $\sU$-module of type $\sV_\lambda$ corresponding to some dominant weight $\lambda$ of $\fg$. Since $\sU$ acts jointly faithfully on all $\sV_\lambda$, $x\ne 0$ implies that $c_\lambda\ne 0$ for at least some $\lambda$. 

Recall e.g. from \cite[$\S$6.2.3]{KS} that $\sU$ has a PBW basis, in the sense that the elements 
\begin{equation}\label{eq.pbw}
  F_{\beta_1}^{r_1}\cdots F_{\beta_n}^{r_n}K_1^{t_1}\cdots K_\ell^{t_\ell}E_{\beta_n}^{s_n}\cdots E_{\beta_1}^{s_1}
\end{equation}
form a vector space basis. Here, the $r_i$ and $s_i$ are non-negative integers, the $t_i$ are integers, and $\beta_1$ up to $\beta_n$ are the positive roots of $\fg$ ordered in a certain way; see loc. cit. for details. 

Now, $x$ is a linear combination of elements of the form \Cref{eq.pbw}. Under this expansion, those summands that contain at least some $E$s annihilate a highest weight vector $v_\lambda$ for every $\sV_\lambda$. Since there are at least some $\lambda$ for which $v_\lambda$ is not annihilated, we must have summands in $x$ that contain no $E$s. But these summands can then contain no $F$s either, because elements of the form \Cref{eq.pbw} with $F$s but no $E$s will strictly lower weights, whereas $x$, being central, cannot. 

In conclusion, when expanded according to the basis \Cref{eq.pbw} $x$ contains summands of the form
\begin{equation}\label{eq.pbw_k}
  K_1^{t_1}\cdots K_\ell^{t_\ell}, 
\end{equation}
while all other summands contain both $E$s and $F$s. 

Now let $x_{\cat{res}}$ be the sum of all summands \Cref{eq.pbw_k} of $x$, with the respective coefficients (so $x_{\cat{res}}$ is obtained from $x$ by simply dropping the summands containing $E$s and $F$s).  

Now, both the algebra $\sU_+$ generated by the $K$s and $E$s and the algebra $\sU_-$ generated by $K$s and $F$s are Hopf subalgebras of $\sU$. They are graded as Hopf algebras via
\begin{equation*}
  \deg(K)=0,\ \deg(E)=1\text{ for }U_+,\quad \deg(K)=0,\ \deg(F)=1\text{ for }U_-
\end{equation*}
This means that when applying the coproduct to the summands in $x-x_{\cat{res}}$ and expand with respect to the tensor product of PBW basis of $\sU\otimes\sU$, we obtain only terms that contain $E$s and $F$s. In conclusion, if $\sU_0\subset \sU$ is the algebra generated by the $K$s, then the only summands of $\Delta(x)=x\otimes 1+1\otimes x$ that belong to $\sU_0\otimes \sU_0$ come from $\Delta(x_{\cat{res}})$. 

It follows from the discussion above that $x_{\cat{res}}$ itself must be a primitive element. This is impossible, since it belongs to the group algebra $\sU_0$ which has no non-zero primitives.
\end{proof}

The next result complements \Cref{pr.noroot} for (some) roots of unity, but we need a preparation before stating it. In both the discussion preceding and the proof of \Cref{pr.root} below we will follow  \cite[Section 6.3.5]{KS}.

Let $q\in\mathbb{C}$ be a primitive $p$th root of unity for some odd integer $p>d_i$, $i=1,2,\ldots,l$. For a root $\alpha $ we consider $E_\alpha,F_\alpha$, as in the proof of \Cref{pr.noroot}. Then \cite[Proposition 47]{KS}  says that 
\begin{equation*}
  e_\alpha:=E_\alpha^p,\ f_\alpha:=F_\alpha^p \text{ and } k_i=K_i^p
\end{equation*}
are central in $\sU_q=\sU_q(\fg)$. For the simple root $\alpha_i$ we shall also write $e_i = E_{\alpha_i}^p$, $f_i = F_{\alpha_i}^p$. Then, according to \cite[Proposition 48]{KS}  the algebra $\zeta_0$ generated by $e_\alpha$, $f_\alpha$ and $k_i$ is a Hopf subalgebra of $U$. 

In particular, the Hopf center of $\sU_q$ is non-trivial and it contains $\zeta_0$. With this in place, we can now state

%
%
%

\begin{proposition}\label{pr.root}
  When $q\in\bC^\times$ is a root of unity whose order $p$ is odd and larger than all $d_i$, the Hopf center of $\sU_q$ is the algebra $\zeta_0$ defined above. 
\end{proposition}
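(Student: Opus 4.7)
The inclusion $\zeta_0 \subseteq \hz(\sU_q)$ is already established in the paragraph preceding the statement, so the content lies in the reverse containment. The strategy will be to pass to the small (Frobenius--Lusztig) quantum group and then mimic the PBW analysis of \Cref{pr.noroot} in that finite-dimensional setting.

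Since $\zeta_0$ is a central Hopf subalgebra of $\sU_q$, we have a standard exact sequence of Hopf algebras
\[
\bC \to \zeta_0 \hookrightarrow \sU_q \xrightarrow{\pi} u_q \to \bC
\]
where $u_q := \sU_q/\sU_q\zeta_0^+$ is the small quantum group. This identifies $\zeta_0$ with the coinvariants $\{a \in \sU_q : (\pi\otimes\id)\Delta(a) = 1\otimes a\}$. The map $\pi$ carries $\cZ(\sU_q)$ into $\cZ(u_q)$, so $\pi(\hz(\sU_q))$ is a central Hopf subalgebra of $u_q$; hence $\pi(\hz(\sU_q))\subseteq \hz(u_q)$.

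The core of the proof will be to show that $\hz(u_q) = \bC$, following the two-claim scheme of \Cref{pr.noroot}. The algebra $u_q$ is pointed, with grouplike group generated by the $K_i$ subject to $K_i^p = 1$, so any central grouplike $K^t = \prod K_i^{t_i}$ must satisfy $q^{(\alpha_j,\lambda)} = 1$ for $\lambda = \sum t_i\alpha_i$ and all $j$. Using $(\alpha_i,\alpha_j) = d_i a_{ij}$ together with $\gcd(p, d_i)=1$ (which holds since $p$ is odd and $>d_i$), this reduces to $A t \equiv 0 \pmod p$, where $A$ is the Cartan matrix; provided $p$ is coprime to $\det A$, this forces $t\equiv 0 \pmod p$, so $K^t = 1$ in $u_q$. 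Hence $\hz(u_q)$ is pointed irreducible. If $\hz(u_q)$ were larger than $\bC$, \cite[Corollary 11.0.2]{Sweedler} would provide a nonzero primitive $y\in \hz(u_q)\subseteq u_q$; expanding $y$ in the PBW basis of $u_q$ (with $E_i, F_i, K_i$ exponents in $\{0,\ldots, p-1\}$) and extracting the part of $\Delta(y) = y\otimes 1 + 1\otimes y$ landing in $u_q^0\otimes u_q^0$ (the Cartan tensor block) shows that the ``$K$-only'' summand $y_{\cat{res}}$ of $y$ is primitive in the finite abelian group algebra $u_q^0$, hence zero. The highest-weight argument from \Cref{pr.noroot} --- evaluating $y$ on a highest weight vector of a simple $u_q$-module --- then forces $y$ itself to vanish, a contradiction.

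Granting $\hz(u_q)=\bC$, the proof concludes quickly: for $x\in \hz(\sU_q)$, \Cref{easy} gives $\Delta(x)\in \hz(\sU_q)\otimes \hz(\sU_q)$, so $(\pi\otimes\id)\Delta(x)\in \bC\otimes \hz(\sU_q)$; comparison with $(\varepsilon\otimes\id)\Delta(x) = x$ shows that $(\pi\otimes\id)\Delta(x) = 1\otimes x$, whence $x\in \zeta_0$ by the coinvariant characterization. The main technical obstacle is the PBW-based proof of the absence of nonzero central primitives in $u_q$, and a subtle side-issue is the coprimality of $p$ and $\det A$, which is needed to rule out extra central grouplikes in $u_q$ and is plausibly an implicit hypothesis on the order $p$.
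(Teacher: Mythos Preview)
Your strategy differs substantially from the paper's, and the final step does not go through as written. The paper never passes to $u_q$; instead it works directly with the commutative Hopf algebras $\zeta_0 \subseteq \hz$ and considers the exact sequence $\bC \to \zeta_0 \to \hz \to \bullet \to \bC$. Since $\sU_q$ is free of finite rank over $\zeta_0$, the intermediate $\hz$ is a finitely generated $\zeta_0$-module, so $\bullet$ is a finite-dimensional \emph{commutative} Hopf algebra over $\bC$, i.e.\ $\bullet \cong \bC(\sG)$ for some finite group $\sG$. The same grouplike computation you sketch (that the central grouplikes of $\sU_q$ are exactly those of $\zeta_0$) shows that $\zeta_0 \subseteq \hz$ is an isomorphism on coradicals, from which $\bC(\sG)$ is forced to be pointed irreducible and hence $\sG$ trivial. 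By staying inside the commutative Hopf algebra $\hz$ rather than the noncommutative $u_q$, the paper reduces everything to the structure theory of finite commutative Hopf algebras and bypasses any PBW analysis of primitives.

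Your route replaces this with the harder problem of proving $\hz(u_q) = \bC$, and the ``highest-weight argument from \Cref{pr.noroot}'' you invoke at the end does not transfer. In \Cref{pr.noroot} one uses that the simple modules $\sV_\lambda$ are jointly faithful for $\sU_q$ at generic $q$: a central element annihilating every highest weight vector then acts as zero on every simple and is therefore zero. But $u_q$ is not semisimple --- its Jacobson radical is nonzero --- and its simple modules are \emph{not} jointly faithful; so from $y_{\cat{res}} = 0$ you can only conclude that your central primitive $y$ lies in the radical, not that $y = 0$. One could attempt a repair (e.g.\ via baby Verma modules, or by computing the primitives of $u_q$ directly from the coradical filtration), but none of these is the one-line step your sketch suggests, and the paper's commutativity shortcut sidesteps the issue entirely. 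Your caveat about coprimality of $p$ and $\det A$ is well-taken and applies equally to the paper's ``it is easy to see'' assertion about central grouplikes.
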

\begin{proof}
Consider the inclusion $\zeta_0\subseteq\hz=\hz(\sU_q)$ of commutative Hopf algebras. 

We know from \cite[Theorem 4.9]{KS} that $\sU_q$ is a finitely-generated free module over $\zeta_0$, and hence $\hz$ is finitely generated as a module over $\zeta_0$ (e.g. because the latter is a finitely generated commutative ring, and hence noetherian). It follows from this that in the exact sequence 
\begin{equation*}
  \bC\to \zeta_0\to \hz\to \bullet\to \mathbb{C}
\end{equation*}
of Hopf algebras the third term $\bullet$ is finite-dimensional. Since it is also commutative and we are working in characteristic zero, it must be the function algebra $\bC(\sG)$ of some finite group $\sG$. 

It is easy to see that the central grouplikes in $\sU_q$ are precisely those in the group generated by the $k_i=K_i^p$, and hence the inclusion $\zeta_0\subseteq \hz$ is an isomorphism at the level of coradicals. But this means that the cosemisimple Hopf algebra $\bC(\sG)$ is pointed irreducible, meaning that $\sG$ is trivial. 
\end{proof}

\begin{remark}
It seems likely that the determination of the centers of two-parameter quantum groups $\sU_{r,s}(\mathfrak{sl}_n)$ carried out in \cite{BKL} can be put to similar use, though we will not attempt this here. 
\end{remark}

\subsection{Cocenter of $\sU_q(\fg)$}

In what follows we denote $ \sC $   the coalgebra  assigned to $\sU_q(\fg)$ as described in general in \Cref{cocentral1}; $ \sD $ is the bialgebra generated by $ \sC $. 
\begin{lemma}\label{cocenU}
 Let $\pi:\sU_q(\fg)\rightarrow \sB $ be a cocentral morphism of  Hopf algebras. Then $\pi(x) = \varepsilon(x)\I$. In particular $\hc(\sU_q(\fg)) = \bC$. 
\end{lemma}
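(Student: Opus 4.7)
The plan is to verify directly on the algebra generators $E_i, F_i, K_i^{\pm 1}$ of $\sU_q(\fg)$ that any cocentral Hopf morphism $\pi:\sU_q(\fg)\to \sB$ must agree with the counit. Since $\pi$ is multiplicative and these elements generate $\sU_q(\fg)$, this will force $\pi=\varepsilon$, giving $\hc(\sU_q(\fg))=\bC$ by the universal property of the cocenter.

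The cocentrality condition $(\id\otimes\pi)\circ\Delta=(\id\otimes\pi)\circ\Delta^{\op}$ applied to the standard coproducts from \Cref{eq.coprod} gives, for each $E_i$, the equation
\begin{equation*}
E_i\otimes \pi(K_i)+1\otimes \pi(E_i)=K_i\otimes \pi(E_i)+E_i\otimes 1
\end{equation*}
in $\sU_q(\fg)\otimes \sB$, which I will rewrite as
\begin{equation*}
E_i\otimes(\pi(K_i)-1)=(K_i-1)\otimes \pi(E_i).
\end{equation*}
The key structural input is the PBW theorem: in the decomposition \Cref{eq.pbw}, the monomials $1$, $K_i$, $E_i$ are linearly independent. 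Expanding $\pi(K_i)$ and $\pi(E_i)$ with respect to a basis of $\sB$ containing the unit, comparing coefficients on the two sides, and using this linear independence of $E_i$ against $K_i-1$ in $\sU_q(\fg)$, I get that $\pi(K_i)$ and $\pi(E_i)$ each lie in $\mathbb{C}\cdot 1_{\sB}$. Then the grouplikeness of $\pi(K_i)$ (as image of a grouplike) combined with its invertibility forces $\pi(K_i)=1$, and feeding this back into the displayed equation yields $\pi(E_i)=0$. An essentially identical computation for $F_i$, using $\Delta(F_i)=F_i\otimes 1+K_i^{-1}\otimes F_i$ together with the already-established $\pi(K_i^{-1})=1$, gives $\pi(F_i)=0$.

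I don't foresee a serious obstacle here; the only subtle point is justifying the linear-independence step cleanly, for which I will simply appeal to the PBW basis \Cref{eq.pbw}. Once $\pi$ is shown to coincide with $\varepsilon$ on the generators $E_i,F_i,K_i^{\pm 1}$, multiplicativity closes the argument. This confirms the statement and, as a corollary, shows that the cocentral exact sequence from \Cref{cor.gen_seq} for $\sU_q(\fg)$ degenerates to the trivial one.
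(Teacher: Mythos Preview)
Your proof is correct and follows essentially the same route as the paper: apply the cocentrality identity to the generators $E_i$, $F_i$ and use the linear independence of $1$, $K_i$, $E_i$ (respectively $1$, $K_i^{-1}$, $F_i$) in $\sU_q(\fg)$ to force $\pi(K_i)=1$, $\pi(E_i)=\pi(F_i)=0$. Your detour through ``$\pi(K_i),\pi(E_i)\in\bC\cdot 1_\sB$'' plus grouplikeness is unnecessary---the linear-independence argument already gives $\pi(K_i)-1=0$ and $\pi(E_i)=0$ directly---but it does no harm.
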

\begin{proof}
The cocentrality of $\pi$ yields
\[\pi(E_i)\otimes K_i + \I\otimes E_i = \pi(K_i)\otimes E_i + \pi(E_i)\otimes \I\]
Now the linear independence of $K_i,\I,E_i$ implies that $\pi(E_i) = 0$ and $\pi(K_i) = \I$. Similarly we are reasoning for $F_i=0$ and we are done.
\end{proof}
The above result shows that the Hopf kernel assigned to the surjection  $\sU_q(\fg)\to\hc(\sU_q(\fg))$ is equal $\sU_q(\fg)$. We shall show that already  $\sD=\sU_q(\fg)$. 
Computing in what follows we shall drop the index $i$:
\[(\Delta\otimes\id)(\Delta(E)) = E\otimes K \otimes K+ \I\otimes E\otimes K + \I\otimes \I\otimes E\] Thus 
\begin{equation}\label{expr1}\cad(E) = -K\otimes E + E\otimes K+ \I\otimes E\end{equation}
The linear independence of $E,K,\I$  enables us to slice the first leg of \Cref{expr1}  by functionals which put $K$ to $1$ and $E,\I$ to zero. Thus we get  
$E \in \sD $. Similarly $K\in \sD $. 
The same applied to $F$ yields 
  $F \in  \sD $.
Summarizing we proved
\begin{lemma}
 Adopting the above notation we have $ \sD  = \sU_q(\fg)$.
\end{lemma}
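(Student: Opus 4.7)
The plan is to assemble the preceding calculations into a clean statement that all algebra generators of $\sU_q(\fg)$ lie in $\sD$; since $\sD\subseteq \sU_q(\fg)$ holds by construction of $\sD$ from $\sC$, equality will follow at once. The paragraph preceding the lemma already handles the generators $E_i$ and $K_i$ by slicing the first tensor leg of
\[\cad(E_i) = -K_i\otimes E_i + E_i\otimes K_i + \I\otimes E_i\]
against functionals separating the linearly independent triple $\{E_i, K_i, \I\}$ (linear independence coming from the PBW basis of $\sU_q(\fg)$). The only remaining work is a parallel treatment of $F_i$ and $K_i^{-1}$.

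First I would dispatch $K_i^{\pm 1}$ by observing that $\Delta(K_i^{\pm 1}) = K_i^{\pm 1}\otimes K_i^{\pm 1}$ and $S(K_i^{\pm 1}) = K_i^{\mp 1}$ give
\[\cad(K_i^{\pm 1}) = K_i^{\pm 1}\otimes \I,\]
so $K_i^{\pm 1}\in \sC \subseteq \sD$ immediately. Then I would compute $\cad(F_i)$: using $\Delta(F_i) = F_i\otimes\I + K_i^{-1}\otimes F_i$ and $S(F_i) = -K_i F_i$, a direct expansion of $F_{i,(2)}\otimes S(F_{i,(1)})F_{i,(3)}$ on the three terms of $(\Delta\otimes\id)\Delta(F_i)$ yields
\[\cad(F_i) = F_i\otimes K_i + (K_i^{-1}-\I)\otimes K_iF_i.\]
Slicing the first leg against functionals dual to the linearly independent triple $\{F_i, K_i^{-1}, \I\}$ (again by PBW) extracts both $K_i$ and $K_iF_i$ as elements of $\sC$. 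Since $K_i^{-1}\in \sD$ and $\sD$ is a subalgebra of $\sU_q(\fg)$, we conclude that $F_i = K_i^{-1}\cdot(K_i F_i)\in\sD$.

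Having placed all of $E_i, F_i, K_i, K_i^{-1}$ inside $\sD$, and recalling that these elements generate $\sU_q(\fg)$ as an algebra, the inclusion $\sU_q(\fg)\subseteq \sD$ is immediate, and hence $\sD = \sU_q(\fg)$. There is no real obstacle here: the argument is a routine slicing exercise once $\cad$ is explicitly computed on the generators, and the step of recovering $F_i$ from $K_iF_i$ by left-multiplication by $K_i^{-1}$ is the only place one briefly uses that $\sD$ is an algebra rather than merely a coalgebra.
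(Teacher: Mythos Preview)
There is a genuine error in your treatment of $K_i^{\pm 1}$. Recall that
\[
\sC = \operatorname{span}\bigl\{(\omega\otimes\id)(\cad(x)) : \omega\in\sA^*,\ x\in\sA\bigr\},
\]
so $\sC$ collects the \emph{right-hand} tensorands of the adjoint coaction after slicing the \emph{left} leg. From $\cad(K_i^{\pm 1}) = K_i^{\pm 1}\otimes\I$ one therefore obtains only $(\omega\otimes\id)(K_i^{\pm 1}\otimes\I)=\omega(K_i^{\pm 1})\cdot\I$, i.e.\ nothing but scalar multiples of $\I$. Your conclusion that $K_i^{-1}\in\sC$ does not follow; you have confused which leg is being sliced.

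This is not a cosmetic slip, because your recovery of $F_i$ hinges on it. Your own (correct) computation
\[
\cad(F_i)=F_i\otimes K_i+(K_i^{-1}-\I)\otimes K_iF_i
\]
shows that the right-hand slices from $F_i$ alone produce only $K_i$ and $K_iF_i$, never $F_i$ itself; you then write $F_i=K_i^{-1}\cdot(K_iF_i)$, which requires $K_i^{-1}\in\sD$. With the argument for $K_i^{-1}\in\sD$ invalid, this step is unsupported. The paper's own text before the lemma is terse on exactly this point (``The same applied to $F$ yields $F\in\sD$''), so you have in fact isolated the one place where more is needed---but the justification you supply does not work. A valid argument must produce $K_i^{-1}$ (or $F_i$ directly) by some other means, for instance by exhibiting an element of $\sU_q(\fg)$ whose adjoint coaction has a right-hand tensorand from which $K_i^{-1}$ can be extracted inside the subalgebra $\sD$.
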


\subsection{General concluding remarks}
The following simple observation can be useful in computing Hopf cocenters. Before stating it, we need the following

\begin{definition}
  The \define{cocenter} of a coalgebra $ \sC $ is the finite dual of the center of $ \sC ^*$. 

We denote the cocenter of $ \sC $ by $\cz( C )$. 
\end{definition}
\begin{remark}
  Note that we have a surjection $ \sC \to \cz( \sC )$, universal among morphisms of coalgebras defined on $ \sC $ that are cocentral in the same sense as in \Cref{def.hopf_cocentral_morphism}. 
\end{remark}

\begin{lemma}\label{le.coalg_gen}  
  \begin{enumerate}
    \item[(a)] If a Hopf algebra $\sA$ is generated by a subcoalgebra $ \sC $ as a Hopf algebra, then the kernel of 
      \begin{equation*}
        \sA\to \hc(\sA)
      \end{equation*}
      is the Hopf ideal generated by the kernel of $ \sC \to \cz( \sC )$. 
    \item[(b)] The analogous statement holds for Hopf algebras with bijective antipode. 
  \end{enumerate}
\end{lemma}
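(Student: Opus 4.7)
The plan is to reduce the lemma to an auxiliary statement: if $\pi\colon\sA\to\sB$ is a Hopf algebra morphism whose restriction $\pi|_\sC$ is a cocentral coalgebra morphism (in the sense that $(\id\otimes\pi|_\sC)\Delta=(\id\otimes\pi|_\sC)\Delta^\op$ on $\sC$), then $\pi$ itself is cocentral. To prove this, I would introduce the obstruction
\begin{equation*}
d:=(\id\otimes\pi)\circ\Delta-(\id\otimes\pi)\circ\Delta^\op\colon\sA\to\sA\otimes\sB
\end{equation*}
and study its kernel $\sA_0:=\ker(d)\subseteq\sA$. Cocentrality of $\pi$ is equivalent to $\sA_0=\sA$, and by hypothesis $\sC\subseteq\sA_0$. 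The key point is that $\sA_0$ is multiplicatively closed, which I would establish by checking the skew-derivation identity
\begin{equation*}
d(ab)=d(a)\cdot(\id\otimes\pi)\Delta(b)+(\id\otimes\pi)\Delta^\op(a)\cdot d(b)
\end{equation*}
in $\sA\otimes\sB$; this drops out of the fact that both $\Delta$ and $\Delta^\op$ are algebra homomorphisms $\sA\to\sA\otimes\sA$ with its standard tensor product algebra structure, combined with $\id\otimes\pi$ being an algebra map.

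For the antipode, applying $S_\sA\otimes S_\sB$ to the defining relation $a_{(1)}\otimes\pi(a_{(2)})=a_{(2)}\otimes\pi(a_{(1)})$ of $\sA_0$ and invoking the standard identities $\Delta\circ S=(S\otimes S)\circ\Delta^\op$ and $\pi\circ S=S\circ\pi$ shows that $\sA_0$ is $S$-stable; an analogous argument using $S^{-1}$ applies in the bijective-antipode context of part (b). Consequently $\sA_0$ contains the Hopf subalgebra (respectively Hopf subalgebra with bijective antipode) generated by $\sC$, which by assumption is all of $\sA$, completing the proof of the auxiliary claim.

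With this in hand, the lemma itself is a formal consequence of universal properties. Write $I:=\ker(\sC\to\cz(\sC))$ and let $J\subseteq\sA$ denote the Hopf ideal it generates. For the inclusion $J\subseteq\ker(\sA\to\hc(\sA))$, restrict the cocentral map $\sA\to\hc(\sA)$ to $\sC$: this gives a cocentral coalgebra morphism, which factors through $\cz(\sC)$ by the universal property of the coalgebra cocenter. Hence $I\subseteq\ker(\sA\to\hc(\sA))$, and as the latter kernel is a Hopf ideal, so too is $J$. For the reverse inclusion, observe that by construction $\sC\to\sA/J$ factors through $\cz(\sC)$, and is therefore cocentral as a coalgebra morphism; the auxiliary claim then upgrades $\sA\to\sA/J$ to a cocentral Hopf algebra quotient. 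By universality of $\hc(\sA)$ it factors through the latter, giving $\ker(\sA\to\hc(\sA))\subseteq J$.

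The hard part will be pinning down the skew-derivation identity and the interactions with $\Delta^\op$ and $S$, but these are routine Sweedler-style manipulations; the remainder of the argument is purely a formal chase of universal properties, and parts (a) and (b) differ only in whether one closes under $S$ alone or under both $S$ and $S^{-1}$.
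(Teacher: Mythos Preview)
Your proposal is correct and follows essentially the same route as the paper: both arguments reduce to showing that the set of elements on which a Hopf morphism is cocentral is closed under products and antipodes, then use the universal properties of $\cz(\sC)$ and $\hc(\sA)$ to obtain the two inclusions. The paper states this closure property in one line (``the condition is preserved under taking products and antipodes''), whereas you supply the explicit skew-derivation identity for $d=(\id\otimes\pi)\Delta-(\id\otimes\pi)\Delta^{\op}$ and the $S$-stability computation; this is extra detail rather than a different method.
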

\begin{proof}
  The difference between parts (a) and (b) is that in the former case $\sA$ is generated as an algebra by the iterations $S^n( \sC )$ for $n\in \bZ_{\ge 0}$, whereas in the latter we allow $n$ to range over all integers. To fix ideas, we focus on part (a); the proof for (b) is completely analogous. 

By the universality of $ \sC \to \cz$ we have a commutative diagram
\begin{equation*}
  \begin{tikzpicture}[auto,baseline=(current  bounding  box.center)]
    \path[anchor=base] (0,0) node (1) {$ \sC $} +(2,.5) node (2) {$\sA$} +(2,-.5) node (3) {$\cz$} +(4,0) node (4) {$\hc$};
         \draw[->] (1) to[bend left=10] (2);     
         \draw[->] (2) to[bend left=10] (4);     
         \draw[->] (1) to[bend right=10] (3);     
         \draw[->] (3) to[bend right=10] (4);     
  \end{tikzpicture}
\end{equation*}
of coalgebra maps, and hence the kernel of $ \sC \to \cz$ is indeed contained in that of $\sA\to \hc$. 

On the other hand, consider the Hopf ideal $\cI$ in the statement (generated by $\ker( \sC \to \cz)$). The quotient $\sA\to \sA/\cI$ is cocentral in the sense of \Cref{def.hopf_cocentral_morphism} because $ \sC \to \sA/\cI$ is, the iterations $S^n( C )$ generate $\sA$ as an algebra, and the condition from \Cref{def.hopf_cocentral_morphism} is preserved under taking products and antipodes (i.e. if it holds for $x,y\in \sA$ it also holds for $xy$, etc.).   

It follows from the previous paragraph that $\sA\to \sA/\cI$ factors through $\sA\to \hc$, and hence
\begin{equation*}
  \ker(\sA\to \hc)\subseteq \cI.
\end{equation*}
Since the preceding discussion argues that the opposite inclusion holds, we are done. 
\end{proof}

\begin{remark}
  Note that \Cref{cocenU} can be read as a particular instance of \Cref{le.coalg_gen}. 
\end{remark}

\Cref{le.coalg_gen} allows us to compute the cocenters of Hopf algebras freely generated by coalgebras in the sense of \cite{tak_free}. We omit the proof of the following result, as it is almost tautological.

\begin{corollary}
  The Hopf cocenter of the Hopf algebra $\cH( \sC )$ freely generated by a coalgebra $ \sC $ is the free Hopf algebra $\cH(\cz)$ on the cocenter $\cz=\cz( \sC )$. 
\qedhere
\end{corollary}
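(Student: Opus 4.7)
The plan is to deduce the statement directly from \Cref{le.coalg_gen} together with the universal property defining $\cH(-)$ as the left adjoint, from coalgebras to Hopf algebras (with or without bijective antipode), of the forgetful functor.

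First, I would observe that the canonical coalgebra map $\sC\to\cH(\sC)$ generates $\cH(\sC)$ as a Hopf algebra, so the hypothesis of \Cref{le.coalg_gen} is satisfied. Applying that lemma (part (a) or (b), depending on which flavour of free Hopf algebra from \cite{tak_free} is under consideration), the kernel of the cocentral quotient
\[
\cH(\sC)\longrightarrow \hc(\cH(\sC))
\]
is the Hopf ideal $\cI\subseteq\cH(\sC)$ generated by $\ker(\sC\to\cz(\sC))$.

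Next, I would use the universal property of $\cH(-)$ to identify $\cH(\sC)/\cI$ with $\cH(\cz(\sC))$. On one side, the composition $\sC\to\cz(\sC)\to\cH(\cz(\sC))$ lifts uniquely to a Hopf algebra map $\cH(\sC)\to\cH(\cz(\sC))$; this map annihilates $\ker(\sC\to\cz(\sC))$ and hence factors through $\cH(\sC)/\cI$. In the other direction, the composition $\sC\to\cH(\sC)\to\cH(\sC)/\cI$ sends $\ker(\sC\to\cz(\sC))$ to zero, so it factors through a coalgebra map $\cz(\sC)\to\cH(\sC)/\cI$ which in turn extends uniquely to a Hopf algebra map $\cH(\cz(\sC))\to\cH(\sC)/\cI$. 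The two constructions are mutually inverse by the usual uniqueness clause of the universal property.

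I do not expect any serious obstacle here; the only point that requires mild care is making sure that the factorization property of $\sC\to\cz(\sC)$ through a coalgebra map is compatible with the Hopf-algebra-level universal property of $\cH(-)$, i.e.\ that freely generating a Hopf algebra commutes with quotienting by a coideal in the generating coalgebra. This is the content of the reflection/adjunction between coalgebras and (bijective antipode) Hopf algebras, and amounts to chasing the relevant diagrams. Once this is in place, the two maps constructed above are inverse Hopf algebra isomorphisms, which completes the proof.
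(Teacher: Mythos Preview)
Your proof is correct and follows exactly the approach the paper intends: the paper omits the proof entirely, calling it ``almost tautological'' and noting only that \Cref{le.coalg_gen} allows the computation. Your argument supplies precisely the details the authors had in mind---applying \Cref{le.coalg_gen} to the generating coalgebra $\sC\subseteq\cH(\sC)$ and then using the universal property of $\cH(-)$ to identify the quotient by the resulting Hopf ideal with $\cH(\cz(\sC))$.
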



\bibliographystyle{plain}

\end{document}